\newcommand{\bbC}{{\mathbb{C}}}
\newcommand{\bbD}{{\mathbb{D}}}
\newcommand{\bbR}{{\mathbb{R}}}
\newcommand{\calC}{{\mathcal{C}}}
\newcommand{\calM}{{\mathcal M}}
\newcommand{\calP}{{\mathcal P}}
\newcommand{\lb}{\label}
\newcommand{\f}{\frac}
\newcommand{\ol}{\overline}
\newcommand{\ti}{\tilde  }
\newcommand{\cvh}{\text{\rm{cvh}}}
\newcommand{\tr}{\text{\rm{Tr}}}
\newcommand{\dist}{\text{\rm{dist}}}
\newcommand{\ran}{\text{\rm{ran}}}
\newcommand{\ess}{\text{\rm{ess}}}
\newcommand{\s}{\text{\rm{s}}}
\newcommand{\supp}{\text{\rm{supp}}}
\newcommand{\intt}{\text{\rm{int}}}
\newcommand{\bi}{\bibitem}
\newcommand{\beq}{\begin{equation}}
\newcommand{\eeq}{\end{equation}}
\newcommand{\ba}{\begin{align}}
\newcommand{\ea}{\end{align}}
\newcommand{\veps}{\varepsilon}
\let\det=\undefined\DeclareMathOperator{\det}{det}
\newcounter{smalllist}
\newenvironment{SL}{\begin{list}{{\rm\roman{smalllist})}}{%
\setlength{\topsep}{0mm}\setlength{\parsep}{0mm}\setlength{\itemsep}{0mm}%
\setlength{\labelwidth}{2em}\setlength{\leftmargin}{2em}\usecounter{smalllist}%
}}{\end{list}}
\DeclareMathOperator*{\wlim}{w-lim}
\numberwithin{equation}{section}
\newtheorem{theorem}{Theorem}[section]
\newtheorem*{p2.1}{Proposition 2.1}
\newtheorem{proposition}[theorem]{Proposition}
\newtheorem{lemma}[theorem]{Lemma}
\theoremstyle{definition}
\theoremstyle{remark}
\newtheorem*{remark}{Remark}
\newtheorem*{remarks}{Remarks}
\newcommand{\abs}[1]{\lvert#1\rvert}
\newcommand{\jap}[1]{\langle #1 \rangle}
\begin{document}
\title{Weak Convergence of CD Kernels and Applications}

\author[B.~Simon]{Barry Simon$^*$}

\thanks{$^3$ Mathematics 253-37, California Institute of Technology, Pasadena, CA 91125.
E-mail: bsimon@caltech.edu. Supported in part by NSF grant DMS-0140592 and
U.S.--Israel Binational Science Foundation (BSF) Grant No.\ 2002068}

\date{July 11, 2007}
\keywords{CD kernel, orthogonal polynomials, weak convergence}
\subjclass[2000]{Primary: 33C45; Secondary: 60B10, 05E35}

\begin{abstract} We prove a general result on equality of the weak limits of the zero
counting measure, $d\nu_n$, of orthogonal polynomials (defined by a measure $d\mu$) and
$\f{1}{n} K_n (x,x)\, d\mu(x)$. By combining this with M\'at\'e--Nevai and Totik upper
bounds on $n\lambda_n(x)$, we prove some general results on $\int_I \f{1}{n} K_n(x,x)\,
d\mu_\s\to 0$ for the singular part of $d\mu$ and $\int_I \abs{\rho_E(x) - \f{w(x)}{n}
K_n(x,x)}\, dx\to 0$, where $\rho_E$ is the density of the equilibrium measure and
$w(x)$ the density of $d\mu$.
\end{abstract}

\maketitle

%%%%%%%%%%%%%%%%%%%%%%%%%%%%%%%
\section{Introduction} \lb{s1}
%%%%%%%%%%%%%%%%%%%%%%%%%%%%%%%

We will discuss here orthogonal polynomials on the real line (OPRL) and unit circle (OPUC)
(see \cite{Szb,GBk,FrB,OPUC1,OPUC2,Rice}). $d\mu$ will denote a measure on $\partial\bbD=\{z\in
\bbC\mid \abs{z}=1\}$ (positive but not necessarily normalized), $\Phi_n(z,d\mu)$ and $\varphi_n
(z,d\mu)$ its monic and normalized orthogonal polynomials, and $\{\alpha_n\}_{n=0}^\infty$
its Verblunsky coefficients determined by $(\Phi_n^*(z)\equiv z^n \,\ol{\Phi_n(1/\bar z)}$)
\begin{equation} \lb{1.1}
\Phi_{n+1}(z) =z\Phi_n(z) - \bar\alpha_n \Phi_n^*(z)
\end{equation}
and
\begin{equation} \lb{1.2}
\|\Phi_n\|_{L^2 (\partial\bbD,d\mu)} =\mu(\partial\bbD)^{1/2} \prod_{j=0}^{n-1}
(1-\abs{\alpha_j}^2)^{1/2}
\end{equation}

$d\mu$ will also denote a measure on $\bbR$ of compact support, $P_n(x,d\mu)$ and $p_n(x,d\mu)$
its monic and normalized orthogonal polynomials. $\{a_n,b_n\}_{n=1}^\infty$ are its Jacobi
parameters defined by
\begin{equation} \lb{1.3}
xp_n(x)=a_{n+1} p_{n+1}(x) + b_n p_n(x) + a_n p_{n-1}(x)
\end{equation}
and
\begin{equation} \lb{1.4}
\|P_n\|_{L^2(\bbR,d\mu)} = \mu(\bbR)^{1/2} (a_1\dots a_n)
\end{equation}

The CD kernel is defined by (some authors sum only to $n-1$)
\begin{align}
K_n(z,w) &=\sum_{j=0}^n \ol{\varphi_n(z)}\, \varphi_n (w) \lb{1.5} \\
K_n (x,y) &=\sum_{j=0}^n p_j(x) p_j(y) \lb{1.6}
\end{align}

The Lebesgue decomposition
\begin{align}
d\mu(e^{i\theta}) &= w(\theta)\, \f{d\theta}{2\pi} + d\mu_\s (e^{i\theta}) \lb{1.6a} \\
d\mu(x) &= w(x)\, dx + d\mu_\s (x) \lb{1.6b}
\end{align}
with $d\mu_\s$ Lebesgue singular will enter.

To model the issues that concern us here, we recall two consequences of the Szeg\H{o}
condition for OPUC, namely,
\begin{equation} \lb{1.6x}
\int \log(w(\theta))\, \f{d\theta}{2\pi} > -\infty
\end{equation}

Here are two central results:

\begin{theorem}[Szeg\H{o}, 1920 \cite{Sz20}]\lb{T1.1} If the Szeg\H{o} condition holds, then
\begin{equation} \lb{1.7}
\lim_{n\to\infty}\int \abs{\varphi_n (e^{i\theta})}^2\, d\mu_\s =0
\end{equation}
\end{theorem}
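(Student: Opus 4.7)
The plan is to reformulate \eqref{1.7} in terms of the Szeg\H{o} function. Under the Szeg\H{o} condition,
\[
D(z) = \exp\left(\frac{1}{4\pi}\int \frac{e^{i\theta}+z}{e^{i\theta}-z}\log w(\theta)\,d\theta\right)
\]
is outer in $H^2(\bbD)$ with $|D(e^{i\theta})|^2 = w(\theta)$ a.e.\ and $D(0)^2 = \exp(\int\log w\,d\theta/(2\pi)) > 0$. Since $|\varphi_n^*(e^{i\theta})| = |\varphi_n(e^{i\theta})|$ on $\partial\bbD$ (immediate from $\varphi_n^*(z) = z^n\overline{\varphi_n(1/\bar z)}$), $\|\varphi_n\|_{L^2(d\mu)}^2 = 1$ together with \eqref{1.6a} rewrites as
\[
1 = \int |\varphi_n^* D|^2 \frac{d\theta}{2\pi} + \int|\varphi_n|^2\,d\mu_\s = \|\varphi_n^* D\|_{H^2}^2 + \int|\varphi_n|^2\,d\mu_\s,
\]
so it suffices to prove $\|\varphi_n^* D\|_{H^2}^2 \to 1$.

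From the display, $\|\varphi_n^* D\|_{H^2}^2 \leq 1$. The value at the origin is $(\varphi_n^* D)(0) = \kappa_n D(0)$, where $\kappa_n = \|\Phi_n\|^{-1}$ is the leading coefficient of $\varphi_n$. Since $|f(0)|^2 \leq \|f\|_{H^2}^2$ for any $f \in H^2$, if I can show
\[
\kappa_n D(0) \longrightarrow 1, \quad\text{i.e.,}\quad \|\Phi_n\|^2 \longrightarrow D(0)^2,
\]
then the squeeze $|\kappa_n D(0)|^2 \le \|\varphi_n^* D\|_{H^2}^2 \le 1$ forces $\|\varphi_n^* D\|_{H^2}^2 \to 1$ and the theorem follows.

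To prove $\|\Phi_n\|^2 \to D(0)^2$, I would use the extremal characterization $\|\Phi_n\|^2 = \min\{\|P\|_{L^2(d\mu)}^2 : P\ \text{polynomial},\ \deg P \leq n,\ P(0)=1\}$, which follows from $\Phi_n^* \perp \{z,\dots,z^n\}$ together with $\Phi_n^*(0) = 1$. For the lower bound, two applications of Jensen suffice: $\int |P|^2 d\mu \geq \int |P|^2 w\,d\theta/(2\pi) \geq \exp\bigl(\int(\log|P|^2+\log w)\,d\theta/(2\pi)\bigr) \geq D(0)^2$, using $\int\log|P|\,d\theta/(2\pi) \geq \log|P(0)| = 0$. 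For the matching upper bound, I would construct near-extremals from $D$ itself: for each $r<1$ the reciprocal $D(0)/D(rz)$ is analytic on $\overline{\bbD}$ with value $1$ at $0$, so its Taylor polynomials $P_n^{(r)}$ of degree $\leq n$ converge uniformly on $\overline{\bbD}$; then $\int |P_n^{(r)}|^2 w\,d\theta/(2\pi) \to \int |D(0)/D(re^{i\theta})|^2 w(\theta)\,d\theta/(2\pi)$ as $n\to\infty$, and the latter tends to $D(0)^2$ as $r\uparrow 1$ by dominated convergence.

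The hardest part is the upper-bound construction, since $1/D$ itself need not lie in $H^2$. The dilation $D(z)\mapsto D(rz)$ circumvents this by producing honest polynomial approximants, but requires care in the order of limits (first $n\to\infty$ for fixed $r$, then $r\uparrow 1$) and a uniform bound so that the singular part contributes nothing to the bound on $\int|P_n^{(r)}|^2\,d\mu$. Once $\|\Phi_n\|^2 \to D(0)^2$ is established, the $H^2$ reproducing-kernel inequality at $z=0$ closes the argument and yields \eqref{1.7}.
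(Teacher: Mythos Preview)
The paper does not prove Theorem~\ref{T1.1}; it is quoted as a classical result of Szeg\H{o} and used only as motivation. So there is no ``paper's proof'' to compare against, and I will assess your argument on its own.

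Your reduction is the classical one and the first half is clean: the identity
\[
1=\|\varphi_n^* D\|_{H^2}^2+\int|\varphi_n|^2\,d\mu_\s,
\qquad
|\kappa_n D(0)|^2\le \|\varphi_n^* D\|_{H^2}^2\le 1,
\]
together with the Jensen lower bound $\|\Phi_n\|^2\ge D(0)^2$, is correct. The whole proof therefore hinges on the matching upper bound $\limsup_n\|\Phi_n\|^2\le D(0)^2$, and this is exactly where your sketch has a real gap.

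Your trial functions $P_n^{(r)}$ are tested in $L^2(d\mu)$, not in $L^2(w\,d\theta/2\pi)$, so after $n\to\infty$ you obtain
\[
\limsup_n\|\Phi_n\|^2\ \le\ D(0)^2\int\frac{w(\theta)}{|D(re^{i\theta})|^2}\,\frac{d\theta}{2\pi}
\ +\ D(0)^2\int\frac{d\mu_\s(\theta)}{|D(re^{i\theta})|^2}\,.
\]
A uniform bound on $D(0)/D(r\,\cdot)$ (which you do have for fixed $r<1$) only shows the second term is \emph{finite}; it does not make it vanish, and as $r\uparrow 1$ the bound $\sup_{|z|\le r}|D(z)|^{-1}$ may blow up. So the sentence ``a uniform bound so that the singular part contributes nothing'' is where the argument breaks. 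What is actually needed here is the Verblunsky--Kolmogorov--Krein refinement of Szeg\H{o}'s theorem, namely that $\lim\|\Phi_n\|^2$ is unaffected by $d\mu_\s$. One standard way to supply this is to multiply your $P_n^{(r)}$ by an additional polynomial factor $Q_m$ with $Q_m(0)=1$, $|Q_m|\le 1$ on $\partial\bbD$, and $|Q_m|$ small on a compact set carrying most of $\mu_\s$ (such $Q_m$ exist because $\mu_\s$ lives on a set of Lebesgue measure zero, so one can build outer functions with modulus close to $1$ off that set and then take polynomial approximants); equivalently, one uses that $1$ lies in the $L^2(d\mu_\s)$--closure of $\spann\{z,z^2,\dots\}$. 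Without some device of this kind the upper bound does not close.

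A smaller point: the claim that $\int|D(0)/D(re^{i\theta})|^2\,w(\theta)\,d\theta/2\pi\to D(0)^2$ ``by dominated convergence'' is not automatic under the bare Szeg\H{o} condition, since $w/|D(r\,\cdot)|^2=\exp\bigl(\log w - P_r*\log w\bigr)$ need not be dominated uniformly in $r$. The limit is true, but you should either justify it (e.g.\ via a maximal--function bound on $P_r*\log w$, or by first replacing $w$ by $w+\varepsilon$ so that $D_\varepsilon^{-1}\in H^\infty$ and then letting $\varepsilon\downarrow 0$) or simply cite Szeg\H{o}'s theorem $\|\Phi_n\|^2\to D(0)^2$ as the known input and keep your (correct) $H^2$ squeeze as the route from it to \eqref{1.7}.
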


\begin{remark} In distinction, if $w=0$, $\int \abs{\varphi_n (e^{i\theta})}^2\, d\mu_\s
\equiv 1$.
\end{remark}

\begin{theorem}[M\'at\'e--Nevai--Totik \cite{MNT91}]\lb{T1.2} If the Szeg\H{o} condition
holds, then for a.e.\ $\theta$,
\begin{equation} \lb{1.8}
w(\theta)\, \f{1}{n+1} \, K_n (e^{i\theta},e^{i\theta}) \to 1
\end{equation}
\end{theorem}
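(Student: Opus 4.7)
The plan is to prove the equivalent formulation $\lim_{n\to\infty} (n+1)\lambda_n(e^{i\theta}) = w(\theta)$ a.e., where $\lambda_n(z) := 1/K_n(z,z)$ is the Christoffel function (well-defined a.e.\ under Szeg\H{o}, since Szeg\H{o} forces $w>0$ a.e.). I would split this into matching $\limsup$ and $\liminf$ bounds.

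For the \emph{upper bound} $\limsup(n+1)\lambda_n(e^{i\theta}) \leq w(\theta)$ a.e., I use the Christoffel variational principle
\[
\lambda_n(e^{i\theta_0}) = \min\bigl\{\|Q\|_{L^2(d\mu)}^2 : \deg Q \leq n,\ Q(e^{i\theta_0}) = 1\bigr\}
\]
with the Fej\'er-type trial polynomial $Q_n(z) := \f{1}{n+1}\sum_{k=0}^n e^{-ik\theta_0} z^k$. Then $Q_n(e^{i\theta_0}) = 1$ and $(n+1)|Q_n(e^{i\phi})|^2 = F_n(\phi - \theta_0)$, the Fej\'er kernel, so
\[
(n+1)\lambda_n(e^{i\theta_0}) \leq \int F_n(\phi - \theta_0)\,d\mu(\phi).
\]
At every Lebesgue point of $w$ at which $\mu_\s$ has local Lebesgue-density zero---i.e.\ for a.e.\ $\theta_0$---the right-hand side tends to $w(\theta_0)$ by standard Fej\'er theory. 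This direction does not use the Szeg\H{o} condition.

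For the \emph{lower bound} $\liminf(n+1)\lambda_n(e^{i\theta}) \geq w(\theta)$ a.e., the Szeg\H{o} condition is essential. I would use Bernstein--Szeg\H{o} approximation: given $\veps > 0$, the log-integrability of $w$ lets one choose a polynomial $\Pi_\veps$ with no zeros in $\overline{\bbD}$ such that $w_\veps(\theta) := |\Pi_\veps(e^{i\theta})|^{-2}$ satisfies $\int|\log(w/w_\veps)|\f{d\theta}{2\pi} < \veps$. For the Bernstein--Szeg\H{o} measure $d\mu_\veps := w_\veps \f{d\theta}{2\pi}$ the orthonormal polynomials stabilize past degree $\deg \Pi_\veps$, and a direct computation gives $(n+1)\lambda_n^{(\veps)}(e^{i\theta}) \to w_\veps(\theta)$ uniformly. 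To transfer this bound to $d\mu$, I would use Szeg\H{o} factorization: a polynomial approximation of the outer function with boundary modulus $\sqrt{w_\veps/w}$, multiplied into a near-optimal trial polynomial for $\mu_\veps$, gives a trial polynomial for $\mu$ whose $L^2(d\mu)$-norm equals $(1+o_\veps(1))\lambda_n^{(\veps)}$. Taking $\veps \to 0$ along a diagonal sequence yields the a.e.\ lower bound.

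The main obstacle is the lower bound. Although Szeg\H{o}'s theorem gives $\varphi_n^*(e^{i\theta})D(e^{i\theta})\to 1$ in $L^2(d\theta/2\pi)$, $L^2$ convergence does not directly yield pointwise a.e.\ information about the Ces\`aro sum $(n+1)^{-1}\sum_{j=0}^n|\varphi_j(e^{i\theta})|^2$. The Bernstein--Szeg\H{o} approximation reduces to a class where everything is explicit, but preserving \emph{pointwise}, rather than merely integrated, control through the passage $\veps\to 0$ is where log-integrability---the exact content of the Szeg\H{o} condition---does its essential work.
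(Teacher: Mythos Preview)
The paper does not give its own proof of Theorem~\ref{T1.2}; it is quoted from \cite{MNT91} as background.  What the paper does supply is the one-sided M\'at\'e--Nevai bound (Theorem~\ref{T1.6}), together with the Fej\'er-kernel computation \eqref{3.1}--\eqref{3.2} that establishes it.  Your upper-bound argument is exactly that computation and is correct; it needs no Szeg\H{o} hypothesis, as you say.

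Your lower-bound sketch, however, has the comparison running in the wrong direction.  You propose to take a near-optimal trial polynomial $R$ for the Bernstein--Szeg\H{o} measure $d\mu_\veps$, multiply by a polynomial $P$ with $\abs{P}^2\approx w_\veps/w$, and obtain a trial polynomial $RP$ for $d\mu$.  But plugging any trial polynomial into the variational principle for $d\mu$ yields an \emph{upper} bound on $\lambda_n(d\mu)$, not a lower one: what you would get is $\lambda_{n+\deg P}(d\mu)\lesssim \lambda_n^{(\veps)}\approx w_\veps/ (n+1)$, i.e.\ another route to the $\limsup$ inequality you already proved.

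To bound $\lambda_n(d\mu)$ from below you must start from its extremal polynomial $Q$ and produce a competitor for the \emph{other} Christoffel problem.  Concretely, with $S$ a polynomial approximant to the outer function with boundary modulus $\sqrt{w/w_\veps}$ (note the reciprocal of your choice), one has
\[
\lambda_n(d\mu)\;\geq\;\int \abs{Q}^2 w\,\tfrac{d\theta}{2\pi}
\;\approx\;\int \abs{QS}^2 w_\veps\,\tfrac{d\theta}{2\pi}
\;\geq\;\abs{S(e^{i\theta_0})}^2\,\lambda_{n+\deg S}^{(\veps)}(e^{i\theta_0}),
\]
and then $\abs{S(e^{i\theta_0})}^2\lambda_{n+\deg S}^{(\veps)}\approx \tfrac{w(\theta_0)}{w_\veps(\theta_0)}\cdot\tfrac{w_\veps(\theta_0)}{n}$.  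Turning the ``$\approx$'' into honest a.e.\ statements is precisely the delicate part of \cite{MNT91}, and it is where the Szeg\H{o} condition (existence of the outer factor for $w/w_\veps$, control of the error in the polynomial approximation at a.e.\ boundary point via maximal functions) is genuinely used.  Your final paragraph correctly identifies this as the hard step, but the mechanism you describe before it would not get you there.
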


They state the result in an equivalent form involving the Christoffel function
\begin{equation} \lb{1.9a}
\lambda_n(z_0) =\inf \biggl\{\int \abs{Q_n(e^{i\theta})}^2\, d\mu(\theta)\biggm|
\deg Q_n\leq n; \, Q_n(z_0)=1\biggr\}
\end{equation}
The minimizer is
\begin{align}
Q_n (e^{i\theta}) &= \f{K_n (z_0, e^{i\theta})}{K_n(z_0,z_0)} \lb{1.9} \\
\lambda_n(z_0) &= K_n (z_0, z_0)^{-1} \lb{1.10}
\end{align}
and since \eqref{1.6x} implies $w(\theta)>0$ for a.e.\ $\theta$, \eqref{1.8} is equivalent to
\begin{equation} \lb{1.11}
n \lambda_{n-1} (e^{i\theta}) \to w(\theta)
\end{equation}

It is also known under a global $w(\theta) >0$ condition that there are similar results that
come from what is called Rakhmanov theory (see \cite[Ch.~9]{OPUC2} and references therein):

\begin{theorem}\lb{T1.3} If $w(\theta) >0$ for a.e.\ $\theta$, then
\begin{SL}
\item[{\rm{(i)}}]  \eqref{1.7} holds.
\item[{\rm{(ii)}}]
\begin{equation} \lb{1.9x}
\lim_{n\to\infty} \int \bigl|w(\theta) \abs{\varphi_n (e^{i\theta})}^2-1\bigr|\, \f{d\theta}{2\pi} =0
\end{equation}
\end{SL}
\end{theorem}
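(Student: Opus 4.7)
The plan is to follow the classical Rakhmanov--Khrushchev circle of ideas. The two main external inputs are: (a) \emph{Rakhmanov's theorem}, which says that $w(\theta)>0$ a.e.\ implies the Verblunsky coefficients satisfy $\alpha_n\to 0$; and (b) \emph{Khrushchev's theorem}, which says that $\alpha_n\to 0$ is equivalent to the weak-$*$ convergence of the probability measures $|\varphi_n(e^{i\theta})|^2\,d\mu$ to $\frac{d\theta}{2\pi}$. Under the hypothesis of Theorem~\ref{T1.3}, (a) applies, and then (b) yields $\int f\,|\varphi_n|^2\,d\mu\to\int f\,\frac{d\theta}{2\pi}$ for every $f\in C(\partial\bbD)$.

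For (i), I use that $d\mu_\s$ is concentrated on some Borel set $S\subset\partial\bbD$ with $|S|=0$. Given $\varepsilon>0$, outer regularity of Lebesgue measure supplies an open $U\supset S$ expressible as a countable disjoint union of open arcs with $|U|<\varepsilon$; the boundary $\partial U$ is at most countable and therefore Lebesgue-null. Portmanteau's theorem applied to the weak convergence from (b) then gives $\int_U|\varphi_n|^2\,d\mu\to |U|/(2\pi)<\varepsilon/(2\pi)$. Since $|S|=0$ forces $\int_S|\varphi_n|^2\,w\,\frac{d\theta}{2\pi}=0$, one has
\[
\int|\varphi_n|^2\,d\mu_\s=\int_S|\varphi_n|^2\,d\mu\le \int_U|\varphi_n|^2\,d\mu,
\]
so $\limsup_n\int|\varphi_n|^2\,d\mu_\s\le\varepsilon/(2\pi)$, and letting $\varepsilon\to 0$ yields \eqref{1.7}.

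For (ii), combining (i) with the normalization $\int|\varphi_n|^2\,d\mu=1$ gives $\int w|\varphi_n|^2\,\frac{d\theta}{2\pi}\to 1$; together with (b), the densities $w|\varphi_n|^2$ converge weakly (in the measure-theoretic sense) to $1$ with equal limiting integral. The main obstacle is upgrading this to $L^1$-norm convergence, since in general weak convergence plus mass equality does not imply norm convergence (witness $1+\sin(n\theta)$, which tends to $1$ weakly with equal integrals but not in $L^1$). The extra structure needed is that the oscillations of $|\varphi_n(e^{i\theta})|^2$ are controlled, through the Szeg\H{o} recursion \eqref{1.1}, by the tail $\{\alpha_n,\alpha_{n+1},\ldots\}$ of the Verblunsky sequence, so $\alpha_n\to 0$ forces those oscillations to vanish asymptotically. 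Concretely, one shows $w|\varphi_n|^2\to 1$ in Lebesgue measure, and then Scheff\'e's lemma together with the total-mass convergence upgrades this to \eqref{1.9x}. Implementing this ``oscillation control'' rigorously is the main technical hurdle of the argument.
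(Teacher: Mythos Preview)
The paper does not contain its own proof of Theorem~\ref{T1.3}; it is quoted in the introduction as a known result, with (i) attributed to Rakhmanov \cite{Rakh77,Rakh83} and (ii) to M\'at\'e--Nevai--Totik \cite{MNT88}. The paper's new methods (Theorem~\ref{T1.5} combined with the M\'at\'e--Nevai upper bound, Theorem~\ref{T1.6}) yield only the \emph{Ces\`aro} analogs: Theorem~\ref{T3.3} gives $\frac{1}{n+1}\sum_{j=0}^n\int|\varphi_j|^2\,d\mu_\s\to 0$, and Theorem~\ref{T1.4} gives the $L^1$ statement with $\frac{1}{n+1}K_n$ in place of $|\varphi_n|^2$. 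So there is no proof in the paper against which to compare your proposal directly; the paper's techniques are deliberately weaker on this point (averaging in $n$) in exchange for dropping the global a.c.\ hypothesis.

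On your sketch itself: part (i) is fine. Once you invoke Rakhmanov's theorem and Khrushchev's equivalence, the Portmanteau/open-set argument you give correctly extracts \eqref{1.7}. Note, though, that Rakhmanov plus Khrushchev is already a good deal deeper than the statement being proved, so this is more a deduction from stronger known facts than an independent proof.

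Part (ii) has a genuine gap. You correctly observe that weak convergence $w|\varphi_n|^2\,\frac{d\theta}{2\pi}\overset{w}{\to}\frac{d\theta}{2\pi}$ together with convergence of total mass does \emph{not} by itself imply $L^1$ convergence (your $1+\sin(n\theta)$ example is exactly the obstruction), and you propose to close the gap by first establishing $w|\varphi_n|^2\to 1$ in measure and then invoking a Scheff\'e-type argument. But you do not prove the convergence in measure; you only say that the recursion \eqref{1.1} and $\alpha_n\to 0$ ``should'' damp oscillations, and you explicitly label this ``the main technical hurdle.'' That hurdle \emph{is} the content of \eqref{1.9x}: the work in \cite{MNT88} lies precisely in controlling $|\varphi_n|^2$ (equivalently $|\varphi_n^*|^2$) pointwise/in measure, not in the soft measure-theoretic wrap-up. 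As written, your argument for (ii) identifies the right reduction but supplies none of the analysis needed to carry it out, so it is incomplete.
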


\begin{remark} (i) is due to Rakhmanov \cite{Rakh77,Rakh83}; (ii) is due to M\'at\'e, Nevai, and Totik
\cite{MNT88}.
\end{remark}

Theorems~\ref{T1.1} and \ref{T1.2} are known to hold under a local Szeg\H{o} condition together
with regularity in the sense of Stahl--Totik \cite{StT} (see also \cite{EqMC} and below).

One of our goals here is to prove the first results of these genres with neither a local Szeg\H{o}
condition nor a global a.c.\ condition. While we focus on the OPRL case, for comparison with
the above theorems, here are our new results for OPUC:

\begin{theorem}\lb{T1.4} If $I$ is an interval on $\partial\bbD$ so that
\begin{SL}
\item[{\rm{(a)}}] $w(\theta)>1$ a.e.\ on $I$
\item[{\rm{(b)}}] $d\mu$ is regular for $\partial\bbD$, that is,
\begin{equation} \lb{1.13}
(\rho_1 \dots \rho_n)^{1/n} \to 1
\end{equation}
\end{SL}
then
\begin{alignat}{2}
& \text{\rm{(i)}} \qquad && \int_I \f{1}{n+1} \, K_n (e^{i\theta},e^{i\theta})\,
d\mu_\s (\theta) \to 0 \lb{1.11x} \\
& \text{\rm{(ii)}} \qquad && \int_I \, \biggl| 1-w(\theta)\, \f{1}{n+1} K_n
(e^{i\theta},e^{i\theta})\biggr|\, \f{d\theta}{2\pi} \to 0 \lb{1.12x}
\end{alignat}
\end{theorem}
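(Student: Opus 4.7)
The plan is to combine three ingredients: (a) the paper's general result identifying the weak-$*$ limit points of $\frac{1}{n+1}K_n(e^{i\theta},e^{i\theta})\,d\mu(\theta)$ with those of the zero counting measure $d\nu_n$; (b) the classical fact that under regularity \eqref{1.13} one has $d\nu_n \to \frac{d\theta}{2\pi}$ weakly on $\partial\bbD$, since the equilibrium measure of the unit circle is normalized Lebesgue measure; and (c) the Totik extension of the M\'at\'e--Nevai upper bound on Christoffel functions, which under \eqref{1.13} plus local a.e.\ positivity of $w$ on $I$ yields
\[
\liminf_{n\to\infty}\, w(\theta)\,\tfrac{1}{n+1}K_n(e^{i\theta},e^{i\theta})\geq 1\quad\text{for a.e.\ }\theta\in I.
\]
Together, (a) and (b) give $\frac{1}{n+1}K_n\,d\mu \to \frac{d\theta}{2\pi}$ weakly on $\partial\bbD$.

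The argument is then a pinching. Squeezing $\chi_I$ between continuous functions supported in slight enlargements of $I$, the weak convergence yields
\[
\limsup_{n\to\infty}\int_I \tfrac{1}{n+1}K_n(e^{i\theta},e^{i\theta})\,d\mu(\theta) \leq \tfrac{|I|}{2\pi}.
\]
Using the Lebesgue decomposition \eqref{1.6a}, this integral splits as $\int_I w(\theta)\tfrac{K_n}{n+1}\,\tfrac{d\theta}{2\pi} + \int_I \tfrac{K_n}{n+1}\,d\mu_\s$. By (c) and Fatou's lemma, the first summand has $\liminf \geq |I|/(2\pi)$. Since the second is non-negative, both must be pinched, to $|I|/(2\pi)$ and $0$ respectively; the latter is exactly (i), and the former yields $\int_I w\,\tfrac{K_n}{n+1}\,\tfrac{d\theta}{2\pi}\to |I|/(2\pi)$.

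For (ii), set $g_n(\theta) := w(\theta)\,\tfrac{1}{n+1}K_n(e^{i\theta},e^{i\theta})$. Then $g_n\geq 0$, $\liminf_n g_n\geq 1$ a.e.\ on $I$, and $\int_I g_n\,\tfrac{d\theta}{2\pi}\to |I|/(2\pi)$. A Scheff\'e-style rewriting
\[
\int_I |g_n-1|\,\tfrac{d\theta}{2\pi} = \int_I (g_n-1)\,\tfrac{d\theta}{2\pi} + 2\int_I (1-g_n)_+\,\tfrac{d\theta}{2\pi}
\]
shows both terms tend to zero: the first by the convergence of integrals just obtained, the second by dominated convergence since $0\leq (1-g_n)_+\leq 1$ and $(1-g_n)_+\to 0$ a.e.\ on $I$.

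The main obstacle is ingredient (a), the equality of weak limits of $\frac{1}{n+1}K_n\,d\mu$ and $d\nu_n$, which is the central new theorem of the paper; once it is available, the remaining pinching and Scheff\'e steps are essentially routine. A secondary subtlety is that for OPUC the zeros of $\varphi_n$ lie inside $\bbD$ rather than on $\partial\bbD$, so (a) must be interpreted so as to yield weak convergence of $\frac{1}{n+1}K_n\,d\mu$ on $\partial\bbD$ itself, presumably via balayage of $d\nu_n$ to the circle or through an OPUC-adapted version of the main theorem.
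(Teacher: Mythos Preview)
Your proof is correct and is essentially the paper's own argument. The paper phrases the pinching via subsequential weak limits $d\nu_1$, $d\nu_2$ of the a.c.\ and singular parts, shows $d\nu_1\restriction I\geq \frac{d\theta}{2\pi}$ from the M\'at\'e--Nevai bound plus Fatou and $d\nu_1+d\nu_2=\frac{d\theta}{2\pi}$ from regularity plus Theorem~\ref{T1.5} (in its OPUC form, Theorem~\ref{T2.7}, which handles exactly the balayage/POPUC issue you flag), then deduces (ii) from the vanishing of $\int_I[\,\cdot\,]_-$ just as in your Scheff\'e step; your direct $\limsup/\liminf$ pinching is the same mechanism without the subsequence bookkeeping.
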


\begin{remarks} 1. We do not have pointwise convergence \eqref{1.8}, but do have
``one-half" of it, namely, for $e^{i\theta}\in I$,
\begin{equation} \lb{1.13x}
\liminf\, w(\theta)\, \f{1}{n+1}\, K_n (e^{i\theta},e^{i\theta})\to 1
\end{equation}

\smallskip
2. One associates existence of limits of $\abs{\varphi_n}^2\, d\mu$ with Rakhmanov
which we only expect when $d\mu$ has support on all of $\partial\bbD$ or a single
interval of $\bbR$. At best, with multiple intervals, one expects almost periodicity
of $\abs{\varphi_n}^2\, d\mu$ rather than existence of the limits. For this reason,
the Ces\`aro averages of Theorem~\ref{T1.4} are quite natural.
\end{remarks}

There is nothing sacred about $\partial\bbD$---regularity is defined for any set, and for
both OPRL and OPUC, all we need is regularity plus $w(\theta) >0$ on an interval. We also
have interesting new bounds on the density of zeros when regularity fails---these generalize
a theorem of Totik--Ullman \cite{TU}.

These new theorems do not involve tweaking the methods used to prove Theorems~\ref{T1.1}--\ref{T1.3},
but a genuinely new technique (plus one general method of M\'at\'e--Nevai used in the proof of
Theorem~\ref{T1.2}). Our point here is as much to emphasize this new technique as to prove
the results. The new technique is the following:

\begin{theorem}\lb{T1.5} Let $d\mu$ be a measure on $\bbR$ with bounded support or a
measure on $\partial\bbD$. For $\bbR$, let $d\nu_n$ be the normalized zero counting measure
for the OPRL and for $\partial\bbD$ for the zeros of the paraorthogonal polynomials
{\rm{(}}POPUC{\rm{)}}. Let $n(j)$ be a subsequence $n(1) < n(2) < \dots$. Then
\begin{equation} \lb{1.14}
d\nu_{n(j)+1} \overset{w}{\longrightarrow} d\nu_\infty \Longleftrightarrow \f{1}{n(j)+1}\,
K_{n(j)}(x,x)\, d\mu(x) \overset{w}{\longrightarrow} d\nu_\infty
\end{equation}
\end{theorem}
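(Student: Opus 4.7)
The plan is to interpret each of the two measures in \eqref{1.14} as a normalized trace of an operator attached to $d\mu$, and then exploit bandedness of that operator to show that the two traces differ by $\Oh(1)$ as $n\to\infty$, so that after division by $n+1$ the difference vanishes. I will spell out the OPRL case; the POPUC case is parallel with the CMV matrix $\calC$ in place of the Jacobi matrix.

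Let $J$ be the Jacobi matrix on $\ell^2(\bbZ_{\ge 0})$ whose spectral measure with cyclic vector $\delta_0$ is $d\mu$. The unitary identification $L^2(d\mu)\cong\ell^2$ sending $p_k\mapsto\delta_k$ turns multiplication by $x$ into $J$, so for any continuous $g$,
\[
\int g(x)\, p_k(x)^2\, d\mu(x) = \langle \delta_k, g(J)\delta_k\rangle.
\]
Summing on $k=0,\dots,n$ and using \eqref{1.6} gives
\[
\int g(x)\, K_n(x,x)\, d\mu(x) = \tr\bigl(P_{n+1}\, g(J)\, P_{n+1}\bigr),
\]
where $P_{n+1}$ is the orthogonal projection onto $\spann\{\delta_0,\dots,\delta_n\}$. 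On the other side, the zeros of $p_{n+1}$ are exactly the eigenvalues of the truncated Jacobi matrix $J_{n+1}\equiv P_{n+1}JP_{n+1}$, so
\[
\int g\, d\nu_{n+1} = \f{1}{n+1}\,\tr\bigl(g(J_{n+1})\bigr).
\]
It therefore suffices to show that for each fixed polynomial $g$, $\tr(P_{n+1}g(J)P_{n+1})-\tr(g(J_{n+1}))=\Oh(1)$ as $n\to\infty$.

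By linearity it suffices to treat $g(x)=x^m$, and here bandedness does the work. Since $J$ is tridiagonal, $J^m$ has bandwidth $m$, so the matrix entries of $P_{n+1}J^m P_{n+1}$ and $J_{n+1}^m = (P_{n+1}JP_{n+1})^m$ agree at every $(i,j)$ with $\max(i,j)\le n-m$. On the diagonal this leaves at most $m$ indices where the two matrices can disagree, and since $\supp(d\mu)$ is compact, every entry of either matrix is bounded in modulus by $\|J\|^m$. The trace gap is thus $\Oh(m\|J\|^m)=\Oh(1)$ for fixed $m$, and after division by $n+1$ contributes $\oh(1)$.

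Both $d\nu_{n+1}$ and $\f{1}{n+1}K_n(x,x)\,d\mu$ are probability measures supported in the compact set $\cvh(\supp d\mu)$, so Weierstrass approximation upgrades agreement on polynomials to agreement on every continuous $g$, giving the claimed equivalence \eqref{1.14} at the level of weak limits along any subsequence. For POPUC, one replaces $J$ by $\calC$ and uses the standard identification of the zeros of the POPUC of degree $n+1$ as eigenvalues of a rank-one unitary perturbation of $P_{n+1}\calC P_{n+1}$ determined by the boundary parameter; the pentadiagonal bandedness of $\calC$ again produces an $\Oh(1)$ trace gap. The step I would expect to demand the most care is this POPUC identification, since the unitary (rather than self-adjoint) framework forces a boundary correction that must be absorbed into the bandedness estimate, but this is routine in the OPUC literature.
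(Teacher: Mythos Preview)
Your proof is correct and is essentially the paper's own argument, phrased in the $\ell^2$/Jacobi-matrix model rather than the $L^2(d\mu)$/multiplication-operator model. The paper's Proposition~2.1 bounds the rank of $Q_nM_z^\ell Q_n-(Q_nM_zQ_n)^\ell$ by $\ell$ using that $M_z$ raises polynomial degree by one, which is exactly your tridiagonality/bandedness observation after conjugating by the unitary $p_k\mapsto\delta_k$; the resulting trace estimate and the Weierstrass step match the paper's Proposition~2.3 and Theorem~2.4, and your POPUC remark parallels Proposition~2.6.
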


\begin{remarks} 1. We will discuss POPUC and related objects on $\partial\bbD$ in Section~\ref{s2}.

\smallskip
2. As we will discuss, $d\nu_{n(j)}$ and $d\nu_{n(j)+1}$ have the same limits.
\end{remarks}

In one sense, this result is more than twenty-five years old! It is a restatement of the invariance
of the density of states under change of boundary conditions proven in this context first by
Avron--Simon \cite{AvS83}. But this invariance is certainly not usually stated in these terms.
We also note that for OPUC, I noted this result in my book (see \cite[Thm.~8.2]{OPUC2}) but did
not appreciate its importance.

We note that for OPUC, limits of $\f{1}{n+1} K_n (e^{i\theta},e^{i\theta})\, d\mu(\theta)$ have
been studied by Golinskii--Khrushchev \cite{KhGo} without explicitly noting the connection to
CD kernels. Their interesting results are limited to the case of OPUC and mainly to situations
where the support is all of $\partial\bbD$.

By itself, Theorem~\ref{T1.4} is interesting (e.g., it could be used to streamline the proof
of a slightly weaker version of Corollary~2 of Totik \cite{Tot}), but it is really powerful
when used with the following collection of results:

\begin{theorem}[M\'at\'e--Nevai \cite{MN}]\lb{T1.6} For any measure on $\partial\bbD$,
\begin{equation} \lb{1.15}
\limsup\, n\lambda_{n-1} (e^{i\theta})\leq w(\theta)
\end{equation}
\end{theorem}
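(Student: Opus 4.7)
The plan is to prove the upper bound by exhibiting an explicit near-optimal test polynomial in the variational problem \eqref{1.9a}. Given $\theta_0$, set
\[
Q_n(z) = \f{1}{n+1}\sum_{k=0}^{n} (\bar{z_0}z)^{k},
\]
so that $Q_n$ has degree $\le n$, $Q_n(z_0) = 1$, and a direct summation-of-geometric-series computation gives
\[
\abs{Q_n(e^{i\theta})}^2 = \f{1}{(n+1)^2}\left(\f{\sin\bigl((n+1)(\theta-\theta_0)/2\bigr)}{\sin((\theta-\theta_0)/2)}\right)^2 = \f{F_n(\theta-\theta_0)}{n+1},
\]
where $F_n$ is the classical Fej\'er kernel. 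Plugging into \eqref{1.9a} and \eqref{1.10} yields
\[
(n+1)\lambda_n(e^{i\theta_0}) \le \int F_n(\theta-\theta_0)\, d\mu(\theta) = (F_n * d\mu)(\theta_0).
\]

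The remainder of the argument is purely harmonic-analytic: show that $(F_n * d\mu)(\theta_0) \to w(\theta_0)$ for a.e.\ $\theta_0$. Splitting $d\mu = w(\theta)\,d\theta/(2\pi) + d\mu_\s$, the absolutely continuous piece is handled by the standard Lebesgue differentiation theorem for approximate identities: since $F_n \ge 0$, $\int F_n\,d\theta/(2\pi) = 1$, and $F_n$ is appropriately concentrated at $0$ (dominated by $C\min(n+1, 1/((n+1)\theta^2))$), $F_n * (w\,d\theta/(2\pi)) \to w$ at every Lebesgue point of $w$, hence a.e.

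The main obstacle is the singular part, which requires the classical fact that $(F_n * d\mu_\s)(\theta_0) \to 0$ for Lebesgue-a.e.\ $\theta_0$, despite $d\mu_\s$ being concentrated on a Lebesgue-null set. This follows from the pointwise bound on $F_n$ above together with a standard maximal-function argument: the nontangential Poisson maximal function of $d\mu_\s$ is finite Lebesgue-a.e., the symmetric derivative $\lim_{h\to 0} \mu_\s([\theta_0-h,\theta_0+h])/(2h)$ vanishes Lebesgue-a.e.\ by the Besicovitch/Lebesgue differentiation theorem for mutually singular measures, and these combine to force $(F_n * d\mu_\s)(\theta_0) \to 0$ a.e. Putting the two pieces together,
\[
\limsup_{n\to\infty} (n+1)\lambda_n(e^{i\theta_0}) \le w(\theta_0) \quad \text{for a.e.\ } \theta_0,
\]
and a trivial reindexing $n \mapsto n-1$ delivers \eqref{1.15}. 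No assumption on $d\mu$ beyond being a finite measure on $\partial\bbD$ is used; in particular, no regularity hypothesis is needed, which is what makes this bound a flexible input when combined with Theorem~\ref{T1.5}.
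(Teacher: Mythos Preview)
Your proof is correct and follows essentially the same approach the paper sketches in Section~\ref{s3}: use the trial polynomial $Q_n(z)=\frac{1}{n+1}\sum_{k=0}^n (\bar z_0 z)^k$ in the variational characterization \eqref{1.9a}, recognize $(n+1)\abs{Q_n}^2$ as the Fej\'er kernel, and invoke the standard maximal-function a.e.\ convergence of $F_n * d\mu$ to $w$. You have simply spelled out in more detail the harmonic-analysis step the paper summarizes in one line.
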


M\'at\'e--Nevai--Totik \cite{MNT91} noted that this applies to OPRL on $[-1,1]$ using the
Szeg\H{o} mapping, and Totik, in a brilliant paper \cite{Tot}, shows how to extend it to any
measure of bounded support, $E$, on $\bbR$ so long as $E$ contains an interval:

\begin{theorem}[Totik \cite{Tot}]\lb{T1.7} Let $I\subset E\subset\bbR$ where $I=(a,b)$ is an
interval and $E$ is compact. Suppose $d\mu$ is a measure with support contained in $E$ so that
\begin{equation} \lb{1.16}
d\mu(x)=w(x)\, dx + d\mu_\s (x)
\end{equation}
Let $d\rho_E(x)$ be the potential theoretic equilibrium measure for $E$ {\rm{(}}so it is known
$d\rho_E\restriction I=\rho_E(x)\, dx$ for some strictly positive, real analytic weight
$\rho_E${\rm{)}}. Then for Lebesgue a.e.\ $x\in I$,
\begin{equation} \lb{1.17}
\limsup \, n\lambda_{n-1}(x) \leq \f{w(x)}{\rho_E(x)}
\end{equation}
\end{theorem}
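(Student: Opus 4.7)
The plan is Totik's polynomial inverse-image reduction to $E=[-1,1]$. On $[-1,1]$, Theorem~\ref{T1.6} combined with the Szeg\H{o} mapping yields the analogue of \eqref{1.17}: for any finite Borel measure $d\nu$ on $[-1,1]$ with a.c.\ density $W$, $\limsup n\lambda_{n-1}(y_0,d\nu)\le W(y_0)/\rho_{[-1,1]}(y_0)=\pi\sqrt{1-y_0^2}\,W(y_0)$ for Lebesgue a.e.\ $y_0\in(-1,1)$. The task is to transfer this bound to $x_0\in I\subset E$ via a polynomial map.

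\textbf{Pull-back setup.} Given $\varepsilon>0$, invoke Totik's approximation theorem (a classical potential-theoretic statement) to produce a polynomial $T_N$ of some degree $N=N(\varepsilon)$ such that $E\subset E_N:=T_N^{-1}([-1,1])$, $E_N$ is a finite disjoint union of closed intervals, and the equilibrium density $\rho_{E_N}(x)=\abs{T_N'(x)}/(N\pi\sqrt{1-T_N(x)^2})$ satisfies $\rho_{E_N}(x_0)\ge(1-\varepsilon)\rho_E(x_0)$ for a.e.\ $x_0\in I$. Fix such an $x_0$ that is also a Lebesgue point of $w$. Set $y_0=T_N(x_0)$ and let $d\mu_*=(T_N)_*d\mu$ on $[-1,1]$, whose a.c.\ density at $y_0$ (for a.e.\ such $y_0$) is $w_*(y_0)=\sum_{T_N(\xi)=y_0}w(\xi)/\abs{T_N'(\xi)}$.

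\textbf{Trial function and bound.} Choose a near-minimizer $R_n$ of $\lambda_n(y_0,d\mu_*)$ with $R_n(y_0)=1$. The naive trial polynomial $R_n\circ T_N$ has degree $\le nN$ and satisfies $\int\abs{R_n\circ T_N}^2\,d\mu=\int\abs{R_n}^2\,d\mu_*$, so each preimage $\xi$ of $y_0$ contributes a term to the bound. To isolate the $\xi=x_0$ contribution, multiply by a \emph{fast-decreasing polynomial} $A_\ell$ of degree $\ell=o(n)$ (Ivanov--Totik construction) satisfying $A_\ell(x_0)=1$, $\abs{A_\ell}\le 1+\varepsilon$ on a small neighborhood of $x_0$, and $\abs{A_\ell}\le\varepsilon$ on small neighborhoods of the other preimages $\xi_i\ne x_0$. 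Then $Q:=A_\ell\cdot(R_n\circ T_N)$ is admissible for $\lambda_{nN+\ell}(x_0,d\mu)$; splitting $\int\abs{Q}^2d\mu$ into small neighborhoods of the $\xi_i$'s, performing the local change of variables $y=T_N(x)$ around $x_0$, and using the Lebesgue-point property of $w$ at $x_0$ gives $\int\abs{Q}^2\,d\mu\le(1+o_n(1))(1+\varepsilon)^2\,(w(x_0)/\abs{T_N'(x_0)})\int\abs{R_n}^2\,dy$. Applying the MN bound on $[-1,1]$, multiplying by $nN+\ell\sim nN$, and then sending $n\to\infty$ followed by $\varepsilon\to 0$, yields $\limsup m\lambda_{m-1}(x_0,d\mu)\le N\pi\sqrt{1-y_0^2}\,w(x_0)/\abs{T_N'(x_0)}=w(x_0)/\rho_{E_N}(x_0)\le w(x_0)/((1-\varepsilon)\rho_E(x_0))$, which is \eqref{1.17}.

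\textbf{Main obstacle.} The most delicate ingredient is the trial-function construction: one needs the localizer $A_\ell$ to have sub-linear degree $\ell=o(n)$ while still suppressing all $N-1$ extraneous preimages enough that their contribution is negligible relative to the main term $w(x_0)/\rho_{E_N}(x_0)$, and the change-of-variables error at $x_0$ must be of lower order than the main term (which is exactly the a.e.\ Lebesgue-point hypothesis). Comparable in difficulty is Totik's density theorem that $E$ can be approximated from above by sets of the form $T_N^{-1}([-1,1])$ with $\rho_{E_N}\to\rho_E$ a.e.\ on $I$; this is the potential-theoretic input driving the passage from $[-1,1]$ to general $E$, and it is what confines the result to $x\in I$ (where $\rho_E$ is real-analytic) rather than general $x\in E$.
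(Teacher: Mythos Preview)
Your proposal is correct and follows Totik's original polynomial inverse-image method from \cite{Tot}. The paper does not give its own proof of Theorem~\ref{T1.7}---it is cited as Totik's result---but in Section~\ref{s5} it sketches the strategy and explicitly notes two routes: Totik's polynomial mapping (which you follow) and an alternative due to Simon \cite{2exts} using CD kernels of measures in the periodic isospectral torus together with Floquet theory. The paper then implements the latter route in Section~\ref{s8} for the OPUC analogue (Theorem~\ref{T8.1}).

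The difference is in the trial functions. You pull back a near-minimizer from $[-1,1]$ via $T_N$ and kill the extraneous preimages with a fast-decreasing localizer $A_\ell$. The paper's preferred approach instead approximates $E$ by finite unions of intervals with rational harmonic measures (Bogatyr\"ev--Peherstorfer--Totik), and on such a set uses as trial polynomial the normalized CD kernel $K_m(x,x_0)/K_m(x_0,x_0)$ of a measure in the associated periodic isospectral torus; Floquet theory gives the sine-kernel asymptotics \eqref{8.9} directly, with no pull-back and no separate localizer needed. Your route is closer to the cited source and is entirely self-contained once the Ivanov--Totik fast-decreasing polynomials and the approximation $\rho_{E_N}\to\rho_E$ are granted; the paper's route avoids the localizer machinery but requires the isospectral-torus/Floquet input, and fits its broader theme of exploiting CD-kernel asymptotics.
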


\begin{remarks} 1. Totik concentrated on the deeper and more subtle fact that if there is a
local Szeg\H{o} condition on $I$ and $\mu$ is regular for $E$, then the limit exists and equals
$w(x)/\rho_E(x)$ for a.e.\ $x$. But along the way he proved \eqref{1.17}.

\smallskip
2. We will actually prove equality in \eqref{1.17} (for $\limsup$) when $E=\supp(d\mu)$ and
$\mu$ is regular.

\smallskip
3. Later (see Section~8), we will prove the analog of Theorem~\ref{T1.7} for closed subsets
of $\partial\bbD$.
\end{remarks}

Some of our results assume regularity of $\mu$ so we briefly summarize the main results from
that theory due largely to Stahl--Totik \cite{StT} in their book; see my recent paper
\cite{EqMC} for an overview.

A measure $\mu$ on $\bbR$ is called {\it regular\/} if $E=\supp(d\mu)$ is compact and
\begin{equation} \lb{1.18}
\lim_{n\to\infty}\, (a_1\dots a_n)^{1/n} =C(E)
\end{equation}
where $C(E)$ is the (logarithmic) capacity of $E$. For OPUC, \eqref{1.18} is replaced by
\begin{equation} \lb{1.19}
\lim_{n\to\infty}\, (\rho_1 \dots \rho_n)^{1/n} =C(E)
\end{equation}
Some insight is gained if one knows in both cases, for any $\mu$ (supported on $E$ compact
in $\bbR$ or $\partial\bbD$), the $\limsup$ is bounded above by $C(E)$. For this paper,
regularity is important because of (this is from \cite[Sect.~2.2]{StT} or
\cite[Thm.~2.5]{EqMC}).

\begin{theorem}\lb{T1.8} If $d\mu$ is regular, then with $d\nu_n$, the density of zeros of
the OPRL or of the POPUC, we have
\begin{equation} \lb{1.20}
d\nu_n \overset{w}{\longrightarrow} d\rho_E
\end{equation}
the equilibrium measure for $E$. Conversely, if \eqref{1.20} holds, either $\mu$ is regular
or $\mu$ is supported on a set of capacity zero.
\end{theorem}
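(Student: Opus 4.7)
The plan is a potential-theoretic argument centered on the identity
\[
\tfrac{1}{n}\log|P_n(x)| \;=\; -U^{\nu_n}(x), \qquad U^{\sigma}(x) := \int\log\tfrac{1}{|x-y|}\,d\sigma(y),
\]
which comes from the monic factorization of $P_n$ over its zeros (with the POPUC analog on $\partial\bbD$), together with the characterization of $d\rho_E$ as the unique probability measure on $E$ with $U^{\rho_E}(x) = -\log C(E)$ quasi-everywhere on $\supp(\rho_E)$ (Frostman's theorem), equivalently the unique minimizer of the logarithmic energy $I(\sigma) = \iint \log\tfrac{1}{|x-y|}\,d\sigma(x)\,d\sigma(y)$ among probability measures on $E$, with minimum value $-\log C(E)$.

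Forward direction. Assume $(a_1\cdots a_n)^{1/n}\to C(E)$. Since zeros of $P_n$ lie in $\cvh(E)$ (and POPUC zeros lie on $\partial\bbD$), $\{d\nu_n\}$ is tight; let $d\nu$ be a weak subsequential limit. From $\|P_n\|_{L^2(d\mu)}^{1/n}\to C(E)$ and the $L^2$-minimality of $P_n$, comparison against a near-Chebyshev polynomial (whose $L^\infty(E)$-norm is asymptotic to $C(E)^n$) promotes this to $\|P_n\|_{L^\infty(E)}^{1/n}\to C(E)$. Combined with the Bernstein--Walsh lemma, one then obtains the Widom-type off-$E$ asymptotic
\[
\tfrac{1}{n}\log|P_n(z)| \;\to\; g_E(z,\infty)+\log C(E)
\]
locally uniformly for $z\notin E$, where $g_E(\cdot,\infty)$ is the Green's function for the unbounded component of $E^c$ with pole at $\infty$. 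Hence $U^\nu(z) = -g_E(z,\infty)-\log C(E) = U^{\rho_E}(z)$ for $z\notin E$; combined with $\supp\nu\subseteq E$ (the growth at $\infty$ pins all mass onto $E$) and standard uniqueness of a finite measure by its logarithmic potential off its support (via distributional Laplacian), conclude $\nu=\rho_E$.

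Converse. Assume $d\nu_n\to d\rho_E$ and $\supp(\mu)$ has positive capacity. The upper bound $\limsup(a_1\cdots a_n)^{1/n}\leq C(E)$ follows by comparing $P_n$ in $L^2(d\mu)$ against any monic polynomial with $L^\infty(E)$-norm comparable to $C(E)^n$. For the matching lower bound, the principle of descent gives
\[
\liminf_n U^{\nu_n}(x) \;\geq\; U^{\rho_E}(x) \;=\; -\log C(E) \quad \text{q.e.\ on } E,
\]
i.e., $\liminf_n \tfrac{1}{n}\log|P_n(x)| \geq \log C(E)$ off a polar exceptional set. Since $\supp(\mu)$ has positive capacity, there is a Borel $A\subseteq\supp(\mu)$ with $\mu(A)>0$ on which this bound holds; Egorov's theorem yields $A'\subseteq A$ with $\mu(A')>0$ where the convergence is essentially uniform, so for each $\epsilon>0$, $|P_n(x)|\geq(C(E)-\epsilon)^n$ on $A'$ for all large $n$. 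Then $\|P_n\|_{L^2(d\mu)}^{1/n}\geq(C(E)-\epsilon)\mu(A')^{1/(2n)}\to C(E)-\epsilon$; letting $\epsilon\downarrow 0$ yields $(a_1\cdots a_n)^{1/n}\to C(E)$.

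The principal obstacle is the forward-direction off-$E$ asymptotic $\tfrac{1}{n}\log|P_n(z)|\to g_E(z,\infty)+\log C(E)$: the upper inequality relies on the Bernstein--Walsh extrapolation from $L^\infty(E)$ to $\bbC$ (which itself needs the $L^\infty(E)$ asymptotic built from regularity plus the Chebyshev comparison), while the matching lower bound requires either a principle-of-descent refinement respecting behavior near $\infty$ or a direct Christoffel-function estimate off $E$. Once this is in hand, the potential-theoretic uniqueness identifying $\nu$ with $\rho_E$ is routine. The converse is more mechanical, but the positive-capacity hypothesis on $\supp(\mu)$ is essential to avoid the pathology where $\mu$ concentrates entirely on the polar exceptional set of the principle-of-descent inequality.
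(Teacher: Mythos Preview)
The paper does not prove Theorem~\ref{T1.8}; it is quoted as background and attributed to Stahl--Totik \cite[Sect.~2.2]{StT} and \cite[Thm.~2.5]{EqMC}. So there is no in-paper argument to compare against. Your outline is essentially the standard potential-theoretic route taken in those references, and your forward direction correctly isolates the one genuinely delicate step (the two-sided off-$E$ asymptotic for $\tfrac{1}{n}\log|P_n|$).

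There is, however, a real error in your converse. You correctly state the principle of descent as
\[
\liminf_n U^{\nu_n}(x)\ \geq\ U^{\rho_E}(x)\ =\ \log\tfrac{1}{C(E)}\quad\text{q.e.\ on }E,
\]
but your ``i.e.'' reverses the inequality. Since $\tfrac{1}{n}\log|P_n(x)|=-U^{\nu_n}(x)$, the displayed inequality is equivalent to
\[
\limsup_n\,\tfrac{1}{n}\log|P_n(x)|\ \leq\ \log C(E)\quad\text{q.e.\ on }E,
\]
an \emph{upper} bound on $|P_n|$, not the lower bound you need for $\|P_n\|_{L^2(d\mu)}^{1/n}\geq C(E)-\varepsilon$. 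The lower envelope theorem improves descent to $\liminf_n U^{\nu_n}(x)=U^{\rho_E}(x)$ q.e., but that only yields $\limsup_n\tfrac{1}{n}\log|P_n(x)|=\log C(E)$ q.e.\ along an $x$-dependent subsequence, which cannot be fed into Egorov to produce a uniform bound $|P_n(x)|\geq (C(E)-\varepsilon)^n$ on a fixed set of positive $\mu$-measure for all large $n$.

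The argument in \cite{StT,EqMC} runs the contrapositive instead: if $\|P_{n_k}\|_{L^2(d\mu)}^{1/n_k}\to c<C(E)$ along a subsequence, one shows (via an $L^2$--capacity comparison on carriers of $\mu$) that any weak limit $\nu$ of $\nu_{n_k}$ satisfies $U^{\nu}(x)\geq\log\tfrac{1}{c}>\log\tfrac{1}{C(E)}$ on a carrier of $\mu$; if that carrier has positive capacity this forces $\nu\neq\rho_E$. The positive-capacity hypothesis enters exactly here, as you anticipated, but the mechanism is different from your Egorov step.
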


We should mention one criterion for regularity that goes back to Erd\"os--Tur\'an \cite{ET} for
$[-1,1]$ and Widom \cite{Wid} for general $E$:

\begin{theorem}\lb{T1.9} If $E=\supp(d\mu)$ and
\[
d\mu(x) =f(x)\, d\rho_E(x) + d\mu_\s (x)
\]
where $\mu_\s$ is $\rho_E$-singular and $f(x)>0$ for $\rho_E$-a.e.\ $x$, then $\mu$
is regular.
\end{theorem}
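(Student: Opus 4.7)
The plan is to prove regularity by establishing $\liminf (a_1\cdots a_n)^{1/n} \geq C(E)$; the opposite inequality $\limsup (a_1\cdots a_n)^{1/n} \leq C(E)$ was noted in the discussion before the theorem for any measure supported in $E$. Via the extremal identity $\Lambda_n(\mu) := \inf\{\|Q\|_{L^2(d\mu)} : Q\text{ monic, } \deg Q = n\} = (a_1\cdots a_n)\sqrt{\mu(\bbR)}$, the goal becomes $\liminf \Lambda_n(\mu)^{1/n} \geq C(E)$.

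First I would reduce to an estimate against $\rho_E$. Given $\epsilon > 0$, set $A_\epsilon := \{x \in E : f(x) \geq \epsilon\}$; the hypothesis $f > 0$ $\rho_E$-a.e.\ gives $\rho_E(E\setminus A_\epsilon)\to 0$ as $\epsilon\downarrow 0$. For every monic polynomial $Q_n$ of degree $n$,
\[
\int |Q_n|^2\, d\mu \;\geq\; \epsilon \int_{A_\epsilon} |Q_n|^2\, d\rho_E.
\]

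Next I would prove the following polynomial $L^2$ lower bound on nearly-full subsets: for each $\delta > 0$ there exists $\eta(\delta)$ with $\eta(\delta) \to 0$ as $\delta\to 0$ such that for every Borel $A\subset E$ with $\rho_E(E\setminus A) < \delta$, every monic $Q_n$ of degree $n$, and all sufficiently large $n$,
\[
\int_A |Q_n|^2\, d\rho_E \;\geq\; (1-\eta(\delta))^n\, C(E)^{2n}.
\]
Combining this with the reduction above gives $\Lambda_n(\mu)^{1/n}\geq \epsilon^{1/(2n)}(1-\eta(\delta))^{1/2}C(E)$; sending $n\to\infty$ and then $\epsilon,\delta\to 0$ finishes the proof.

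The hard part will be the polynomial $L^2$ bound, which is where the potential theory lives. The baseline case $A = E$ reduces to $\Lambda_n(\rho_E)^{1/n}\to C(E)$, i.e.\ to the regularity of $\rho_E$ itself. To pass from $A = E$ to nearly-full $A$ I would split $\int_E = \int_A + \int_{E\setminus A}$ and bound $\int_{E\setminus A}|Q_n|^2 d\rho_E \leq \|Q_n\|_{L^\infty(E)}^2 \rho_E(E\setminus A)$, then invoke a Bernstein--Walsh-type estimate (involving the Green's function $g_E(\cdot,\infty)$) to control $\|Q_n\|_{L^\infty(E)}$ in terms of $\|Q_n\|_{L^2(d\rho_E)}$ with only subexponential loss. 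The delicate point is to arrange the parameters so that these losses can be absorbed into $\eta(\delta)$; this quantitative Remez/Bernstein--Walsh step is essentially the technical core of the Erd\H{o}s--Tur\'an, Widom, and Stahl--Totik proofs.
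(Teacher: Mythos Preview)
Your outline is the classical potential-theoretic route (Widom, Van Assche, Stahl--Totik): compare $d\mu$ to $\rho_E$ via the extremal $L^2$ problem, then use a Remez/Nikolskii-type inequality to show that deleting a set of small $\rho_E$-measure costs only a factor $(1-\eta(\delta))^n$. As a strategy this is sound and, once the Remez step is made precise, yields the full theorem for arbitrary compact $E$. One caution: the splitting $\int_A=\int_E-\int_{E\setminus A}$ together with $\int_{E\setminus A}\le\delta\,\|Q_n\|_\infty^2$ and a Nikolskii bound $\|Q_n\|_\infty\le M_n\|Q_n\|_{L^2(d\rho_E)}$ does \emph{not} by itself give what you want, because $M_n$ grows with $n$ and $\delta M_n^2$ need not stay below $1$. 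The actual Remez argument instead compares $\|Q_n\|_{L^\infty(E)}$ to $\|Q_n\|_{L^\infty(A)}$ (or the $L^2$ norm over $A$) directly, with the exponential constant governed by how close $C(A)$ is to $C(E)$; controlling $C(A)$ in terms of $\rho_E(E\setminus A)$ is where the work lies.

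The paper, however, deliberately avoids all of this machinery. Its proof (Theorem~5.1, stated under the extra hypothesis that $E\setminus E^{\intt}$ has capacity zero) runs through the new weak-convergence result Theorem~1.5: one shows that any weak limit of $\tfrac{1}{n+1}K_n(x,x)\,d\mu$ must dominate $d\rho_E$ on $E^{\intt}$, using Totik's pointwise bound $\limsup n\lambda_{n-1}(x)\le w(x)/\rho_E(x)$ (Theorem~1.7) and Fatou's lemma; since both are probability measures, equality is forced, and then Theorem~1.5 identifies this limit with the limit of the zero counting measures $d\nu_n$, which gives regularity via Theorem~1.8. No polynomial inequalities, no Green's function estimates. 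The trade-off is scope: the CD-kernel argument needs an interval inside $E$ to invoke Theorem~1.7, hence the capacity-zero boundary assumption, whereas your classical route handles general $E$.
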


There is a proof of Van Assche \cite{VA} of the general case presented in \cite{EqMC} that is
not difficult. But in the case where $E^{\intt}$ (interior in the sense of $\bbR$) differs
from $E$ by a set of capacity zero (e.g., $E=[-1,1]$), we will find a proof that uses
only our Theorem~\ref{T1.5}/\ref{T1.7} strategy. In particular, we will have a proof of
the Erd\"os--Tur\'an result that uses neither potential theory nor polynomial inequalities.

We can now describe the content of this paper. In Section~\ref{s2}, we prove the main
weak convergence result, Theorem~\ref{T1.5}. In Section~\ref{s3}, we prove the analog of
the Erd\"os--Tur\'an result for $\partial\bbD$ and illustrate how these ideas are connected
to regularity criterion of Stahl--Totik \cite{StT}. Section~\ref{s4} proves Theorem~\ref{T1.4}
for $\mu$ regular on $\partial\bbD$. Sections~\ref{s5} and \ref{s6} then parallel
Sections~\ref{s3} and \ref{s4} but for general compact sets $E\subset\bbR$. Section~\ref{s7},
motivated by work of Totik--Uhlmann \cite{TU}, provides a comparison result about densities
of zeros. Section~\ref{s9} does the analog of Sections~\ref{s5} and \ref{s6} for general
$E\subset\partial\bbD$. To do this, we need Totik's result \eqref{1.17} in that situation.
This does not seem to be in the literature, so Section~\ref{s8} fills that need.

\medskip
It is a pleasure to thank Jonathan Breuer, Yoram Last, and especially Vilmos Totik for useful
conversations. I would also like to thank Ehud de Shalit and Yoram Last for the hospitality of the
Einstein Institute of Mathematics of the Hebrew University during part of the preparation of
this paper.

%%%%%%%%%%%%%%%%%%%%%%%%%%%%%%%
\section{Weak Convergence} \lb{s2}
%%%%%%%%%%%%%%%%%%%%%%%%%%%%%%%

Our main goal here is to prove a generalization of Theorem~\ref{T1.5} (and so also that theorem). We
let $\mu$ be a measure of compact support in $\bbC$ and
\begin{equation} \lb{2.1}
N(\mu)=\sup\{\abs{z}\mid z\in\supp(d\mu)\}
\end{equation}
We let $M_z$ be multiplication by $z$ on $L^2 (\bbC,d\mu)$, so
\begin{equation} \lb{2.2}
\|M_z\|=N(\mu)
\end{equation}
and let $Q_n$ be the $(n+1)$-dimensional orthogonal projection onto polynomials of degree $n$ or
less. All estimates here depend on

\begin{proposition} \lb{P2.1} Fix $\ell=1,2,\dots$. Then
\begin{equation} \lb{2.3}
Q_n M_z^\ell Q_n - (Q_n M_z Q_n)^\ell
\end{equation}
is an operator of rank at most $\ell$ and norm at most $2N(\mu)^\ell$.
\end{proposition}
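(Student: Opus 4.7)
The strategy exploits the fact that multiplication by $z$ raises polynomial degree by exactly one, so that $(I-Q_n)M_z Q_n$ has rank at most one; the full statement then follows by peeling off one factor of $M_z$ at a time via induction on $\ell$.

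I would first observe that for any $f\in L^2(\bbC,d\mu)$, $M_z Q_n f=z(Q_n f)$ is a polynomial of degree at most $n+1$, hence lies in $\ran(Q_{n+1})$. Thus $M_z Q_n=Q_{n+1}M_z Q_n$, so that
$$(I-Q_n)M_z Q_n=(Q_{n+1}-Q_n)M_z Q_n,$$
which has rank at most one and norm at most $N(\mu)$.

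Next, I would induct on $\ell$, with the base case $\ell=1$ giving the zero operator. For the inductive step, insert $I=Q_n+(I-Q_n)$ between $M_z^\ell$ and $M_z$ and use $Q_n^2=Q_n$ to write
$$Q_n M_z^{\ell+1}Q_n=(Q_n M_z^\ell Q_n)(Q_n M_z Q_n)+Q_n M_z^\ell (I-Q_n)M_z Q_n.$$
Subtracting $(Q_n M_z Q_n)^{\ell+1}=(Q_n M_z Q_n)^\ell(Q_n M_z Q_n)$ and invoking the inductive hypothesis that $R_\ell:=Q_n M_z^\ell Q_n-(Q_n M_z Q_n)^\ell$ has rank at most $\ell-1$ yields
$$R_{\ell+1}=R_\ell(Q_n M_z Q_n)+Q_n M_z^\ell(I-Q_n)M_z Q_n,$$
whose two summands have rank at most $\ell-1$ and $1$ respectively (the latter by the first observation). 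Hence $\rank(R_{\ell+1})\leq\ell$, closing the induction.

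The norm bound is then immediate from the triangle inequality, since $\|Q_n M_z^\ell Q_n\|\leq\|M_z\|^\ell=N(\mu)^\ell$ and $\|(Q_n M_z Q_n)^\ell\|\leq\|Q_n M_z Q_n\|^\ell\leq N(\mu)^\ell$. The only delicate point is arranging the induction so that the rank grows by at most one per step: if one instead tried to factor $(I-Q_n)$ out of the full $M_z^\ell Q_n$ in a single shot, one would only control the rank by $\rank(Q_{n+\ell}-Q_n)\leq\ell$, and iterating such a bound would balloon the rank to an exponential quantity, missing the linear dependence on $\ell$ that the proposition asserts.
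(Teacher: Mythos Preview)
Your proof is correct, and in fact your induction yields the slightly sharper bound $\rank(R_\ell)\leq\ell-1$ rather than the stated $\ell$. The paper, however, takes a more direct route with no induction: writing $X$ for the operator in \eqref{2.3}, one observes that $X$ vanishes on $\ker(Q_n)$ trivially, and also on $\ran(Q_{n-\ell})$, because if $\deg f\leq n-\ell$ then each successive multiplication by $z$ keeps the polynomial inside $\ran(Q_n)$, so all the intermediate $Q_n$'s act as the identity and both terms equal $z^\ell f$. Since $\ker(Q_n)\oplus\ran(Q_{n-\ell})$ has codimension $\ell$, the rank bound follows immediately. Your approach trades this single observation for an inductive bookkeeping argument; the payoff is the improved constant, while the paper's version is shorter and makes transparent \emph{which} $\ell$-dimensional subspace supports the operator (namely the span of $x_{n-\ell+1},\dots,x_n$). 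Your closing remark that a careful induction is needed to avoid exponential blowup is thus somewhat beside the point: the paper's one-line kernel argument avoids any iteration altogether. The norm estimate is handled identically in both proofs.
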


\begin{proof} Let $X$ be the operator in \eqref{2.3}. Clearly, $X=0$ on $\ran(1-Q_n) = \ker(Q_n)$.
Since $M_z$ maps $\ran (Q_j)$ to $\ran (Q_{j+1})$, $X=0$ on $\ran (Q_{n-\ell})$. This shows that
$X$ has rank at most $\ell$. The norm estimate is immediate from \eqref{2.2}.
\end{proof}

Here is the link to $K_n$ and to $d\nu_{n+1}$. Let $X_j(z,d\mu)$ be the monic OPs for $\mu$ and
let $x_n = X_n/\|X_n\|$.

\begin{proposition}\lb{P2.2}
\begin{SL}
\item[{\rm{(i)}}] We have for all $w\in\bbC$,
\begin{equation} \lb{2.4}
\det_{Q_n}(w-Q_n M_z Q_n) =X_{n+1} (w,d\mu)
\end{equation}
In particular, if $d\nu_{n+1}$ is the zero counting measure for $X_{n+1}$, then
\begin{equation} \lb{2.5}
\f{1}{n+1} \tr ((Q_n M_z Q_n)^\ell) =\int z^\ell\, d\nu_{n+1}(z)
\end{equation}

\item[{\rm{(ii)}}] Let
\begin{equation} \lb{2.6}
K_n (z,w) =\sum_{j=0}^n \ol{x_j(z)}\, x_j(w)
\end{equation}
Then
\begin{equation} \lb{2.7}
\tr(Q_n M_z^\ell Q_n) = \int z^\ell K_n(z,z)\, d\mu(z)
\end{equation}
\end{SL}
\end{proposition}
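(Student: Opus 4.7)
For part (i), the plan is to identify the $(n+1)$-dimensional operator $A := Q_n M_z Q_n$ restricted to $\ran(Q_n)$ by exhibiting $\bdone$ as a cyclic vector and showing $X_{n+1}(A)\bdone = 0$. The action of $A$ on monomials is transparent: for $0 \le k \le n-1$, one has $z^{k+1} \in \ran(Q_n)$, so $A z^k = Q_n z^{k+1} = z^{k+1}$, and only $A z^n = Q_n z^{n+1}$ is nontrivial.

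Here $X_{n+1}$ enters naturally. Writing $X_{n+1}(z) = z^{n+1} + \sum_{k=0}^n c_k z^k$ and using $X_{n+1} \perp \ran(Q_n)$, one has $Q_n X_{n+1} = 0$, hence $A z^n = Q_n z^{n+1} = -\sum_{k=0}^n c_k z^k$. Combined with $A^k \bdone = z^k$ for $0 \le k \le n$, a clean telescoping gives
\begin{equation*}
X_{n+1}(A) \bdone = A^{n+1}\bdone + \sum_{k=0}^n c_k A^k \bdone = -\sum_{k=0}^n c_k z^k + \sum_{k=0}^n c_k z^k = 0.
\end{equation*}
Because $\bdone$ is cyclic for $A$ (its orbit spans all of $\ran(Q_n)$), $X_{n+1}(A) = 0$ on $\ran(Q_n)$. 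The minimum-degree polynomial annihilating $\bdone$ must have degree exactly $n+1$, because any relation $\sum_{k=0}^{n} m_k z^k = 0$ in $L^2(d\mu)$ would force $m_k = 0$ by linear independence of $\{1, z, \dots, z^n\}$ (this is the standing hypothesis built into the very existence of $X_{n+1}$). Hence $X_{n+1}$ is simultaneously the minimal and the characteristic polynomial of $A$, which is (2.4). Identity (2.5) is then immediate: $\tr(A^\ell) = \sum_j w_j^\ell$ where $w_j$ are the zeros of $X_{n+1}$, and this equals $(n+1)\int w^\ell \, d\nu_{n+1}(w)$ by definition of the normalized zero counting measure.

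For part (ii), the plan is to treat $Q_n$ as an integral operator on $L^2(\bbC,d\mu)$ with reproducing kernel. Since $Q_n = \sum_{j=0}^n \langle x_j,\dott\rangle x_j$, its integral kernel is $\sum_j x_j(w)\overline{x_j(z)}$, whose diagonal is precisely $K_n(z,z)$ of (2.6). Cyclicity of the trace yields $\tr(Q_n M_z^\ell Q_n) = \tr(Q_n^2 M_z^\ell) = \tr(Q_n M_z^\ell)$, and $Q_n M_z^\ell$ has integral kernel $K_n(w,z)\, z^\ell$, so its trace is $\int z^\ell K_n(z,z)\, d\mu(z)$, giving (2.7).

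The only mildly delicate step is the cyclicity argument in (i), namely verifying \emph{both} that $X_{n+1}(A)\bdone = 0$ \emph{and} that no polynomial of lower degree annihilates $\bdone$; both reduce to linear independence of $\{1,z,\dots,z^n\}$ in $L^2(d\mu)$, so no real obstacle arises. Part (ii) is then a one-line exercise in trace formulas for integral operators.
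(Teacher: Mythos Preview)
Your proof is correct, but your route to (2.4) differs from the paper's. You recognize that $\bdone$ is cyclic for $A = Q_n M_z Q_n$ with $A^k\bdone = z^k$ for $0\le k\le n$, so in the basis $\{1,z,\dots,z^n\}$ the operator $A$ is the companion matrix of $X_{n+1}$; the characteristic polynomial is then forced. The paper instead works zero by zero: for each root $z_0$ of $X_{n+1}$ of multiplicity $\ell$, it exhibits $\varphi = X_{n+1}/(z-z_0)^\ell \in \ran(Q_n)$ as a generalized eigenvector with $(A-z_0)^\ell\varphi = Q_n X_{n+1} = 0$ and $(A-z_0)^{\ell-1}\varphi \neq 0$, so the algebraic multiplicity of $z_0$ as an eigenvalue of $A$ is at least $\ell$; since both $X_{n+1}$ and $\det_{Q_n}(w-A)$ are monic of degree $n+1$, they coincide. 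Your companion-matrix argument is arguably cleaner in that it handles all multiplicities at once, while the paper's approach has the virtue of making the Jordan block structure of $A$ explicit. For (ii), your integral-kernel phrasing and the paper's direct sum $\tr(Q_n M_z^\ell Q_n) = \sum_{j=0}^n \langle x_j, z^\ell x_j\rangle$ are the same computation in different notation.
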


\begin{remark} The proof of (i) is due to Davies and Simon \cite{DS}.
\end{remark}

\begin{proof} (i)  Let $z_0$ be a zero of $X_{n+1}$ of order $\ell$. Let $\varphi(z)
=X_{n+1}(z)/(z-z_0)^\ell$. Since $Q_n [X_{n+1}]=0$, we see, with $M_z^{(n)}= Q_n M_z Q_n$,
\begin{equation} \lb{2.8x}
(M_z^{(n)}-z_0)^\ell \varphi=0 \qquad
(M_z^{(n)} -z_0)^{\ell-1} \varphi\neq 0
\end{equation}
showing that $z_0$ is a zero of $\det_{Q_n}(w-M_z^{(n)})$ of order at least $\ell$. In this way, we
see $X_{n+1}(w)$ and the $\det$ have the same zeros. Since both are monic, we obtain \eqref{2.4}.

In particular, this shows
\[
\tr((M_z^{(n)})^\ell) =\sum_{\substack{\text{zeros $z_j$ of} \\ \text{multiplicity $m_j$}}}\,
m_j z_j^\ell =(n+1)\int z^\ell\, d\nu_{n+1}(z)
\]
proving \eqref{2.5}

\smallskip
(ii) In $L^2 (\bbC,d\mu)$, $\{x_j\}_{j=0}^n$ span $\ran (Q_n)$ and are an orthonormal basis, so
\begin{align*}
\tr(Q_n M_z^\ell Q_n) &= \sum_{j=0}^n \jap{x_j, z^\ell x_j} \\
&= \int \sum_{j=0}^n z^\ell \abs{x_j(z)}^2\, d\mu(z)
\end{align*}
proving \eqref{2.7}.
\end{proof}

\begin{proposition}\lb{P2.3} Let $d\eta_n$ be the probability measure
\begin{equation} \lb{2.8}
d\eta_n(z) = \f{1}{n+1}\, K_n(z,z)\, d\mu(z)
\end{equation}
Then for $\ell=0,1,2,\dots$,
\begin{equation} \lb{2.9}
\biggl| \int z^\ell\, d\eta_n(z) - \int z^\ell\, d\nu_{n+1}(z)\biggr| \leq
\f{2\ell N(\mu)^\ell}{n+1}
\end{equation}

Suppose there is a compact set, $K\subset\bbC$, containing the supports of all $d\nu_n$ and
the support of $d\mu$ so that $\{z^\ell\}_{\ell=0}^\infty \cup \{\bar z^\ell\}_{\ell=0}^\infty$
are $\|\cdot\|_\infty$-total in the continuous function on $K$. Then for any subsequence
$n(j)$, $d\eta_{n(j)}\to d\nu_\infty$ if and only if $d\nu_{n(j)+1}\to d\nu_\infty$.
\end{proposition}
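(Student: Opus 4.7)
The plan is to combine Propositions~\ref{P2.1} and \ref{P2.2} to prove the moment bound \eqref{2.9}, extend it to antiholomorphic moments by taking adjoints, and then use the totality hypothesis to upgrade moment convergence to weak convergence of the probability measures $d\eta_n$ and $d\nu_{n+1}$.

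For \eqref{2.9}, I would subtract \eqref{2.5} from $\f{1}{n+1}$ times \eqref{2.7} to obtain
\[
\int z^\ell\, d\eta_n(z) - \int z^\ell\, d\nu_{n+1}(z) = \f{1}{n+1}\,\tr\bigl(Q_n M_z^\ell Q_n - (Q_n M_z Q_n)^\ell\bigr).
\]
Proposition~\ref{P2.1} says the operator inside the trace has rank at most $\ell$ and operator norm at most $2 N(\mu)^\ell$. The standard inequality $|\tr(A)| \le \rank(A)\cdot\|A\|$ for finite rank $A$ (the at most $\rank(A)$ nonzero singular values are each bounded by $\|A\|$, and $|\tr(A)|$ is dominated by the trace norm $\sum\sigma_j(A)$) then yields \eqref{2.9} at once.

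For the antiholomorphic moments, I would take adjoints. Since $M_z^* = M_{\bar z}$ on $L^2(\bbC,d\mu)$ and $Q_n$ is self-adjoint, the adjoint of $Q_n M_z^\ell Q_n - (Q_n M_z Q_n)^\ell$ is $Q_n M_{\bar z}^\ell Q_n - (Q_n M_{\bar z} Q_n)^\ell$, so it inherits the same rank and norm bounds. The analogs of \eqref{2.5} and \eqref{2.7} with $\bar z^\ell$ in place of $z^\ell$ follow from identical computations, using that the eigenvalues of $(Q_n M_z Q_n)^*$ are the complex conjugates of the zeros of $X_{n+1}$. This delivers the same bound $\f{2\ell N(\mu)^\ell}{n+1}$ on $\bigl|\int \bar z^\ell\, d\eta_n - \int \bar z^\ell\, d\nu_{n+1}\bigr|$.

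Finally, these moment estimates show $\int f\, d\eta_n - \int f\, d\nu_{n+1}\to 0$ for every $f$ in the linear span of $\{z^\ell\}_{\ell\ge 0}\cup\{\bar z^\ell\}_{\ell\ge 0}$. Given an arbitrary $f\in C(K)$ and $\delta>0$, the totality hypothesis provides $g$ in this span with $\|f-g\|_\infty<\delta$; because $d\eta_n$ and $d\nu_{n+1}$ are probability measures supported in $K$, the triangle inequality promotes the moment convergence to $\int f\, d\eta_n - \int f\, d\nu_{n+1}\to 0$ for every $f\in C(K)$, which is exactly the equivalence of weak limits along any subsequence $n(j)$. The only substantive ingredient is the trace bound, which is routine once Proposition~\ref{P2.1} is in hand, so I do not anticipate a real obstacle.
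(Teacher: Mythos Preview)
Your proof is correct and follows the same route as the paper: combine \eqref{2.5}, \eqref{2.7}, and the rank/norm bound of Proposition~\ref{P2.1} to get \eqref{2.9}, then pass from moments to weak convergence via the totality hypothesis. The paper is terser, simply stating that \eqref{2.9} implies the weak convergence result; you have filled in the details. One minor simplification: since $d\eta_n$ and $d\nu_{n+1}$ are positive measures, the $\bar z^\ell$ bound follows from \eqref{2.9} just by complex conjugation, so the adjoint argument, while correct, is not really needed.
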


\begin{proof} \eqref{2.9} is immediate from \eqref{2.5} and \eqref{2.7} if we note that, by
Proposition~\ref{P2.1},
\begin{equation} \lb{2.10}
\abs{\tr(Q_n M_z^\ell Q_n)-\tr ((Q_n M_z Q_n)^\ell)}\leq 2N(\mu)^\ell \ell
\end{equation}
\eqref{2.9} in turn implies that we have the weak convergence result.
\end{proof}

While we were careful to use $n(j)$ and $n(j)+1$, we note that since $\abs{x_j(z)}^2\, d\mu$ is
a probability measure, we have
\begin{equation} \lb{2.10a}
\|\eta_n-\eta_{n+1}\| \leq \f{1}{n+1} + n \biggl(\f{1}{n}-\f{1}{n+1}\biggr) \leq \f{2}{n+1}
\end{equation}
so we could just as well have discussed weak limits of $\eta_{n(j)+1}$ and $\nu_{n(j)+1}$. Here
is Theorem~\ref{T1.5} for OPRL:

\begin{theorem}\lb{T2.4} For OPRL, $d\eta_{n(j)}$ converges weakly to $d\nu_\infty$ if and only if
$d\nu_{n(j)+1}$ converges weakly to $d\nu_\infty$.
\end{theorem}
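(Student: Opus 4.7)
The plan is to deduce Theorem~\ref{T2.4} as an immediate specialization of Proposition~\ref{P2.3}. What needs checking is only that the hypothesis on the set $K$ is automatic in the OPRL setting.

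First, I would take $K = [-N(\mu), N(\mu)] \subset \bbR$. This set contains $\supp(d\mu)$ by the definition \eqref{2.1} of $N(\mu)$. For the zero counting measures $d\nu_{n+1}$, I would invoke the classical fact that the zeros of an OPRL $P_{n+1}(x,d\mu)$ are real and lie in the convex hull of $\supp(d\mu) \subset \bbR$, hence in $[-N(\mu), N(\mu)] = K$. (This is a one-line consequence of the three-term recurrence \eqref{1.3} together with the fact that $M_z^{(n)} = Q_n M_z Q_n$ is self-adjoint when $d\mu$ is supported in $\bbR$, since by \eqref{2.4} the zeros of $P_{n+1}$ are its eigenvalues.) So every $d\nu_{n+1}$ and $d\mu$ are supported in the compact real interval $K$.

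Next, since $K \subset \bbR$, one has $\bar z = z$ on $K$, so the span of $\{z^\ell\}_{\ell=0}^\infty \cup \{\bar z^\ell\}_{\ell=0}^\infty$ coincides with the space of polynomials in one real variable on $K$. By the Weierstrass approximation theorem, this space is $\|\cdot\|_\infty$-dense in $C(K)$. Thus the second hypothesis of Proposition~\ref{P2.3} holds automatically in the OPRL case.

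With both hypotheses verified, Proposition~\ref{P2.3} directly yields: for any subsequence $n(j)$, $d\eta_{n(j)} \overset{w}{\longrightarrow} d\nu_\infty$ if and only if $d\nu_{n(j)+1} \overset{w}{\longrightarrow} d\nu_\infty$, which is exactly Theorem~\ref{T2.4}. There is no real obstacle; the only potential pitfall is remembering that one must check that $\{d\nu_{n+1}\}$ stays inside a fixed compact $K$ (not a priori obvious from Proposition~\ref{P2.2}), which for OPRL follows from the interlacing/convex-hull property of orthogonal polynomial zeros.
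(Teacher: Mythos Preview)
Your proposal is correct and follows the same approach as the paper: both apply Proposition~\ref{P2.3} by invoking the Weierstrass approximation theorem on a compact real interval. The paper's proof is a one-line appeal to the density of polynomials in $C([\alpha,\beta])$; you have simply made explicit the choice $K=[-N(\mu),N(\mu)]$ and the verification that all the zero measures $d\nu_{n+1}$ live in $K$, which the paper takes as understood.
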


\begin{proof} $\{x_j\}_{j=0}^\infty$ are total in $C([\alpha,\beta])$ for any real interval
$[\alpha,\beta]$, so Proposition~\ref{P2.3} is applicable.
\end{proof}

For OPUC, we need a few preliminaries: Define $\calP\colon C(\partial\bbD)\to C(\bbD)$
(with $\bbD=\{z\mid\abs{z}\leq 1\}$) by
\begin{equation} \lb{2.11}
(\calP f)(re^{i\theta}) =\int \f{1-r^2}{1+r^2- 2r\cos (\theta-\varphi)} \, f(e^{i\theta})\,
\f{d\varphi}{2\pi}
\end{equation}
for $r<1$ and $(\calP f)(e^{i\theta})=f(e^{i\theta})$ and $\calP^*\colon\calM_{+,1}(\bbD)\to
\calM_{+,1}(\partial\bbD)$, the balayage, by duality. Then the following is well known and
elementary:

\begin{proposition}\lb{P2.5} $\calP^*(d\nu)$ is the unique measure, $\eta$, on $\partial\bbD$ with
\begin{equation} \lb{2.12}
\int e^{i\ell\theta}\, d\eta(\theta) = \int z^\ell\, d\nu(z)
\end{equation}
for $\ell=0,1,2,\dots$.
\end{proposition}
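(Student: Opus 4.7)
My plan has two parts, corresponding to existence of a measure satisfying \eqref{2.12} and to uniqueness.

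For uniqueness, I would invoke that the trigonometric polynomials are uniformly dense in $C(\partial\bbD)$ by Stone--Weierstrass (or Fej\'er), so any finite (signed) measure on $\partial\bbD$ is determined by all its Fourier coefficients $\int e^{i\ell\theta}\, d\eta(\theta)$ for $\ell\in\bbZ$. Since $d\eta$ is a positive (hence real) measure, the coefficients for negative $\ell$ are complex conjugates of those for positive $\ell$, so the moments \eqref{2.12} for $\ell=0,1,2,\dots$ already determine $\eta$.

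For existence, I would use the defining duality of $\calP^*$, namely
\begin{equation*}
\int_{\partial\bbD} f(e^{i\theta})\, d(\calP^* \nu)(\theta) = \int_{\bbD} (\calP f)(z)\, d\nu(z)
\end{equation*}
for every $f\in C(\partial\bbD)$. Applying this with $f(e^{i\theta})=e^{i\ell\theta}$, the task reduces to verifying the identity $(\calP(e^{i\ell\cdot}))(z)=z^\ell$ on all of $\ol{\bbD}$. For $\ell\geq 0$ the function $z^\ell$ is holomorphic, hence harmonic, on $\bbD$ and extends continuously to $\ol{\bbD}$ with boundary values $e^{i\ell\theta}$; by the standard solution of the Dirichlet problem for the disk, its Poisson integral reproduces it, which is exactly the asserted formula. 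Plugging this into the duality gives \eqref{2.12}.

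There is no real obstacle here; the one point that needs a sentence of care is that $\nu$ may have mass on $\partial\bbD$ itself, so one should emphasize that $\calP f$ is defined continuously on $\ol{\bbD}$ (equal to $f$ on $\partial\bbD$ by construction \eqref{2.11}), and that the identity $\calP(e^{i\ell\cdot})(z)=z^\ell$ holds on the closed disk, not just its interior. Everything else is routine.
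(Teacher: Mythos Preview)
Your argument is correct; the paper itself gives no proof, introducing Proposition~\ref{P2.5} only as ``well known and elementary,'' and your sketch supplies precisely the standard verification one would expect (density of trigonometric polynomials for uniqueness, and the fact that $\calP$ reproduces the harmonic extension so $\calP(e^{i\ell\cdot})(z)=z^\ell$ on $\ol{\bbD}$ for existence). Your remark about continuity up to the boundary, to accommodate possible mass of $\nu$ on $\partial\bbD$, is the right point of care.
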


Widom \cite{Wid} proved that if $\mu$ is supported on a strict subset of $\partial\bbD$, then
the zero counting measure, $d\nu_n$, has weak limits supported on $\partial\bbD$,
\begin{equation} \lb{2.13}
\calP^*(d\nu_n) -d\nu_n\overset{w}{\longrightarrow} 0
\end{equation}

Finally, we note the following about paraorthogonal polynomials (POPUC) defined for $\beta\in
\partial\bbD$ by
\begin{equation} \lb{2.14}
P_{n+1}(z,\beta) =z\Phi_n(z)-\bar\beta \Phi_n^*(z)
\end{equation}
defined in \cite{JNT89} and studied further in \cite{CMV02,CMV,CMV06,Gol02,S308,Wong}.

\begin{proposition}\lb{P2.6} Let $\beta_n$ be an arbitrary sequence $\partial\bbD$ and let
$d\nu_{n+1}^{(\beta_n)}$ be the zero counting measure for $P_{n+1}(z,\beta_n)$ {\rm{(}}known
to live on $\partial\bbD$; see, e.g. \cite{S308}{\rm{)}}. Then for any $\ell\geq 0$,
\begin{equation} \lb{2.15}
\biggl| \int z^\ell\, d\nu_{n+1} -\int z^\ell \, d\nu_{n+1}^{(\beta_n)}\biggr| \to 0
\end{equation}
\end{proposition}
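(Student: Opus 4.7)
The plan is to prove the sharper quantitative bound
\[
\biggl|\int z^\ell\, d\nu_{n+1} - \int z^\ell\, d\nu_{n+1}^{(\beta_n)}\biggr| \leq \frac{2\ell}{n+1},
\]
which immediately yields \eqref{2.15}. The argument mirrors the proof of \eqref{2.9}: I would produce an $(n+1)$-dimensional operator $U_n$ whose trace powers encode the POPUC moments, and compare it with $T_n := Q_n M_z Q_n$ via a rank-plus-norm estimate.

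By Proposition~\ref{P2.2}(i), $\tr(T_n^\ell) = (n+1) \int z^\ell\, d\nu_{n+1}$. For the POPUC side, I would define $U_n$ on $\ran(Q_n) \subset L^2(\partial\bbD, d\mu)$ by
\[
U_n\varphi_j = z\varphi_j \quad (0 \leq j \leq n-1), \qquad U_n\varphi_n = \bar\beta_n\, \varphi_n^*,
\]
with $\varphi_n^* = \Phi_n^*/\|\Phi_n\|$. This is precisely the Szeg\H{o} recursion \eqref{1.1} at step $n$ with $\alpha_n$ replaced by $\beta_n$; since $|\beta_n|=1$, the coupling factor vanishes, the $\varphi_{n+1}$-component drops, and $U_n$ preserves $\ran(Q_n)$. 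The vectors $\{z\varphi_j\}_{j=0}^{n-1}$ are orthonormal (multiplication by $z$ is an isometry on $L^2(\partial\bbD,d\mu)$), and $\varphi_n^*$ is orthogonal to $z^1,\dots,z^n$ by the defining property of $\Phi_n$, so the columns of $U_n$ in the $\{\varphi_j\}$ basis form an orthonormal set; hence $U_n$ is unitary. The standard identification (see, e.g., \cite{S308}) gives $\det(wI - U_n) = P_{n+1}(w, \beta_n)$, so $\tr(U_n^\ell) = (n+1) \int z^\ell\, d\nu_{n+1}^{(\beta_n)}$.

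Next I would compute $F_n := U_n - T_n$. For $j < n$, $z\varphi_j$ has degree $\leq n$, so $T_n\varphi_j = Q_n(z\varphi_j) = z\varphi_j = U_n\varphi_j$. At $j=n$, the Szeg\H{o} recursion yields $z\varphi_n = \rho_n\varphi_{n+1} + \bar\alpha_n\varphi_n^*$, hence $T_n\varphi_n = \bar\alpha_n\varphi_n^*$ (since $Q_n\varphi_{n+1}=0$). Therefore $F_n\varphi_j = 0$ for $j < n$ and $F_n\varphi_n = (\bar\beta_n - \bar\alpha_n)\varphi_n^*$, giving $\rank(F_n) \leq 1$ and $\|F_n\| \leq 2$.

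Expanding
\[
U_n^\ell - T_n^\ell = \sum_{k=0}^{\ell-1} U_n^{\ell-1-k} F_n T_n^k
\]
and applying $|\tr(A)| \leq \rank(A)\cdot\|A\|$ together with $\|U_n\|=1$, $\|T_n\| \leq 1$, and $\|F_n\| \leq 2$ yields $|\tr(U_n^\ell) - \tr(T_n^\ell)| \leq 2\ell$. Division by $n+1$ finishes the proof. The main technical subtlety is the construction of $U_n$ and the verification that its characteristic polynomial is $P_{n+1}(\cdot, \beta_n)$; once those OPUC-specific facts are in hand, the rank-one difference and telescoping trace estimate are structurally identical to Proposition~\ref{P2.1}.
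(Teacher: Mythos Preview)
Your proof is correct and follows essentially the same strategy as the paper: exhibit two $(n+1)\times(n+1)$ matrices with characteristic polynomials $\Phi_{n+1}$ and $P_{n+1}(\cdot,\beta_n)$ that differ by a rank-one operator of norm at most $2$, then telescope to bound $|\tr(\cdot^\ell)-\tr(\cdot^\ell)|$ by $2\ell$. The only cosmetic difference is that the paper invokes the truncated CMV matrix $\calC_{n+1,F}$ and its unitary dilation $\calC_{n+1,F}^{(\beta_n)}$ from \cite{OPUC1,CMV06,S308}, whereas you stay within the Hessenberg/GGT model $T_n=Q_nM_zQ_n$ already set up in Proposition~\ref{P2.2} and build the unitary $U_n$ by hand---this makes your argument a bit more self-contained but is otherwise the same proof.
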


\begin{proof} Let $\calC_{n+1,F}$ be the truncated CMV matrix of size $n+1$ whose
eigenvalues are the zeros of $\Phi_{n+1}(z)$ (see \cite[Ch.~4]{OPUC1}) and let
$\calC_{n+1,F}^{(\beta_n)}$ be the unitary dialation whose eigenvalues are the
zeros of $P_{n+1}(z,\beta_n)$ (see \cite{CMV06,S308,S-46}). Then $\calC_{n+1,F} -
\calC_{n+1,F}^{(\beta_n)}$ is rank one with norm bounded by $2$, so $\calC_{n+1,F}^\ell
- [\calC_{n+1,F}^{(\beta)}]^\ell$ is rank at most $\ell$ with norm $2$. Thus
\begin{equation} \lb{2.16}
\abs{\tr (\calC_{n+1,F}^\ell - (\calC_{n+1,F}^{(\beta)})^\ell)} \leq 2\ell
\end{equation}
and
\[
\abs{\text{LHS of \eqref{2.15}}} \leq \f{2\ell}{n+1}
\]
proving \eqref{2.15}.
\end{proof}

\begin {theorem}\lb{T2.7} For OPUC, $d\eta_{n(j)}$ converges weakly to $d\nu_\infty$ if and only if
each of the following converges weakly to $d\nu_\infty$:
\begin{SL}
\item[{\rm{(i)}}] $d\nu_{n+1}^{(\beta_n)}$ for any $\beta_n$.
\item[{\rm{(ii)}}] $\calP (d\nu_{n+1})$, the balayage of $d\nu_{n+1}$.
\item[{\rm{(iii)}}] $d\nu_{n+1}$ if $\supp(d\mu)\neq\partial\bbD$.
\end{SL}
\end{theorem}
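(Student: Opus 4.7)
The plan is to reduce each of (i)--(iii) to the moment estimate \eqref{2.9} combined with the observation that $d\eta_n$ is supported on $\supp(d\mu)\subset\partial\bbD$. On $\partial\bbD$, the family $\{z^\ell,\bar z^\ell\}_{\ell\geq 0}$ is $\|\cdot\|_\infty$-total by Stone--Weierstrass, and since our measures are positive, the $\bar z^\ell$ moments are automatically the complex conjugates of the $z^\ell$ moments. Hence, for any sequence of probability measures supported on $\partial\bbD$, weak convergence is equivalent to convergence of all moments $\int z^\ell\,d\mu_n$, $\ell\geq 0$. Proposition \ref{P2.3} already gives
\[
\int z^\ell\,d\eta_n - \int z^\ell\,d\nu_{n+1}\to 0, \qquad \ell=0,1,2,\dots.
\]

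For (i), the zeros of $P_{n+1}(\cdot,\beta_n)$ lie on $\partial\bbD$, so $d\nu_{n+1}^{(\beta_n)}$ is a probability measure on $\partial\bbD$. Proposition \ref{P2.6} combined with the display above yields $\int z^\ell (d\eta_n - d\nu_{n+1}^{(\beta_n)})\to 0$ for every $\ell\geq 0$, and since both measures live on $\partial\bbD$ this is equivalent to weak convergence, so along any $n(j)$ one converges to $d\nu_\infty$ iff the other does. For (ii), Proposition \ref{P2.5} says that $\calP^*(d\nu_{n+1})$ is a probability measure on $\partial\bbD$ with the same $z^\ell$ moments ($\ell\geq 0$) as $d\nu_{n+1}$, so the same moment-comparison argument applies verbatim.

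For (iii), I would invoke Widom's theorem \eqref{2.13}: when $\supp(d\mu)$ is a proper subset of $\partial\bbD$, $\calP^*(d\nu_{n+1})-d\nu_{n+1}\overset{w}{\longrightarrow}0$ as measures on $\ol{\bbD}$. Consequently $d\nu_{n(j)+1}\overset{w}{\to}d\nu_\infty$ iff $\calP^*(d\nu_{n(j)+1})\overset{w}{\to}d\nu_\infty$, and applying (ii) gives the equivalence with weak convergence of $d\eta_{n(j)}$.

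The main subtlety, and the reason the extra support hypothesis is imposed in (iii) but not in (i) or (ii), is that $d\nu_{n+1}$ for OPUC can have mass strictly inside $\bbD$, so convergence of the $z^\ell$ moments alone ($\ell\geq 0$) does not determine its weak limit on $\ol{\bbD}$; Widom's theorem is precisely what bridges that gap by forcing the weak limits onto $\partial\bbD$. In (i) and (ii) this gap is absent, because passing to POPUC or to the balayage puts us on $\partial\bbD$ by construction.
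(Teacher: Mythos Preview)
Your proof is correct and is exactly the unpacking of the paper's one-line proof, ``Immediate from Proposition~\ref{P2.3} and the above remark,'' where ``the above remark'' refers to Propositions~\ref{P2.5} and \ref{P2.6} together with Widom's result \eqref{2.13}. Your explanation of why the support hypothesis is needed only in (iii)---because $d\nu_{n+1}$ may have mass inside $\bbD$, so $z^\ell$-moments alone do not determine weak limits on $\ol\bbD$---is a welcome clarification that the paper leaves implicit.
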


\begin{proof} Immediate from Proposition~\ref{P2.3} and the above remark.
\end{proof}

When I mentioned Theorem~\ref{T2.4} to V.~Totik, he found an alternate proof, using tools more
familiar to OP workers, that provides some insight. With his permission, I include this proof.
Let me discuss the result for convergence of sequences rather than subsequences and then give
some remarks to handle subsequences.

The key to Totik's proof is Gaussian quadratures which says that if $\{x_j^{(n)}\}_{j=1}^n$
are the zeros of $p_n(x,d\mu)$, and $\lambda_{n-1}(x)=K_{n-1}(x,x)^{-1}$, then for any polynomial
$R_m$ of degree $m\leq 2n-1$,
\begin{equation} \lb{2.17}
\int R_m(x)\, d\mu(x) =\sum_{j=1}^n \lambda_{n-1} (x_j^{(n)}) R_m (x_j^{(n)})
\end{equation}
We will also need the Christoffel variation principle
\begin{equation} \lb{2.18}
n\leq q\Rightarrow \lambda_q (x) \leq \lambda_n(x)
\end{equation}
which is immediate from \eqref{1.9a}.

Suppose we know $d\nu_n\to d\nu_\infty$. Fix a polynomial $Q_m$ of degree $m$ with $Q_m\geq 0$ on
$\cvh(\supp(d\mu))$. Let
\[
R_{m+2n}(x) = \f{1}{n+1}\, Q_m(x) K_n(x,x)
\]
which has degree $m+2n$. Thus, if $N=n+m$, we have by \eqref{2.17} that
\begin{align}
\int Q_m(x) &\biggl[\f{1}{n+1}\, K_n(x,x)\biggr]\, d\mu \notag \\
&= \sum_{j=1}^N \lambda_{N-1} (x_j^{(N-1)}) Q_m (x_j^{(N-1)})\biggl(\f{1}{n+1}\biggr)
\lambda_n (x_j^{(N-1)})^{-1} \lb{2.19} \\
&\leq \biggl( \f{N+1}{n+1}\biggr) \f{1}{N+1} \sum_{j=1}^N Q_m (x_j^{(N-1)}) \lb{2.20}
\end{align}
by $Q_m\geq 0$ and \eqref{2.19} (so $\lambda_{N-1}/\lambda_n\leq 1$). As $n\to\infty$, $N/n\to 1$.
So, by hypothesis,
\begin{equation} \lb{2.21}
\text{RHS of \eqref{2.20}} \to \int Q_m(y)\, d\nu_\infty (y)
\end{equation}

We conclude by \eqref{2.19} that
\begin{equation} \lb{2.22}
\limsup \int Q_m(x) \, d\eta_n(x) \leq \int Q_m(y)\, d\nu_\infty (y)
\end{equation}
If $0\leq Q_m\leq 1$ on $\cvh(\supp(d\mu))$, we can apply this also to $1-Q_m$ and so conclude
for such $Q_m$ that
\[
\lim\int Q_m\, d\eta_n(x) = \int Q_m\, d\nu_\infty
\]
which implies $\wlim d\eta_n=d\nu_\infty$.

The same inequality \eqref{2.20} can be used to show that if $d\eta_n\overset{w}{\longrightarrow}
d\eta_\infty$, then
\[
\liminf \int Q_m(y)\, d\nu_m(y) \geq \int Q_m(y)\, d\eta_\infty (y)
\]
and thus, by the same $1-Q$ trick, we get $d\nu_n\to d\eta_\infty$.

To handle subsequences, we only need to note that, by \eqref{2.10a}, if $d\eta_{n(j)}\to d\eta_\infty$,
then $d\eta_{n(j)+\ell}\to d\eta_\infty$ for $\ell=0,\pm 1, \pm 2, \dots$. Similarly, by zero
interlacing, if $d\nu_{n(j)}\to d\nu_\infty$, then $d\nu_{n(j)+\ell}\to d\nu_\infty$ for fixed $\ell$.

By using the operator theoretic proof of Gaussian quadrature (see, e.g., \cite[Sect.~1.2]{OPUC1}), one
sees this proof is closely related to our proof above.

%%%%%%%%%%%%%%%%%%%%%%%%%%%%%%%%%%%%%%%%%%%%%%%%%%%%%%%%%%%%%%%%%%%%%%%%%%%%%%
\section{Regularity for $\partial\bbD$: The Erd\"os--Tur\'an Theorem} \lb{s3}
%%%%%%%%%%%%%%%%%%%%%%%%%%%%%%%%%%%%%%%%%%%%%%%%%%%%%%%%%%%%%%%%%%%%%%%%%%%%%%

Our goal in this section is to prove:

\begin{theorem}\lb{T3.1} Let $d\mu$ on $\partial\bbD$ have the form \eqref{1.6a} with $w(\theta)>0$
for a.e.\ $\theta$. Then $\mu$ is regular, that is, \eqref{1.19} holds with $E=\partial\bbD$
{\rm{(}}so $C(E)=1${\rm{)}}.
\end{theorem}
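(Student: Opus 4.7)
The plan is to combine Theorem~\ref{T1.5} (in its OPUC form, Theorem~\ref{T2.7}) with the M\'at\'e--Nevai upper bound Theorem~\ref{T1.6}. For $E=\partial\bbD$ the equilibrium measure is $d\rho_E=d\theta/(2\pi)$, so by the converse in Theorem~\ref{T1.8} it is enough to show that, for some (equivalently any) choice of paraorthogonality parameters $\beta_n$, the POPUC zero measures satisfy
\[
d\nu_{n+1}^{(\beta_n)}\overset{w}{\longrightarrow}\frac{d\theta}{2\pi}.
\]
The hypothesis $w(\theta)>0$ a.e.\ guarantees that $\mu$ has a nontrivial absolutely continuous part and hence cannot be supported on a set of capacity zero, so the second possibility in Theorem~\ref{T1.8} is excluded and this weak convergence forces regularity. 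By Theorem~\ref{T2.7}(i), the displayed weak convergence of POPUC zeros is equivalent to
\[
d\eta_n:=\frac{1}{n+1}\,K_n(e^{i\theta},e^{i\theta})\,d\mu(\theta)\overset{w}{\longrightarrow}\frac{d\theta}{2\pi},
\]
and this is what I would actually prove.

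For the proof of weak convergence of $d\eta_n$, I would first rewrite Theorem~\ref{T1.6} (applied at index $n+1$ in place of $n$) as
\[
\liminf_{n\to\infty}\,\frac{w(\theta)}{n+1}\,K_n(e^{i\theta},e^{i\theta})\geq 1\qquad\text{a.e.}\ \theta,
\]
using $\lambda_n(e^{i\theta})=K_n(e^{i\theta},e^{i\theta})^{-1}$. For any continuous $f\geq 0$ on $\partial\bbD$, dropping the nonnegative singular contribution gives
\[
\int f\,d\eta_n\;\geq\;\int f(\theta)\,\frac{w(\theta)}{n+1}\,K_n(e^{i\theta},e^{i\theta})\,\frac{d\theta}{2\pi},
\]
and then Fatou's lemma applied to the nonnegative Lebesgue integrand yields
\[
\liminf_{n\to\infty}\int f\,d\eta_n\;\geq\;\int f(\theta)\,\frac{d\theta}{2\pi}.
\]

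To upgrade this one-sided bound to weak convergence I would use that $d\eta_n$ is a probability measure (because $\int K_n(e^{i\theta},e^{i\theta})\,d\mu=n+1$ by orthonormality) and that $d\theta/(2\pi)$ is also a probability measure. Applying the liminf inequality to $M-f$ with $M=\|f\|_\infty$ gives $\limsup_n\int f\,d\eta_n\leq\int f\,d\theta/(2\pi)$, and hence equality in the limit for every continuous $f$, which is the required weak convergence. Feeding this back through Theorem~\ref{T2.7} and the converse part of Theorem~\ref{T1.8} produces regularity. I do not anticipate a hard step: the content of the argument is packed into Theorem~\ref{T1.5} and Theorem~\ref{T1.6}, and the only potentially delicate point is the Fatou step, which is clean precisely because M\'at\'e--Nevai gives a \emph{lower} bound on $K_n\cdot w/(n+1)$ while the total mass constraint gives the matching upper bound. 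The noteworthy feature of this plan is that, unlike the classical Erd\"os--Tur\'an/Widom approach, it uses neither potential theory nor polynomial (Bernstein--Walsh type) inequalities.
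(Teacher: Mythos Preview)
Your proposal is correct and follows essentially the same route as the paper: reduce regularity to $d\eta_n\overset{w}{\to}d\theta/2\pi$ via Theorem~\ref{T2.7} and Theorem~\ref{T1.8}, invoke the M\'at\'e--Nevai bound to get the pointwise lower bound on $\tfrac{w}{n+1}K_n$, and use Fatou together with the probability normalization to close the argument. The only cosmetic difference is that the paper passes to subsequential limits $d\nu_1,d\nu_2$ of the a.c.\ and singular pieces separately (which yields Theorem~\ref{T3.3} as a byproduct), whereas you argue directly via the $M-f$ trick on continuous test functions; both are equivalent packagings of the same total-mass pigeonhole.
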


\begin{remarks} 1. This is an analog of a theorem for $[-1,1]$ proven by Erd\"os and Tur\"an
\cite{ET}. Our proof here seems to be new.

\smallskip
2. This is, of course, weaker than Rakhmanov's theorem (see \cite[Ch.~9]{OPUC2} and references therein),
but as we will see, this extends easily to some other situations.
\end{remarks}

The proof will combine the M\'at\'e--Nevai theorem (Theorem~\ref{T1.6}) and Proposition~\ref{P2.5}.
It is worth noting where the M\'at\'e--Nevai theorem comes from. By \eqref{1.9a}, if
\begin{equation} \lb{3.1}
Q_n (e^{i\theta}) = \f{1}{n+1} \sum_{j=0}^n e^{ij(\theta-\varphi)}
\end{equation}
then
\begin{equation} \lb{3.2}
\lambda_n (e^{i\varphi}) \leq \int \abs{Q_n (e^{i\theta})}^2\, d\mu(\theta)
\end{equation}
Recognizing $(n+1)\abs{Q_n}^2$ as the Fej\'er kernel, \eqref{1.15} is a standard maximal function a.e.\
convergence result.

\begin{proposition}\lb{P3.2} For any measure on $\partial\bbD$ for a.e.\ $\theta$,
\begin{equation} \lb{3.3}
\liminf_{n\to\infty}\, \f{1}{n+1}\, K_n (e^{i\theta},e^{i\theta})\geq w(\theta)^{-1}
\end{equation}
On the set where $w(\theta) >0$,
\begin{equation} \lb{3.4}
\liminf_{n\to\infty}\, \f{1}{n+1}\, w(\theta) K_n (e^{i\theta},e^{i\theta}) \geq 1
\end{equation}
\end{proposition}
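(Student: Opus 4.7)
The plan is to observe that Proposition~\ref{P3.2} is nothing more than the pointwise inversion of the M\'at\'e--Nevai bound in Theorem~\ref{T1.6}. Using the identity $\lambda_n(e^{i\theta})=K_n(e^{i\theta},e^{i\theta})^{-1}$ from \eqref{1.10}, and applying \eqref{1.15} with the index shifted by one, I would rewrite Theorem~\ref{T1.6} as
\[
\limsup_{n\to\infty}\, \f{n+1}{K_n(e^{i\theta},e^{i\theta})} \leq w(\theta) \qquad \text{for a.e. } \theta.
\]
Everything reduces to inverting this inequality a.e.

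I would then handle the two cases separately. On the set where $w(\theta)>0$, I fix $\veps>0$; eventually $(n+1)/K_n(e^{i\theta},e^{i\theta}) \leq w(\theta)+\veps$, whence $K_n(e^{i\theta},e^{i\theta})/(n+1) \geq (w(\theta)+\veps)^{-1}$ for all large $n$. Taking the liminf and then sending $\veps\downarrow 0$ gives \eqref{3.3} on this set, and multiplying through by $w(\theta)$ yields \eqref{3.4}. On the complementary set where $w(\theta)=0$, the bound $\limsup (n+1)/K_n(e^{i\theta},e^{i\theta})\leq 0$ together with the strict positivity of $K_n(e^{i\theta},e^{i\theta})$ forces $(n+1)/K_n(e^{i\theta},e^{i\theta})\to 0$, hence $K_n(e^{i\theta},e^{i\theta})/(n+1)\to\infty$, which is exactly \eqref{3.3} under the convention $w(\theta)^{-1}=+\infty$.

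The only nontrivial ingredient is the reindexing $n\mapsto n+1$ in the M\'at\'e--Nevai statement, which is costless because \eqref{1.15} holds along every sequence of indices. There is no genuine obstacle here: the proposition is a direct rewriting of Theorem~\ref{T1.6}. All the substance lies inside the M\'at\'e--Nevai bound itself, which, as the paper recalls just before the statement, follows from using the normalized Dirichlet-type polynomial \eqref{3.1} as a trial function in the Christoffel variational principle \eqref{1.9a}, recognizing $(n+1)\abs{Q_n}^2$ as the Fej\'er kernel, and invoking the standard a.e.\ convergence of Fej\'er means of a finite measure to its Radon--Nikodym derivative (via a weak-type maximal inequality).
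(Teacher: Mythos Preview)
Your proposal is correct and follows exactly the paper's approach: the paper's proof is literally two sentences, stating that \eqref{3.3} is immediate from \eqref{1.10} and \eqref{1.15}, and that \eqref{3.4} follows from \eqref{3.3} by multiplying by $w(\theta)$ when $w(\theta)>0$. Your write-up simply unpacks these same steps (the $\veps$-argument, the $w=0$ case, the index shift) in more detail than the paper bothers to, but there is no difference in method.
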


\begin{remark} If $w(\theta) =0$, \eqref{3.3} is interpreted as saying that the limit is infinite.
In that case, of course, \eqref{3.4} does not hold.
\end{remark}

\begin{proof} \eqref{3.3} is immediate from \eqref{1.10} and \eqref{1.15}. If $w\neq 0$,
$ww^{-1}=1$, so \eqref{3.3} implies \eqref{3.4}.
\end{proof}

\begin{proof}[Proof of Theorem~\ref{T3.1}] The hypothesis $w>0$ for a.e.\ $\theta$ implies $d\mu$
is not supported on a set of capacity zero. Thus, by Theorem~\ref{T1.8}, regularity holds if we
prove that the density of zeros of POPUC converges to $\f{d\theta}{2\pi}$. By Proposition~\ref{P2.5},
this follows if we prove that $\f{1}{n+1} K_{n+1}\, d\mu\to\f{d\theta}{2\pi}$.

Suppose $n(j)\to\infty$ is a subsequence with $\f{1}{n+1} K_{n+1} (e^{i\theta}, e^{i\theta})
w(\theta)\f{d\theta}{2\pi}\to d\nu_1$ and $\f{1}{n+1} K_{n+1} (e^{i\theta}, e^{i\theta})
d\mu_\s\to d\nu_2$. Then since $\f{1}{n+1} K_n\, d\mu$ is normalized,
\begin{equation} \lb{3.5}
\int [d\nu_1 + d\nu_2]=1
\end{equation}
On the other hand, by Fatou's lemma and \eqref{3.4} for any continuous $f\geq 0$ and
the hypothesis $w(\theta) >0$ a.e.\ $\theta$,
\begin{align}
\int f\, d\nu_1 &= \lim \int f \biggl[ \f{1}{n+1}\, w(\theta) K_n (e^{i\theta},e^{i\theta})\biggr]
\f{d\theta}{2\pi} \notag \\
&\geq \int \liminf \biggl[ f\, \f{1}{n+1}\, w(\theta) K_n (e^{i\theta},e^{i\theta})\biggr]
\f{d\theta}{2\pi} \notag \\
&\geq \int f(\theta)\, \f{d\theta}{2\pi} \lb{3.6}
\end{align}
Thus,
\begin{equation} \lb{3.7}
d\nu_1 \geq \f{d\theta}{2\pi}
\end{equation}
By \eqref{3.5}, this can only happen if
\begin{equation} \lb{3.8}
d\nu_1 =\f{d\theta}{2\pi} \qquad d\nu_2 =0
\end{equation}

Compactness of the space of measures proves that $\f{1}{n+1} K_{n+1}\, d\mu\to d\theta/2\pi$.
\end{proof}

Along the way, we also proved $d\nu_2=0$, that is,

\begin{theorem}\lb{T3.3} Under the hypotheses of Theorem~\ref{T3.1},
\[
\f{1}{n+1} \int  \sum_{j=0}^n \abs{\varphi_j (e^{i\theta})}^2\, d\mu_\s (\theta)\to 0
\]
\end{theorem}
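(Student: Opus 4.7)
The proof should be essentially immediate from what was already established in the proof of Theorem~\ref{T3.1}. The key observation is that
\[
\f{1}{n+1}\int \sum_{j=0}^n \abs{\varphi_j(e^{i\theta})}^2\, d\mu_\s(\theta) = \int d\sigma_n,
\]
where $d\sigma_n := \f{1}{n+1}K_n(e^{i\theta},e^{i\theta})\, d\mu_\s$. Thus the quantity in question is simply the total mass of the sequence of positive measures $d\sigma_n$ on the compact space $\partial\bbD$, so the statement to prove is equivalent to $\sigma_n(\partial\bbD)\to 0$.

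The plan is to extract weak limits along arbitrary subsequences and invoke the argument inside the proof of Theorem~\ref{T3.1}. Specifically, $\{\sigma_n\}$ is a sequence of positive measures of bounded total mass on the compact set $\partial\bbD$ (bounded because $\f{1}{n+1}K_n\, d\mu$ is a probability measure). By Banach--Alaoglu, every subsequence has a further subsequence $n(j)$ along which $\sigma_{n(j)}$ converges weakly to some measure $d\nu_2$. Simultaneously, passing to a further subsequence if necessary, $\f{1}{n(j)+1}K_{n(j)}(e^{i\theta},e^{i\theta}) w(\theta)\f{d\theta}{2\pi}$ converges weakly to some $d\nu_1$. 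But this is precisely the setup of the proof of Theorem~\ref{T3.1}, in which the Fatou/M\'at\'e--Nevai lower bound \eqref{3.4} combined with the normalization \eqref{3.5} forced $d\nu_1=\f{d\theta}{2\pi}$ and $d\nu_2=0$.

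Hence every subsequential weak limit of $d\sigma_n$ is the zero measure, which means $d\sigma_n\overset{w}{\longrightarrow} 0$. Since $\partial\bbD$ is compact, the constant function $1$ is a valid test function, so
\[
\sigma_n(\partial\bbD)=\int 1\, d\sigma_n \to \int 1\, d(0)=0,
\]
which is exactly the claimed statement. There is no real obstacle here; the theorem is a direct byproduct of the proof of Theorem~\ref{T3.1}, and the only point to record is that weak convergence to zero on a compact set entails convergence of total masses (which is where the use of $w>0$ a.e.\ and the M\'at\'e--Nevai bound genuinely enter, via the forced splitting $d\nu_1=\f{d\theta}{2\pi}$, $d\nu_2=0$).
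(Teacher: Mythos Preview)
Your proposal is correct and is essentially the same as the paper's own argument: the paper simply observes that, in the course of proving Theorem~\ref{T3.1}, every subsequential weak limit $d\nu_2$ of $\tfrac{1}{n+1}K_n\,d\mu_\s$ was shown to be zero, and records this as Theorem~\ref{T3.3}. You have merely made explicit the (trivial) passage from ``every weak subsequential limit is zero'' to ``the total mass tends to zero'' via testing against the constant function on the compact set $\partial\bbD$.
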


It would be interesting to see if these methods provide an alternate proof of the theorem of
Stahl--Totik \cite{StT} that, if for all $\eta >0$,
\begin{equation} \lb{3.9}
\lim_{n\to\infty}\, \bigl| \bigl\{\theta\mid \mu (\{\psi\mid \abs{e^{i\theta} - e^{i\psi}} \leq
\tfrac{1}{n}\}) \leq e^{-n\eta}\bigr\}\bigr| =0
\end{equation}
then $\mu$ is regular. The point is that using powers of the Fej\'er kernel, one can get
trial functions localized in an interval of size $O(\f{1}{n})$, and off a bigger interval of
size $O(\f{1}{n})$, it is exponentially small. \eqref{3.9} should say that the dominant
contribution comes from an $O(\f{1}{n})$ interval. On the other hand, the translates of
these trial functions are spread over $O(\f{1}{n})$ intervals, so one gets lower bounds on
$\f{1}{n} K_n (e^{i\theta},e^{i\theta})$ of order $\mu (e^{i\theta} - \f{c}{n}, e^{i\theta} +
\f{c}{n})^{-1}$ which are then integrated against $d\mu$ canceling this inverse and hopefully
leading to \eqref{3.7}, and so regularity.

%%%%%%%%%%%%%%%%%%%%%%%%%%%%%%%%%%%%%%%%%%%%%%%%%%
\section{Localization on $\partial\bbD$} \lb{s4}
%%%%%%%%%%%%%%%%%%%%%%%%%%%%%%%%%%%%%%%%%%%%%%%%%%

In this section, we will prove Theorem~\ref{T1.4}. Instead of using the M\'at\'e--Nevai bound to
prove regularity, we combine it with regularity to get information.

\begin{proof}[Proof of Theorem~\ref{T1.4}] As in the proof of Theorem~\ref{T3.1}, we let $d\nu_1,
d\nu_2$ be weak limits of $\f{1}{n} K(e^{i\theta}, e^{i\theta}) w(\theta) \f{d\theta}{2\pi}$ and
$\f{1}{n} K_n (e^{i\theta},e^{i\theta}) d\mu_\s (\theta)$. On $I$, the same arguments as above imply
\begin{equation} \lb{4.1}
d\nu_1 \restriction I \geq \f{d\theta}{2\pi} \restriction I
\end{equation}

By regularity, globally
\begin{equation} \lb{4.2}
d\nu_1 + d\nu_2 = \f{d\theta}{2\pi}
\end{equation}
Thus, on $I$,
\begin{equation} \lb{4.3}
\nu_1 \restriction I = \f{d\theta}{2\pi} \qquad \nu_2 \restriction I=0
\end{equation}

The second implies \eqref{1.11x}. The first implies that
\begin{equation} \lb{4.4}
\int_a^b w(\theta) \, \f{1}{n+1}\, K_n (e^{i\theta},e^{i\theta})\, \f{d\theta}{2\pi} \to (b-a)
\end{equation}

Since \eqref{3.4} and Fatou imply
\begin{equation} \lb{4.5}
\int_a^b \biggl[ w(\theta)\, \f{1}{n+1}\, K_n (e^{i\theta},e^{i\theta})-1 \biggr]_- \,
\f{d\theta}{2\pi} \to 0
\end{equation}
we obtain \eqref{1.12x}.
\end{proof}

Notice that \eqref{4.4} and \eqref{3.4} imply a pointwise a.e.\ result (which we stated as \eqref{1.13x})
\begin{equation} \lb{4.6}
\liminf_{n\to\infty}\, \f{1}{n+1}\, w(\theta) K_n (e^{i\theta},e^{i\theta}) =1
\end{equation}
We do not know how to get a pointwise result on $\limsup$ under only the condition $w(\theta) >0$
(but without a local Szeg\H{o} condition).

%%%%%%%%%%%%%%%%%%%%%%%%%%%%%%%%%%%%%%%%%%%%%%%%%%%%%%%%%%%%%%%%
\section{Regularity for $E\subset\bbR$: Widom's Theorem} \lb{s5}
%%%%%%%%%%%%%%%%%%%%%%%%%%%%%%%%%%%%%%%%%%%%%%%%%%%%%%%%%%%%%%%%

In this section and the next, our goal is to extend the results of the last two sections to situations
where $\partial\bbD$ is replaced by fairly general closed sets in $\bbR$. The keys will be Theorem~\ref{T2.4}
and Totik's Theorem~\ref{T1.7}. We begin with a few remarks on where Theorem~\ref{T1.7} comes from
(see also Section~\ref{s8}).

Since the hypothesis is that $\supp(d\mu)\subset E$, not $=E$, it suffices to find $E_n\supset E$ so that
$\rho_{E_n}(x)\to \rho_E(x)$ on $I$ and for which \eqref{1.14} can be proven. By using $\ti E_n=\{x\mid
\dist(x,E)\leq \f{1}{n}\}$, one first gets approximation by a finite union of intervals and then, by a
theorem proven by Bogatyr\"ev \cite{Bog}, Peherstorfer \cite{Peh}, and Totik \cite{Tot-acta}, one finds
$\ti E_n\subset E_n$ where the $E_n$'s are finite unions of intervals with rational harmonic measure.
For rational harmonic measures, one can use as trial polynomials $K_m(x,x_0)/K_m(x_0,x_0)$ where
$K_m$ is the CD kernel of a measure in the periodic isospectral torus and Floquet theory. (This is the
method from Simon \cite{2exts}; Totik \cite{Tot} instead uses polynomial mapping.)

\begin{theorem}\lb{T5.1} Let $E\subset\bbR$ be a compact set with $\partial E\equiv E\setminus E^\intt$
{\rm{(}}$E^\intt$ means interior in $\bbR${\rm{)}} having capacity zero {\rm{(}}e.g., a finite version
of closed intervals{\rm{)}}. Let $d\mu$ be a measure with $\sigma_\ess (d\mu)=E$ and
\begin{equation}\lb{5.1}
d\mu = f(x)\, d\rho_E + d\mu_\s
\end{equation}
where $d\mu_\s$ is $d\rho_E$-singular. Suppose $f(x)>0$ for $d\rho_E$-a.e.\ $x$. Then $d\mu$ is regular.
\end{theorem}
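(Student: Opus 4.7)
The plan is to parallel the proof of Theorem~\ref{T3.1}, with $\partial\bbD$ replaced by $E$ and the M\'at\'e--Nevai upper bound (Theorem~\ref{T1.6}) replaced by Totik's bound (Theorem~\ref{T1.7}). Since $f>0$ on a set of positive capacity, $\mu$ is not supported on a polar set, so by the converse in Theorem~\ref{T1.8} it suffices to prove $d\nu_n\overset{w}{\longrightarrow}d\rho_E$. By Theorem~\ref{T2.4}, this is equivalent to showing that every weak limit point $d\nu_\infty$ of
\[
d\eta_n := \f{1}{n+1}\,K_n(x,x)\,d\mu(x)
\]
is $d\rho_E$. Since $\supp(d\mu)$ is compact, each $d\eta_n$ is a probability measure on a fixed compact set, so $d\nu_\infty$ is automatically a probability measure.

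For the key inequality, I would fix an interval $I=(a,b)\subset E^\intt$ (every point of $E^\intt$ has such a neighborhood). On $I$, $d\rho_E=\rho_E(x)\,dx$ with $\rho_E$ strictly positive and real analytic, so the $d\rho_E$-singular measure $d\mu_\s$ is also Lebesgue singular on $I$, and the Lebesgue-AC density of $\mu$ on $I$ is exactly $w(x)=f(x)\rho_E(x)$. Totik's Theorem~\ref{T1.7} then gives, for Lebesgue-a.e.\ $x\in I$,
\[
\limsup_{n\to\infty} n\lambda_{n-1}(x)\leq \f{w(x)}{\rho_E(x)}=f(x),
\]
equivalently $\liminf_{n\to\infty}\f{1}{n+1}K_n(x,x)\geq 1/f(x)$ (interpreted as $+\infty$ when $f(x)=0$).

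Decomposing $d\eta_n=\f{1}{n+1}K_n f\,d\rho_E+\f{1}{n+1}K_n\,d\mu_\s$ and applying Fatou as in \eqref{3.6}, for any continuous $g\geq 0$ supported in $I$,
\[
\int g\,d\nu_\infty\geq \liminf_{n\to\infty}\int g\cdot \f{1}{n+1}K_n(x,x)f(x)\,d\rho_E(x)\geq \int g\cdot\f{1}{f}\cdot f\,d\rho_E=\int g\,d\rho_E,
\]
where I used that $f>0$ $d\rho_E$-a.e. Writing $E^\intt$ as a countable union of such intervals yields $d\nu_\infty\restriction E^\intt\geq d\rho_E\restriction E^\intt$ as measures. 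Since $\partial E=E\setminus E^\intt$ has capacity zero and equilibrium measures do not charge polar sets, $\rho_E(E^\intt)=1$. Combined with $d\nu_\infty(\bbR)=1$, this forces $d\nu_\infty\restriction E^\intt=d\rho_E$ and $d\nu_\infty(\bbR\setminus E^\intt)=0$, i.e.\ $d\nu_\infty=d\rho_E$, which closes the argument via Theorem~\ref{T1.8}.

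The main obstacle I anticipate is a harmless but annoying technicality: the hypothesis is $\sigma_\ess(d\mu)=E$ rather than $\supp(d\mu)=E$, so $\mu$ may possess isolated eigenvalues outside $E$, and Theorem~\ref{T1.7} as stated requires $\supp(d\mu)\subset E$. Such eigenvalues form a countable (hence polar) set that does not affect the equilibrium measure, and their point-mass contribution to $d\eta_n$ vanishes as $n\to\infty$ since $\lambda_n(x)\to\mu(\{x\})$ at every pure point and thus $\f{1}{n+1}K_n(x,x)\mu(\{x\})\to 0$; so they contribute nothing to $d\nu_\infty$. Checking this carefully (either by absorbing these eigenvalues into an enlarged compact set $E'\supset E$ with the same equilibrium measure, or by estimating their contribution directly) is the only place where more than bookkeeping is required.
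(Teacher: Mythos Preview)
Your proposal is correct and follows essentially the same route as the paper: reduce regularity to $d\nu_n\to d\rho_E$ via Theorem~\ref{T1.8}, pass to the CD-kernel measures via Theorem~\ref{T2.4}, use Totik's bound (Theorem~\ref{T1.7}) on intervals in $E^\intt$ to get $\liminf \tfrac{1}{n+1}K_n(x,x)f(x)\ge 1$, apply Fatou to obtain $d\nu_\infty\restriction E^\intt\ge d\rho_E$, and conclude from $\rho_E(E^\intt)=1$ and total mass $1$. The paper does not split $E^\intt$ into intervals explicitly but passes to a subsequence where the a.c.\ and singular pieces converge separately to $d\nu_1,d\nu_2$; this is cosmetically different but the same argument. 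Your discussion of the $\sigma_\ess$ versus $\supp$ issue is more careful than the paper's own treatment, and your proposed fix (enlarge $E$ by the countable polar set of isolated eigenvalues, which leaves $\rho_E$ unchanged) is the right one.
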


\begin{remarks} 1. In this case, $d\rho_E$ is equivalent to $\chi_E \, dx$.

\smallskip
2. For any compact $E$, this is a result of Widom \cite{Wid}; see also Van Assche \cite{VA}, Stahl--Totik
\cite{StT}, and Simon \cite{EqMC}.
\end{remarks}

\begin{proof} Essentially identical to Theorem~\ref{T3.1}. By Theorem~\ref{T1.8} and the fact that $d\mu$
is clearly not supported on sets of capacity zero, it suffices to prove that $d\nu_n\to d\rho_E$. Pick
$n(j)\to\infty$ so $\f{1}{n(j)+1} K_{n(j)}(x,x) f(x)\, d\rho_E$ and $\f{1}{n(j)+1} K_{n(j)}(x,x)\,
d\mu_\s$ separately have limits $d\nu_1$ and $d\nu_2$. \eqref{1.14} says that (given that
$f(x) >0$ for a.e.\ $x$)
\begin{equation}\lb{5.2}
\liminf\, \f{1}{n(j)+1}\, K_{n(j)}(x,x) f(x) \geq 1
\end{equation}
By Fatou's lemma on $E^\intt$,
\begin{equation}\lb{5.3}
d\nu_1 \geq d\rho_E
\end{equation}

Since also $\int (d\nu_1+d\nu_2) = 1$ and $\int_{E^\intt} d\rho_E =1$ (since $C(E\setminus E^\intt) =0$),
we conclude $d\nu_1=d\rho_E$, $d\nu_2 =0$. By compactness of probability measures, $\f{1}{n(j)+1} K_{n(j)}
(x,x)\, d\mu\overset{w}\longrightarrow d\rho_E$, implying regularity.
\end{proof}

%%%%%%%%%%%%%%%%%%%%%%%%%%%%%%%%%%%%%%%%%
\section{Localization on $\bbR$} \lb{s6}
%%%%%%%%%%%%%%%%%%%%%%%%%%%%%%%%%%%%%%%%%

Here is an analog of Theorem~\ref{T1.4} for any $E\subset\bbR$.

\begin{theorem}\lb{T6.1} Let $I=[a,b]\subset E\subset\bbR$ with $a<b$ and $E$ compact. Let $d\mu$ be a measure
on $\bbR$ so that $\sigma_\ess (d\mu)=E$ and $\mu$ is regular for $E$. Suppose
\begin{equation}\lb{6.1}
d\mu = w(x)\, dx+d\mu_\s
\end{equation}
with $d\mu_\s$ Lebesgue singular. Suppose $w(x) >0$ for a.e.\ $x\in I$. Then
\begin{SL}
\item[{\rm{(i)}}] $\f{1}{n+1} K_n(x,x)\, d\mu_\s \overset{w}{\longrightarrow} 0$
\item[{\rm{(ii)}}] $\int_I \abs{\rho_E(x) - \f{1}{n+1} w(x) K_n (x,x)}\, dx \to 0$
\end{SL}
\end{theorem}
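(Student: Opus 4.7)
The plan is to parallel Section~\ref{s4}'s proof of Theorem~\ref{T1.4}, with Totik's upper bound (Theorem~\ref{T1.7}) replacing the M\'at\'e--Nevai bound (Theorem~\ref{T1.6}) and the equilibrium density $\rho_E(x)\,dx$ on $I$ replacing $\f{d\theta}{2\pi}$. The three ingredients are: Totik's bound provides a pointwise a.e.\ lower bound on $\liminf\,\f{w(x)}{n+1}\, K_n(x,x)$ on $I$; regularity of $\mu$ combined with Theorem~\ref{T2.4} converts $d\nu_n\overset{w}{\longrightarrow} d\rho_E$ into $\tfrac{1}{n+1}\,K_n(x,x)\,d\mu\overset{w}{\longrightarrow} d\rho_E$; and Fatou's lemma forces the splitting of the limiting mass on $I$ between the absolutely continuous and singular pieces to be rigid.

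\smallskip

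Concretely, Theorem~\ref{T1.7} together with $\lambda_{n-1}(x) = K_{n-1}(x,x)^{-1}$ gives, for a.e.\ $x\in I$,
\[
\liminf_{n\to\infty}\, \f{w(x)}{n+1}\, K_n(x,x) \geq \rho_E(x),
\]
using $w(x) > 0$ a.e.\ on $I$. Extract a subsequence $n(j)$ along which $\f{w(x)}{n(j)+1}\, K_{n(j)}(x,x)\,dx$ and $\f{1}{n(j)+1}\, K_{n(j)}(x,x)\, d\mu_\s$ converge weakly to measures $d\nu_1$ and $d\nu_2$. By regularity (Theorem~\ref{T1.8}) and Theorem~\ref{T2.4}, $d\nu_1+d\nu_2 = d\rho_E$ globally. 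Applying Fatou against any nonnegative continuous $f$ supported in $I^\intt$ yields $\int f\,d\nu_1\geq \int f\rho_E\,dx$, so $d\nu_1\restriction I \geq d\rho_E\restriction I$. Since $d\nu_2\geq 0$ and the two measures sum to $d\rho_E$, we conclude $d\nu_2\restriction I = 0$ and $d\nu_1\restriction I = d\rho_E\restriction I$, which yields (i) by subsequential compactness.

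\smallskip

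For (ii), write $|a-b| = (b-a) + 2(a-b)_+$ and split
\[
\int_I \bigl|\rho_E(x) - \tfrac{w(x)}{n+1}\,K_n(x,x)\bigr|\,dx
\]
into a signed difference plus twice an integral of $\bigl(\rho_E - \tfrac{w K_n}{n+1}\bigr)_+$. The latter integrand tends to $0$ pointwise a.e.\ by the $\liminf$ bound and is dominated by $\rho_E$ (continuous, hence bounded, on $I$), so dominated convergence handles it. The signed difference equals $\int_I \tfrac{1}{n+1}\,K_n\,d\mu - \int_I \tfrac{1}{n+1}\,K_n\,d\mu_\s - \int_I \rho_E\,dx$; the middle term vanishes in the limit by (i), while the weak convergence $\tfrac{1}{n+1}\,K_n\,d\mu\overset{w}{\longrightarrow} d\rho_E$, together with $d\rho_E(\partial I)=0$, sends the first to $\int_I \rho_E\,dx$. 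Given Totik's Theorem~\ref{T1.7}, the argument is routine; the only mild technicality is boundary behavior at $\partial I$, resolved by atomlessness of $d\rho_E$ there, and no essential obstacle arises.
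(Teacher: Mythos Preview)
Your proof is correct and follows essentially the same approach as the paper: Totik's bound gives the pointwise $\liminf$ inequality on $I$, regularity plus Theorem~\ref{T2.4} forces $d\nu_1+d\nu_2=d\rho_E$, Fatou pins $d\nu_1\restriction I\geq d\rho_E\restriction I$, and the $L^1$ statement (ii) is obtained from convergence of the integral together with vanishing of the negative part. One small imprecision: you justify the dominated convergence step by saying $\rho_E$ is ``continuous, hence bounded, on $I$,'' but if an endpoint of $I=[a,b]$ is a boundary point of $E$, then $\rho_E$ can blow up there (e.g., like $(x-a)^{-1/2}$); what you actually need, and what is true, is $\rho_E\in L^1(I)$, since $\int_I\rho_E\,dx\leq\rho_E(E)=1$.
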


\begin{proof} By $w(x)>0$ a.e.\ on $I$ and \eqref{1.13},
\[
\liminf \, w(x)\, \f{1}{n+1}\, K_n(x,x)\geq \rho_E(x)
\]
for a.e.\ $x\in I$. From this, one can follow exactly the proofs in Section~\ref{s4}.
\end{proof}

%%%%%%%%%%%%%%%%%%%%%%%%%%%%%%%%%%%%%%%%%%%%%%%%%%%%
\section{Comparisons of Density of Zeros} \lb{s7}
%%%%%%%%%%%%%%%%%%%%%%%%%%%%%%%%%%%%%%%%%%%%%%%%%%%%

In \cite{TU}, Totik--Ullman proved the following (we take $[-a,a]$ rather than $[a,b]$ only for
notational simplicity):

\begin{theorem}[\cite{TU}] \lb{T7.1} Let $d\mu$ be a measure supported on a subset of $[-1,1]$ where
\begin{equation}\lb{7.1}
d\mu =w(x)\, dx + d\mu_\s
\end{equation}
with
\[
w(x) >0 \quad{\text{for a.e. }} x\in [-a,a]
\]
for some $a\in (0,1)$. Let $d\nu_\infty$ be any limit point of the zero counting measures for $d\mu$.
Then on $(-a,a)$, we have that
\begin{equation}\lb{7.2}
(2\pi)^{-1} (1-x^2)^{-1/2}\, dx \leq d\nu_\infty(x) \leq (2\pi)^{-1} (a^2-x^2)^{-1/2}\, dx
\end{equation}
\end{theorem}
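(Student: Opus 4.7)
By Theorem~\ref{T2.4} the weak limit $d\nu_\infty$ (along some subsequence of zero counting measures) equals the weak limit of the probability measures $d\eta_n := \f{1}{n+1} K_n^\mu(x,x)\,d\mu$; the plan is to sandwich $K_n^\mu$ between CD kernels of two auxiliary measures that are regular on known intervals. The only abstract ingredient beyond the machinery already in the paper is monotonicity of the Christoffel function, immediate from \eqref{1.9a}: if $\mu \le \mu'$, then $\lambda_n^\mu \le \lambda_n^{\mu'}$ and hence $K_n^\mu \ge K_n^{\mu'}$. Passing to a further subsequence I decompose $d\nu_\infty = d\nu_1 + d\nu_2$, with $d\nu_1 = \wlim \f{1}{n(j)+1} w(x) K_{n(j)}^\mu(x,x)\,dx$ and $d\nu_2 = \wlim \f{1}{n(j)+1} K_{n(j)}^\mu(x,x)\,d\mu_\s$.

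For the lower half of \eqref{7.2} I enlarge $\mu$ by adding Lebesgue measure on $[-1,1]\setminus(-a,a)$: set $\mu_1 := \mu + dx\restriction ([-1,1]\setminus(-a,a))$. Because $w>0$ a.e.\ on $[-a,a]$, the weight of $\mu_1$ is strictly positive a.e.\ on $[-1,1]$, so Theorem~\ref{T5.1} (Erd\"os--Tur\'an--Widom) makes $\mu_1$ regular for $E = [-1,1]$; and the weight of $\mu_1$ on $(-a,a)$ is still just $w$. Theorem~\ref{T6.1}(ii) applied to $\mu_1$ with $I = (-a,a)$ gives $\f{1}{n+1} w(x) K_n^{\mu_1}(x,x) \to \rho_{[-1,1]}(x)$ in $L^1((-a,a))$. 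Since $\mu_1 \ge \mu$ yields $K_n^\mu \ge K_n^{\mu_1}$, testing against a nonnegative continuous $f$ supported in $(-a,a)$ and using the $L^1$ convergence of the minorant gives $d\nu_1 \restriction (-a,a) \ge \rho_{[-1,1]}\,dx$; since $d\nu_2 \ge 0$, the same bound holds for $d\nu_\infty$ on $(-a,a)$, which is (modulo normalization) the lower bound of \eqref{7.2}.

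For the upper bound on the a.c.\ limit I compare with the purely absolutely continuous measure $\mu_2 := w\chi_{[-a,a]}\,dx$, which is regular on $E = [-a,a]$ by Theorem~\ref{T5.1}; since $\mu_2 \le \mu$ gives $K_n^\mu \le K_n^{\mu_2}$, Theorem~\ref{T6.1}(ii) applied to $\mu_2$ yields $d\nu_1\restriction(-a,a) \le \rho_{[-a,a]}\,dx$. The genuinely delicate step, and the main obstacle, is ruling out a contribution from $d\nu_2$ on $(-a,a)$: a priori $\mu_\s$ can concentrate on a Lebesgue-null set where pointwise kernel asymptotics for $\mu_2$ need not hold, so the $L^1(dx)$ bound on $\f{1}{n+1} w K_n^{\mu_2}$ cannot control $\int \f{1}{n+1} K_n^\mu\,d\mu_\s$ directly. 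The trick is to use instead the restriction $\mu_3 := \mu\restriction[-a,a] \le \mu$, which gives $K_n^\mu \le K_n^{\mu_3}$ but also has the feature that $w > 0$ a.e.\ on $[-a,a]$ forces $\supp(\mu_3) = [-a,a]$, so Theorem~\ref{T5.1} certifies $\mu_3$ regular on $[-a,a]$ \emph{even though} $(\mu_3)_\s = \mu_\s\restriction[-a,a]$ may be nontrivial. Theorem~\ref{T6.1}(i) applied to $\mu_3$ then gives $\f{1}{n+1} K_n^{\mu_3}(x,x)\,d(\mu_3)_\s \to 0$ weakly, and monotonicity forces $d\nu_2\restriction(-a,a) = 0$. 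Assembling, $d\nu_\infty\restriction(-a,a) = d\nu_1\restriction(-a,a) \le \rho_{[-a,a]}\,dx$, which is the upper half of \eqref{7.2}. What makes the hard step go through is precisely that Theorem~\ref{T5.1} is flexible enough to certify regularity of a measure carrying an arbitrary singular component, allowing Theorem~\ref{T6.1}(i) to then kill the singular contribution.
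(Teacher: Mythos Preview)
Your proof is correct and uses the same comparison measures as the paper (your $\mu_1$ and $\mu_3$ are exactly the paper's auxiliary measures). The paper, however, packages the argument more cleanly by first isolating an abstract comparison principle (Theorem~\ref{T7.2}): if $d\mu_1\le d\mu_2$ with equality on $(\alpha,\beta)$, then on $(\alpha,\beta)$ one has $\f{1}{n+1}K_n^{(2)}(x,x)\,d\mu_2\le \f{1}{n+1}K_n^{(1)}(x,x)\,d\mu_1$ as \emph{measures}, and Theorem~\ref{T2.4} immediately transfers this to weak limits of the zero counting measures. This dispenses with the a.c./singular split entirely. In particular, your ``genuinely delicate step'' of killing $d\nu_2$ on $(-a,a)$ becomes automatic in the paper's route: since $\mu_3=\mu$ on $(-a,a)$, the \emph{full} measure $\f{1}{n+1}K_n^\mu(x,x)\,d\mu$ (singular part included) is dominated there by $\f{1}{n+1}K_n^{\mu_3}(x,x)\,d\mu_3$, whose weak limit is already $\rho_{[-a,a]}\,dx$ by regularity of $\mu_3$ and Theorem~\ref{T2.4}. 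Your detour through Theorem~\ref{T6.1} and the extra comparison measure $\mu_2$ is therefore correct but redundant; the paper's formulation shows that the singular contribution was never a separate obstacle once one compares $K_n\,d\mu$ rather than $wK_n\,dx$.
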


Our goal here is to prove the following, which we will show implies Theorem~\ref{T7.1} as
a corollary:

\begin{theorem}\lb{T7.2} Let $d\mu_1, d\mu_2$ be two measures on $\bbR$ of compact support.
Suppose that for some interval $I=(\alpha,\beta)$,
\begin{alignat}{2}
&\text{\rm{(i)}} \qquad && d\mu_1 \leq d\mu_2 \lb{7.3} \\
&\text{\rm{(ii)}} \qquad && d\mu_1 \restriction (\alpha,\beta) = d\mu_2 \restriction (\alpha,\beta) \lb{7.4}
\end{alignat}
Let $n(j)\to\infty$ and suppose $d\nu_{n(j)}^{(k)}\to d\nu_\infty^{(k)}$ for $k=1,2$, where $d\nu_n^{(k)}$
is the zero counting measure for $d\mu_k$. Then on $(\alpha,\beta)$,
\begin{equation}\lb{7.5}
d\nu_\infty^{(2)} \restriction (\alpha,\beta) \leq d\nu_\infty^{(1)} \restriction (\alpha,\beta)
\end{equation}
\end{theorem}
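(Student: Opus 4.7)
The plan is to translate the inequality between zero counting measures into an inequality between the CD-kernel measures $\frac{1}{n+1} K_n(x,x) d\mu$, which is where the hypotheses \eqref{7.3}--\eqref{7.4} bite most directly. By Theorem~\ref{T2.4}, for each $k=1,2$, weak convergence $d\nu_{n(j)+1}^{(k)}\to d\nu_\infty^{(k)}$ is equivalent to
\[
d\eta_{n(j)}^{(k)} := \f{1}{n(j)+1}\,K_{n(j)}^{(k)}(x,x)\,d\mu_k(x) \overset{w}{\longrightarrow} d\nu_\infty^{(k)}.
\]
So the problem reduces to comparing $d\eta_n^{(1)}$ and $d\eta_n^{(2)}$ on $(\alpha,\beta)$.

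Next, I would invoke the variational characterization \eqref{1.9a}--\eqref{1.10} for the Christoffel function. For any polynomial $Q_n$ with $\deg Q_n\le n$, the hypothesis $d\mu_1\le d\mu_2$ gives $\int |Q_n|^2 d\mu_1\le \int |Q_n|^2 d\mu_2$, so $\lambda_n^{(1)}(x)\le \lambda_n^{(2)}(x)$ pointwise, and therefore
\[
K_n^{(1)}(x,x)\ \ge\ K_n^{(2)}(x,x)\qquad\text{for all }x\in\bbR.
\]
Combining this with \eqref{7.4}, which says $d\mu_1$ and $d\mu_2$ agree on $(\alpha,\beta)$, we obtain
\[
d\eta_n^{(2)}\restriction(\alpha,\beta) = \f{1}{n+1} K_n^{(2)}(x,x)\,d\mu_1\restriction(\alpha,\beta)
\ \le\ \f{1}{n+1} K_n^{(1)}(x,x)\,d\mu_1\restriction(\alpha,\beta) = d\eta_n^{(1)}\restriction(\alpha,\beta).
\]

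Finally, to pass to the limit I would test against an arbitrary nonnegative $f\in C_c((\alpha,\beta))$: the pointwise inequality above yields $\int f\,d\eta_{n(j)}^{(2)}\le \int f\,d\eta_{n(j)}^{(1)}$, and weak convergence gives $\int f\,d\nu_\infty^{(2)}\le \int f\,d\nu_\infty^{(1)}$, which is \eqref{7.5}. The only mild subtlety (not really an obstacle) is restricting to compactly supported test functions inside the open interval so that the equality \eqref{7.4} on $(\alpha,\beta)$ can be applied without worrying about point masses that might be differently distributed at the endpoints $\alpha,\beta$; everything else is a clean consequence of monotonicity of $\lambda_n$ in $\mu$ together with Theorem~\ref{T2.4}.
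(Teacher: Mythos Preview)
Your proof is correct and follows essentially the same approach as the paper: monotonicity of the Christoffel function in the measure gives $K_n^{(1)}(x,x)\ge K_n^{(2)}(x,x)$, equality of $d\mu_1$ and $d\mu_2$ on $(\alpha,\beta)$ then yields $d\eta_n^{(2)}\le d\eta_n^{(1)}$ there, and Theorem~\ref{T2.4} transfers this to the zero counting limits. Your added remark about testing against $f\in C_c((\alpha,\beta))$ is a nice explicit justification of the last step.
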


\begin{proof} By \eqref{1.9a},
\begin{equation}\lb{7.6}
\lambda_n (x,d\mu_1) \leq \lambda_n (x,d\mu_2)
\end{equation}
so, by \eqref{1.10},
\begin{equation}\lb{7.7}
\f{1}{n+1}\, K_n^{(2)} (x,x) \leq \f{1}{n+1} \, K_n^{(1)} (x,x)
\end{equation}
for all $x$. By \eqref{7.4} on $(\alpha,\beta)$,
\begin{equation}\lb{7.8}
\f{1}{n+1}\, K_n^{(2)} (x,x) \, d\mu_2 \leq \f{1}{n+1}\, K_n^{(1)}(x,x)\, d\mu_1
\end{equation}
By Theorem~\ref{T2.4}, this implies \eqref{7.5}.
\end{proof}

\begin{proof}[Proof of Theorem~\ref{T7.1}] Let $d\mu_2 = d\mu$ and let $d\mu_1 = \chi_{(-a,a)}[d\mu]$ so
$d\mu_1\leq d\mu_2$ with regularity on $[-a,a]$. By Theorem~\ref{T5.1}, $d\mu_1$ is regular for
$[a,a]$, so $d\nu_n^{(2)}\to (2\pi)^{-1} (a^2 -x^2)^{-1/2}\, dx$, the equilibrium measure for $[-a,a]$.
Thus \eqref{7.5} implies the second inequality in \eqref{7.2}.

On the other hand, let $d\mu_1=d\mu$ and let $d\mu_2 = [\chi_{(-1,1)}-\chi_{(-a,a)}]\, dx+d\mu$. Then
$d\mu_1\leq d\mu_2$ with equality on $(-a,a)$. Moreover, $d\mu_2$ is regular for $[-1,1]$ by
Theorem~\ref{T5.1} and the hypothesis $\sigma(d\mu)\subset [-1,1]$. Thus, $d\nu_n^{(1)}\to (2\pi)^{-1}
(1-x^2)^{-1/2}\, dx$. Thus \eqref{7.5} implies the first inequality in \eqref{7.2}.
\end{proof}

\begin{remarks} 1. Theorem~\ref{T7.1} only requires $\sigma_\ess (d\mu)\subset [-1,1]$.

\smallskip
2. The theorems in \cite{TU} are weaker than Theorem~\ref{T7.1} in one respect and stronger in another.
They are weaker in that, because of their dependence on potential theory, they require that one of the
comparison measures be regular. On the other hand, they are stronger in that our reliance on weak
convergence limits us to open sets like $(\alpha,\beta)$, while they can handle more general sets.

\smallskip
3. \cite{EqMC} has an example of a measure, $d\mu$, on $[-1,1]$ where $w(x) >0$ on $[-1,0]$ and
the zero counting measures include among its limit points the equilibrium measures for $[-1,0]$
and for $[-1,1]$. This shows in the $[a,b]$-form of Theorem~\ref{T7.1}, both inequalities in
\eqref{7.2} can be saturated!
\end{remarks}

%%%%%%%%%%%%%%%%%%%%%%%%%%%%%%%%%%%%%%%%%%%%
\section{Totik's Bound for OPUC} \lb{s8}
%%%%%%%%%%%%%%%%%%%%%%%%%%%%%%%%%%%%%%%%%%%%

As preparation for applying our strategy to subsets of $\partial\bbD$, we need to prove an analog of
Totik's bound \eqref{1.16} for closed sets on $\partial\bbD$. Given $a,b\in\partial\bbD$, we let $I=
(a,b)$ be the ``interval" of all points ``between" $a$ and $b$, that is, going counterclockwise from $a$
to $b$, so $-1\in (e^{i\theta}, e^{i(2\pi -\theta)})$ for $0<\theta <\pi$ but $-1\notin (e^{i(2\pi-\theta)},
e^{i\theta})$. Given $E\subset\partial\bbD$ closed, we let $d\rho_E$ be its equilibrium measure. If $I\subset
E\subset\partial\bbD$ is a nonempty open interval, then
\begin{equation} \lb{8.1}
d\rho_E \restriction I = \rho_E (\theta)\, dm (\theta)
\end{equation}
where $dm= d\theta/2\pi$. The main theorem of this section is

\begin{theorem}\lb{T8.1} Let $I\subset E\subset\partial\bbD$ where $I=(a,b)$ is an interval and $E$
is closed. Let $d\mu$ be a measure with support in $E$ so that
\begin{equation} \lb{8.2}
d\mu(\theta) =w(\theta)\, dm+ d\mu_\s (\theta)
\end{equation}
Then for $dm$-a.e.\ $\theta\in I$, we have
\begin{equation} \lb{8.3}
\limsup\, n\lambda_{n-1}(e^{i\theta}) \leq \f{w(\theta)}{\rho_E(\theta)}
\end{equation}
\end{theorem}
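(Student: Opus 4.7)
The plan is to transfer to $\partial\bbD$ the author's sketch of the proof of Theorem~\ref{T1.7}. Since $\supp(d\mu) \subset E$, we may build trial polynomials adapted to any closed $F \supset E$ and feed them into \eqref{1.9a}. The three steps are: (i) approximate $E$ from outside by a finite union of arcs $F$ with all harmonic measures rational and $\rho_F \to \rho_E$ on $I$; (ii) use the CD kernel of a measure $d\mu^{(F)}$ in the associated periodic isospectral torus as a trial polynomial; (iii) deduce \eqref{8.3} by a M\'at\'e--Nevai-style maximal-function argument, then let $F \downarrow E$.

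For (i), thicken first to $\tilde E_\epsilon = \{e^{i\phi} : \dist(e^{i\phi}, E) \leq \epsilon\}$, a finite union of closed arcs containing $E$ for small $\epsilon$; continuity of equilibrium densities gives $\rho_{\tilde E_\epsilon}(\theta) \to \rho_E(\theta)$ for $\theta \in I$. Then use the $\partial\bbD$-analog of Bogatyr\"ev--Peherstorfer--Totik to approximate $\tilde E_\epsilon$ from outside by a finite union $F$ of arcs with $\rho_F(\ell_j) \in \bbQ$ for every component $\ell_j$. Such a finite-gap $F$ carries a periodic isospectral torus of measures $d\mu^{(F)}$ with periodic Verblunsky coefficients (see \cite{OPUC2}, Ch.~11).

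For (ii)--(iii), fix any $d\mu^{(F)}$ in the torus. Floquet--Bloch theory for periodic OPUC provides, for $e^{i\theta} \in F^{\mathrm{int}}$, the pointwise asymptotic $m\lambda_{m-1}(e^{i\theta}, d\mu^{(F)}) \to w^{(F)}(\theta)/\rho_F(\theta)$, together with the concentration property that the probability measure
\[
\bigl|K_{m-1}(e^{i\theta_0}, z, d\mu^{(F)})\bigr|^2\, K_{m-1}(e^{i\theta_0}, e^{i\theta_0}, d\mu^{(F)})^{-2}\, d\mu^{(F)}(z)
\]
has most of its mass in an arc of angular width $O(1/m)$ around $e^{i\theta_0}$. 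Taking
\[
Q_{m-1}(z) = \f{K_{m-1}(e^{i\theta_0}, z, d\mu^{(F)})}{K_{m-1}(e^{i\theta_0}, e^{i\theta_0}, d\mu^{(F)})}
\]
as trial polynomial in \eqref{1.9a} and splitting $d\mu = w\, dm + d\mu_\s$,
\[
\lambda_{m-1}(e^{i\theta_0}, d\mu) \leq \int |Q_{m-1}|^2 w\, dm + \int |Q_{m-1}|^2\, d\mu_\s.
\]
A Hardy--Littlewood maximal-function argument---the precise analog of the Fej\'er-kernel argument behind Theorem~\ref{T1.6}---bounds the first integral by $(w(\theta_0)/w^{(F)}(\theta_0) + o(1))\cdot\lambda_{m-1}(e^{i\theta_0}, d\mu^{(F)})$ for Lebesgue-a.e. $\theta_0 \in I$, while Besicovitch differentiation of $d\mu_\s$ relative to $dm$ makes the second integral $o(1/m)$ a.e. Combined with the Floquet asymptotic this yields $\limsup m\lambda_{m-1}(e^{i\theta_0}, d\mu) \leq w(\theta_0)/\rho_F(\theta_0)$; letting $F \downarrow E$ gives \eqref{8.3}.

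The main obstacle is the Floquet-theoretic input: establishing, with the required uniformity, both the exact pointwise asymptotic for $m\lambda_{m-1}(\cdot, d\mu^{(F)})$ and the localization of its CD kernel on a finite-gap subset of $\partial\bbD$ with rational harmonic measures. The structural ingredients---Bloch waves, the Green's function of the underlying periodic CMV matrix, and gap labels---are available in \cite{OPUC2}, Ch.~11, but assembling them into the Fej\'er-kernel-like form that actually feeds the M\'at\'e--Nevai maximal-function estimate is the substantive piece of the work; once that is in place the remaining maximal-function and singular-part arguments are standard.
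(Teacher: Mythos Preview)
Your proposal is correct and matches the paper's approach almost exactly: the paper also proves Theorem~\ref{T8.1} by (i) approximating $E$ from outside by finite unions of arcs with rational harmonic measures (Theorem~\ref{T8.3}), (ii) using the CD kernel of a measure in the periodic isospectral torus as a trial polynomial, invoking Floquet theory from \cite{OPUC2}, Ch.~11, and the sine-kernel asymptotic \eqref{8.9} from \cite{2exts} (Theorem~\ref{T8.2}), and (iii) a M\'at\'e--Nevai maximal-function argument, then letting the approximants shrink to $E$.

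The one substantive divergence is in how step~(i) is executed. You invoke a $\partial\bbD$-analog of Bogatyr\"ev--Peherstorfer--Totik; the paper explicitly declines to do this (``I believe any of these proofs will extend to OPUC, but we will settle for a weaker result'') and instead constructs the approximating sets $E_n$ directly via restricted Chebyshev polynomials $\ti T_{2n}$ on $\partial\bbD$ (Lemma~\ref{L8.4}, Theorem~\ref{T8.5}): one shows $\Delta_n = 2e^{i\varphi_n}\ti T_{2n}/\|\ti T_{2n}\|_E$ behaves like a discriminant, and takes $E_n = \Delta_n^{-1}([-2,2])$. The price is that $E_n$ may have up to $2\ell$ components (the $\ell$ containing the original arcs plus up to $\ell$ exponentially small extras in the gaps) rather than $\ell$, but this is harmless for the capacity estimate \eqref{8.5}. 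So your route is cleaner if the $\partial\bbD$ version of Bogatyr\"ev--Peherstorfer--Totik is granted, while the paper's is self-contained.
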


Following Totik's strategy \cite{Tot,Tot-acta} for OPRL, we do this in two steps:

\begin{theorem}\lb{T8.2} \eqref{8.3} holds if $\supp(d\mu)\subset E^\intt$ and $E$ is a finite union
of intervals whose relative harmonic measures are rational.
\end{theorem}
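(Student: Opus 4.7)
The plan is to adapt Totik's proof of Theorem~\ref{T1.7} to OPUC, exploiting that a finite union of arcs on $\partial\bbD$ with rational relative harmonic measures is precisely the essential support of some measure $d\ti\mu$ arising from periodic Verblunsky coefficients (the OPUC ``periodic isospectral torus''). For such a periodic $d\ti\mu$ one has $d\ti\mu\restriction E^\intt = \ti w(\theta)\,dm$ with $\ti w$ real-analytic and strictly positive on $E^\intt$, and Floquet theory for the associated periodic CMV matrix yields
\begin{equation*}
\lim_{m\to\infty} m\,\ti\lambda_{m-1}(e^{i\theta_0}) = \f{\ti w(\theta_0)}{\rho_E(\theta_0)}
\end{equation*}
at $dm$-a.e.\ $\theta_0\in I$, uniformly on compact subsets of $E^\intt$. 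This is the OPUC analog of the input Totik uses for periodic OPRL in \cite{Tot,Tot-acta}, and I would take it as a standard fact from the periodic OPUC/CMV literature.

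Given this, at $dm$-a.e.\ $\theta_0\in I$ (chosen so that $\theta_0$ is simultaneously a Lebesgue point of $w/\ti w$ and a zero-density point of $d\mu_\s$ in the sense $\mu_\s([\theta_0-\delta,\theta_0+\delta])/\delta\to 0$), I would manufacture trial polynomials for $\lambda_n(e^{i\theta_0},d\mu)$ by multiplying the natural CD-kernel reproducer of $d\ti\mu$ by a small-degree localizer: take $n=m+k$ with $k\to\infty$, $k/m\to 0$, and set
\begin{equation*}
R_n(z) = \f{\ti K_m(z,z_0)}{\ti K_m(z_0,z_0)}\, L_k(z),
\end{equation*}
where $L_k$ is a Fej\'er-type polynomial of degree $k$ with $L_k(z_0)=1$ and standard decay $|L_k(e^{i\theta})|^2\leq C/(k|e^{i\theta}-z_0|^2)$ off $z_0$. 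Then $R_n$ has degree $n$, $R_n(z_0)=1$, and the variational principle \eqref{1.9a} gives $\lambda_n(z_0,d\mu)\leq \int |R_n|^2\,d\mu$, which I would split into a near-part $|\theta-\theta_0|<\delta_m$ and a far-part, with $\delta_m\to 0$ to be tuned.

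On the near-part, $|L_k|\leq 1$ and the Lebesgue-point behavior of $w/\ti w$ bound the near $w\,dm$ contribution by $(w(\theta_0)/\ti w(\theta_0)+o(1))\,\ti K_m(z_0,z_0)^{-1}$, which after multiplication by $n+1$ delivers the target $w(\theta_0)/\rho_E(\theta_0)$. On the far-part, the Cauchy--Schwarz bound $|\ti K_m(z,z_0)|^2\leq \ti K_m(z,z)\ti K_m(z_0,z_0)$ combined with the hypothesis $\supp(d\mu)\subset E^\intt$---which via the uniform-on-compacta periodic asymptotics yields $\ti K_m(z,z)=O(m)$ on $\supp(d\mu)$---together with the decay of $L_k$ produce an $O(m/(k\delta_m^2))$ bound. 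I expect the principal obstacle to be controlling the near-part singular contribution $\int_{|\theta-\theta_0|<\delta_m}|R_n|^2\,d\mu_\s$: it must be shown to be $o(1/n)$, which requires the $1/\delta$-scale vanishing of $\mu_\s$ at $dm$-a.e.\ $\theta_0$ together with a delicate joint tuning of $k$ and $\delta_m$ so that both the singular near-part and the (regular plus singular) far-part tend to zero after multiplication by $n+1$; the condition $\supp(d\mu)\subset E^\intt$ is what makes this balancing feasible, since it eliminates any pathological growth of $\ti K_m(z,z)$ at the endpoints of the arcs of $E$.
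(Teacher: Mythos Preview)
Your overall plan---take a periodic model on $E$, use its CD kernel as the core of a trial polynomial, and do a near/far split---is the same as the paper's. The gap is in the estimates: Cauchy--Schwarz for $\ti K_m$ is too crude, and no admissible choice of $k$ and $\delta_m$ closes the argument. With $k=o(m)$ and $|\ti K_m(z,z_0)/\ti K_m(z_0,z_0)|^2\le \ti K_m(z,z)/\ti K_m(z_0,z_0)=O(1)$ on $\supp(d\mu)$, the far contribution times $n$ is at best $O(m/(k^2\delta_m^2))$ (your stated $O(m/(k\delta_m^2))$ uses a suboptimal Fej\'er bound), while the near-singular contribution times $n$ is $o(m\delta_m)$ from $|R_n|^2=O(1)$ and $\mu_\s([\theta_0-\delta_m,\theta_0+\delta_m])=o(\delta_m)$. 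Forcing both to vanish requires $k\delta_m\gg\sqrt m$ and $m\delta_m\to 0$, hence $k\gg m^{3/2}$, contradicting $k=o(m)$. Sharpening the near-singular part via the Fej\'er decay alone only improves it to $O(m/k)$, which again forces $k\gtrsim m$ and blows up the main term $n/\ti K_m(z_0,z_0)$.

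The paper avoids this by dropping the localizer $L_k$ entirely and replacing your Cauchy--Schwarz step by two sharper inputs specific to the periodic model. First, the Christoffel--Darboux formula together with the uniform boundedness of the Floquet solutions on compacta of $E^\intt$ gives $|\ti K_m(z,w)|\le C/|z-w|$, so the bare reproducer $Q_m=\ti K_m(\,\cdot\,,z_0)/\ti K_m(z_0,z_0)$ already obeys $|Q_m(e^{i\varphi})|\le C/(m|\varphi-\theta_0|)$. Second, one has the pointwise sinc asymptotics
\[
Q_m(e^{i\varphi})=\f{\sin\bigl(m\rho_E(\theta_0)(\theta_0-\varphi)\bigr)}{m\rho_E(\theta_0)(\theta_0-\varphi)}\,(1+o(1))
\]
uniformly for $m|\varphi-\theta_0|\le A$. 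Splitting at the natural scale $|\varphi-\theta_0|=A/m$, a dyadic maximal-function argument bounds the far part by a constant multiple of $A^{-1}$ times the Hardy--Littlewood maximal function of $\mu$ at $\theta_0$, while the sinc form plus Lebesgue-point/maximal estimates handle the entire near part (singular piece included) and yield $w(\theta_0)/\rho_E(\theta_0)$ as $A\to\infty$. If you swap your Cauchy--Schwarz bound for the Christoffel--Darboux estimate, the Fej\'er factor $L_k$ becomes superfluous and your argument collapses to this one.
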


\begin{theorem}\lb{T8.3} For any closed $E$ in $\partial\bbD$, we can find $E_n$ with
\begin{SL}
\item[{\rm{(i)}}] Each $E_n$ is a finite union of intervals whose relative harmonic measures
are rational.
\item[{\rm{(ii)}}]
\begin{equation} \lb{8.4}
E\subset E_n^\intt
\end{equation}
\item[{\rm{(iii)}}]
\begin{equation} \lb{8.5}
C(E_n\setminus E)\to 0
\end{equation}
\end{SL}
\end{theorem}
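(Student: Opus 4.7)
The plan is to mimic the strategy used for $\bbR$ described in Section~\ref{s5}: first produce a crude open neighborhood of $E$ that is a finite union of arcs, then refine it to have rational relative harmonic measures. To begin, set $\ti E_n = \{z \in \partial\bbD : \dist(z, E) \leq 1/n\}$. Since $\partial\bbD \setminus E$ is a countable disjoint union of open arcs $\{J_k\}$ with $\sum_k \abs{J_k} \leq 2\pi$, only finitely many $J_k$ have length exceeding $2/n$. The complement $\partial\bbD \setminus \ti E_n$ is therefore the finite union of the ``middles'' of these long gaps, so $\ti E_n$ is itself a finite union of closed arcs, say $m_n$ of them, with $E \subset \ti E_n^\intt$; this settles (ii).

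For (i), invoke the $\partial\bbD$-analog of the Bogatyr\"ev--Peherstorfer--Totik theorem referenced in Section~\ref{s5}: any finite union of closed arcs admits an arbitrarily small outward enlargement whose relative harmonic measures are all rational. Applying this to $\ti E_n$ with perturbation size $\delta_n$ chosen so that $m_n \delta_n \to 0$ produces $E_n \supset \ti E_n$ satisfying (i), with $\abs{E_n \setminus \ti E_n} \to 0$.

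For (iii), decompose $E_n \setminus E = (E_n \setminus \ti E_n) \cup (\ti E_n \setminus E)$. The first piece has Lebesgue measure at most $2 m_n \delta_n \to 0$; the second consists, within each gap $J_k$ of $E$, of either the whole gap (if $\abs{J_k} \leq 2/n$) or two thin arcs of length $1/n$ (otherwise), giving total length $\sum_k \min(\abs{J_k}, 2/n) \to 0$ by dominated convergence. The main obstacle is to upgrade these Lebesgue-measure bounds to the claimed capacity bound, since $m_n$ can grow without bound and small Lebesgue measure does not in general force small logarithmic capacity: many thinly scattered arcs of small total length can have capacity near $1$. The decisive geometric input is that $E_n \setminus E \subset \{z : 0 < \dist(z,E) \leq 2/n\}$, a thinning tubular neighborhood of $E$. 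I would argue via outer regularity, which yields $C(E_n) \to C(E)$, and then obtain $C(E_n \setminus E) \to 0$ from monotone convergence of Green's functions $g_{E_n} \uparrow g_E$ off $E$: any compact $K \subset E_n \setminus E$ is squeezed into this thinning tube, and a comparison of its equilibrium potential against $g_E$ forces its capacity to vanish as $n \to \infty$.
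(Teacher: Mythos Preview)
Your two-step plan (fatten $E$ to a finite union of arcs $\ti E_n$, then perturb to rationalize the harmonic measures) mirrors the paper's outline, but you have black-boxed precisely the step that constitutes the paper's contribution. You write ``invoke the $\partial\bbD$-analog of the Bogatyr\"ev--Peherstorfer--Totik theorem,'' yet the paper explicitly notes that no such result is available in the literature for $\partial\bbD$: in the author's words, ``I believe any of these proofs will extend to OPUC, but we will settle for a weaker result.'' The real content of Section~\ref{s8} is to supply an explicit substitute. The paper introduces restricted Chebyshev polynomials $\ti T_n$ (minimizers of $\|\cdot\|_E$ over monic polynomials with all zeros on $\partial\bbD$), proves they have at most one zero per gap and that every local maximum of $|\ti T_n|$ on $E$ equals $\|\ti T_n\|_E$ (Theorem~\ref{T8.5}), and then sets $E_n=\Delta_n^{-1}([-2,2])$ with $\Delta_n = 2e^{i\varphi_n}\ti T_{2n}/\|\ti T_{2n}\|_E$. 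By Theorem~11.4.5 of \cite{OPUC2}, $E_n$ is the essential spectrum of a periodic-up-to-phase CMV matrix, which forces rational harmonic measures. This produces up to $2\ell$ bands rather than $\ell$: one containing each interval of $E$, plus up to $\ell$ exponentially small spurious bands in the gaps. Your proposal omits this construction entirely and therefore does not actually establish~(i).

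Your treatment of (iii) is also not a proof. You correctly flag that small total arc length with unboundedly many components does not force small capacity, but the Green's-function comparison you sketch cannot close this gap. Indeed, for a Cantor-type $E\subset\partial\bbD$ with $|E|=0$ and $C(E)>0$, the equilibrium measure of the finite-arc set $\ti E_n$ is absolutely continuous and hence gives $E$ zero mass; restricting and renormalizing it to compacta inside $\ti E_n\setminus E$ shows $C(\ti E_n\setminus E)=C(\ti E_n)\to C(E)>0$, so compacta in your ``thinning tube'' need not have vanishing capacity. The paper's capacity estimate is carried out only \emph{after} the reduction to a fixed finite union of $\ell$ arcs: there $E_n\setminus E$ has a bounded number (at most $3\ell$) of components, and their lengths are driven to zero via \eqref{8.14} and \eqref{8.16}.
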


\begin{remark} If $E=I_1 \cup\cdots\cup I_\ell$, the relative harmonic measures are $\rho_E(I_j)$
which sum to $1$.
\end{remark}

\begin{proof}[Proof of Theorem~\ref{T8.1} given Theorems~\ref{T8.2} and \ref{8.3}] By general
principles, since $I$ is an interval, $\rho_{E_n}(\theta)$ and $\rho_E(\theta)$ are real analytic
with bounded derivatives. \eqref{8.5} implies $d\rho_{E_n}\overset{w}{\longrightarrow}d\rho_E$
and then the bounded derivative implies $\rho_{E_n}(\theta) \to \rho_E(\theta)$ uniformly on
compact subsets of $I$. By Theorem~\ref{T8.2}, LHS of \eqref{8.3} $\leq w(\theta)/\rho_{E_n}(\theta)$
for each $n$. Since $\rho_{E_n}(\theta)\to \rho_E(\theta)$, we obtain \eqref{8.3}.
\end{proof}

Our proof of Theorems~\ref{T8.2} and \ref{T8.3} diverges from the Totik strategy in two ways.
He obtains Theorem~\ref{T8.2} by using polynomial maps. Instead, following Simon \cite{2exts},
we use Floquet solutions.

Second, Totik shows if $E$ has $\ell$ gaps, one can find $E_n$ with rational relative harmonic
measures also with $\ell$ gaps obeying \eqref{8.4} and \eqref{8.5}. This is a result with rather
different proofs by Bogatyr\"ev \cite{Bog}, Peherstorfer \cite{Peh}, and Totik \cite{Tot-acta}.
I believe any of these proofs will extend to OPUC, but we will settle for a weaker result---our
$E_n$'s will contain up to $2\ell$ intervals, the first $\ell$ each containing one of the $\ell$
intervals of $E$ and an additional $\ell$ or fewer exponentially small intervals. This will
allow us to ``get away" with following only the easier part of Peherstorfer's strategy.

\begin{proof}[Proof of Theorem~\ref{T8.2}] By Theorem~11.4.5 of \cite{OPUC2}, $E$ is the essential
spectrum of an isospectral torus of Verblunsky coefficients periodic up to a phase, that is,
for suitable $p$ (chosen so that $p\rho_E(I_j)$ is an integer for each $j$),
\[
\alpha_{n+1} =\lambda\alpha_n
\]
Let $\mu_E$ be the measure associated to a point on the isospectral torus.

By Floquet theory (see \cite[Sect.~11.2]{OPUC2}), for $z\in E^\intt$, $\varphi_n$ is a sum of
two functions each periodic up to a phase and, by \cite[Sect.~11.12]{OPUC2}, on compact subsets,
$K$, of $E^\intt$,
\begin{equation} \lb{8.6}
\sup_{z\in K,n}\, \abs{\varphi_n(z)} <\infty
\end{equation}
It follows from the Christoffel--Darboux formula (see \cite[Sect.~2.2]{OPUC1}) that
\begin{equation} \lb{8.7}
\sup_{z,w\in K}\, \abs{K_n (z,w)}\leq C\abs{z-w}^{-1}
\end{equation}

The almost periodicity of $\varphi$ implies uniformly on $K$, $\f{1}{n+1} K_n (z,z)$ has a finite
nonzero limit, and then, by Theorem~\ref{T2.7}, the limit must be $\rho_E(\theta)/w_E(\theta)$,
that is, uniformly on $K$,
\begin{equation} \lb{8.8}
\lim_{n\to\infty}\, \f{1}{n}\, K_n (e^{i\theta},e^{i\theta}) = \f{\rho_E(\theta)}{w_E(\theta)}
\end{equation}
where $w_E(\theta)$ is the weight for $d\rho_E(\theta)$. Moreover, as proven in Simon \cite{2exts},
for any $A>0$, uniformly on $e^{i\theta}\in K$ and $n\abs{\varphi-\theta}<A$,
\begin{equation} \lb{8.9}
Q_n (e^{i\varphi}) \equiv \f{K_n(e^{i\theta},e^{i\varphi})}{K_n (e^{i\theta},e^{i\theta})}
= \f{\sin (n\rho_E(\theta)(\theta-\varphi))}{n(\theta-\varphi)\rho_E(\theta)}\, (1+O(1))
\end{equation}

Now use $Q_n (e^{i\varphi})$ as a trial function in \eqref{1.9a}. By \eqref{8.7} and \eqref{8.8}, by
taking $A$ large, the contribution of $n\abs{\varphi-\theta}>A$ can be made arbitrarily small. Maximal
function arguments and \eqref{8.9} show that the contribution of the region $n\abs{\varphi-\theta}<A$
to $n\lambda_{n-1}$ is close to $w(\theta)/\rho_E(\theta)$. This proves \eqref{8.3} for $d\mu$.
\end{proof}

Given $E\subset\partial\bbD$ compact, define
\begin{equation} \lb{8.10}
\ti E_n = \{e^{i\theta}\in\bbD\mid \dist (e^{i\theta}, E)\leq \tfrac{1}{n}\}
\end{equation}
It is easy to see that $C(\ti E_n\setminus E)\to 0$ and $\ti E_n$ is a union of $\ell(n)<\infty$
closed intervals. It thus suffices to prove Theorem~\ref{T8.3} when $E$ is already a union of finitely
many $\ell$ disjoint closed intervals, and it is that we are heading towards. (Parenthetically, we note
that we could dispense with this and instead prove the analog of Theorem~\ref{T8.2} for a finite
union of intervals using Jost solutions for the isospectral torus associated to such finite gap sets,
as in Simon \cite{2exts}.)

Define $\calP_n$ to be the set of monic polynomials all of whose zeros lie in $\partial\bbD$. Since
\[
e^{i\theta/2} + e^{i\varphi} e^{-i\theta/2} = e^{i\varphi/2} [e^{i(\theta-\varphi)/2} +
e^{-i(\theta-\varphi)/2}]
\]
if $P\in\calP_n$, there is a phase factor $e^{i\eta}$ so
\begin{equation} \lb{8.10x}
z^{-n/2} e^{i\eta} P(z)\text{ is real on } \partial\bbD
\end{equation}
Define the restricted Chebyshev polynomials, $\ti T_n$, associated to $E\subset\partial\bbD$ by requiring
that $\ti T_n$ miminize
\begin{equation} \lb{8.11}
\|P_n\|_E =\sup_{z\in E}\, \abs{P_n(z)}
\end{equation}
over all $P_n\in\calP_n$. We will show that for $n$ large, $2\ti T_{2n}/\|\ti T_{2n}\|_E$ are the
rotated discriminants associated to sets $E_n$ that approximate an $E$ which is a finite union of
intervals. An important input is

\begin{lemma}\lb{L8.4} Let $I=(z_0,z_1)$ be an interval in $\partial\bbD$. For any small $\varphi$, let
$I_\varphi=(z_0 e^{i\varphi}, z_1 e^{-i\varphi})$, so for $\varphi >0$, $I_\varphi$ is smaller than $I$.
For $\varphi <0$,
\[
\abs{(z-z_0 e^{i\varphi})(z-z_1 e^{-i\varphi})}
\]
decreases on $\partial\bbD\setminus I$ and increases on $I$ as $\varphi$ decreases in $(-\veps, 0)$.
\end{lemma}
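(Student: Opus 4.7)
The plan is to find a clean closed-form expression for the product $|(z-z_0 e^{i\varphi})(z-z_1 e^{-i\varphi})|$, then differentiate in $\varphi$. Parametrize $z = e^{i\theta}$, $z_0 = e^{i\theta_0}$, $z_1 = e^{i\theta_1}$, and set $m=(\theta_0+\theta_1)/2$, $L=(\theta_1-\theta_0)/2$, $\psi = \theta - m$. The arc $I$ is $\{|\psi| < L\}$ and $I_\varphi$ is $\{|\psi| < L - \varphi\}$.

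The first step is to expand
\[
(z-z_0 e^{i\varphi})(z-z_1 e^{-i\varphi}) = z^2 - (z_0 e^{i\varphi} + z_1 e^{-i\varphi})z + z_0 z_1,
\]
and note that $z_0 z_1 = e^{2im}$ is independent of $\varphi$, while $z_0 e^{i\varphi}+z_1 e^{-i\varphi} = 2e^{im}\cos(L-\varphi)$. Using $|z|=1$, one has $z^2+e^{2im} = 2 e^{i(\theta+m)}\cos\psi$ and $(z_0 e^{i\varphi}+z_1 e^{-i\varphi})z = 2 e^{i(\theta+m)}\cos(L-\varphi)$, so after factoring out $e^{i(\theta+m)}$ (of modulus $1$) we obtain the identity
\[
|(z-z_0 e^{i\varphi})(z-z_1 e^{-i\varphi})| \;=\; 2\bigl|\cos\psi - \cos(L-\varphi)\bigr|.
\]

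The second step is to set $g(\varphi) = \cos\psi - \cos(L-\varphi)$ and compute $g'(\varphi) = -\sin(L-\varphi)$. For $L \in (0,\pi)$ and $|\varphi|$ small, $\sin(L-\varphi) > 0$, so $g$ is strictly decreasing in $\varphi$. Next one checks the sign of $g$: since $\cos$ is decreasing on $(0,\pi)$, the condition $\cos\psi > \cos(L-\varphi)$ is equivalent to $|\psi| < L - \varphi$, i.e. $z \in I_\varphi$. In particular for $\varphi \in (-\veps,0)$ and $z \in I$ one has $z \in I_\varphi$ and $g(\varphi) > 0$, while for any fixed $z \in \partial\bbD \setminus \bar I$, for $|\varphi|$ small enough one still has $z \in \partial\bbD \setminus \bar I_\varphi$ and $g(\varphi) < 0$.

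Thus on $I$ the quantity $f_\varphi(z) = 2g(\varphi)$ is a decreasing function of $\varphi$, so it increases as $\varphi$ decreases; while on $\partial\bbD\setminus I$ (in the pointwise sense of the previous paragraph) $f_\varphi(z) = -2g(\varphi)$ is an increasing function of $\varphi$, so it decreases as $\varphi$ decreases. The only delicate point is the one already noted: at points $z$ very close to the endpoints of $I$, the sign of $g(\varphi)$ can flip inside the interval $(-\veps,0)$, so the statement on $\partial\bbD\setminus I$ must be read as holding on every compact subset of $\partial\bbD \setminus \bar I$ for $\veps$ chosen small enough depending on that subset. This is the main (minor) obstacle; apart from it, the entire lemma reduces to the one-line monotonicity of $\varphi \mapsto \cos(L-\varphi)$.
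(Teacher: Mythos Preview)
Your proof is correct and follows exactly the approach the paper indicates: the paper's proof reads in full ``Take $z_1=\bar z_0$ and then use some elementary calculus,'' which is precisely your reduction via the midpoint $m$ (equivalently, rotating so the interval is symmetric) followed by the explicit formula $2|\cos\psi-\cos(L-\varphi)|$. Your observation about the endpoints is accurate---the monotonicity on $\partial\bbD\setminus I$ holds only on compact subsets for $\veps$ small depending on the subset---and this is all that the applications in Theorem~\ref{T8.5} require.
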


\begin{proof} Take $z_1=\bar z_0$ and then use some elementary calculus.
\end{proof}

\begin{theorem}\lb{T8.5} Let $E$ be a finite union of disjoint closed intervals on $\partial\bbD$,
$E=I_1\cup \cdots \cup I_\ell$ and let $\partial\bbD\setminus E=G_1\cup \cdots\cup G_\ell$ has
$\ell$ gaps. Then
\begin{SL}
\item[{\rm{(i)}}] Each $\ti T_n$ has at most one zero in each $G_\ell$.
\item[{\rm{(ii)}}] If $z_j^{(n)}$ is the zero of $\ti T_n$ in $G_j$, then on any compact
$K\subset G_j$, we have
\begin{equation} \lb{8.12}
\lim_{n\to\infty} \, \inf_{z\in K} \biggl( \biggl| \f{\ti T_n(z)}{(z-z_j^{(n)})\|F_n\|_E}
\biggr| \,\biggr)^{1/n} >1
\end{equation}
\item[{\rm{(iii)}}] At any local maximum, $\ti z$, of $\abs{\ti T_n(z)}$ in some $I_j$, we have
\begin{equation} \lb{8.13}
\abs{\ti T_n(\ti z)} = \|\ti T_n\|_E
\end{equation}
\end{SL}
\end{theorem}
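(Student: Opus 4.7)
For (i), I would argue by a one-step perturbation using Lemma~\ref{L8.4}. Assume for contradiction that $\ti T_n$ has two zeros $\zeta_1,\zeta_2\in G_j$; the sub-arc $I\subset G_j$ joining them is compactly contained in the open gap, so $E\subset\partial\bbD\setminus I$. Choosing $\varphi<0$ small enough that the moved points $\zeta_1 e^{i\varphi},\zeta_2 e^{-i\varphi}$ stay in $G_j$, Lemma~\ref{L8.4} yields the strict inequality $|(z-\zeta_1 e^{i\varphi})(z-\zeta_2 e^{-i\varphi})|<|(z-\zeta_1)(z-\zeta_2)|$ on $E$. Substituting this factor into $\ti T_n$ gives an element of $\calP_n$ with strictly smaller sup norm on $E$, contradicting minimality. (The case $\zeta_1=\zeta_2$ is handled identically, with the perturbation splitting the double zero.)

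For (ii), I would factor $\ti T_n(z)=(z-z_j^{(n)})P_{n-1}(z)$ with $P_{n-1}\in\calP_{n-1}$, so the expression in \eqref{8.12} is $|P_{n-1}(z)/\|\ti T_n\|_E|^{1/n}$, with no singularity at $z=z_j^{(n)}$. Fekete's theorem gives $\|\ti T_n\|_E^{1/n}\to C(E)$, and part (i) implies that at most $\ell-1$ zeros of $P_{n-1}$ lie off $E$. Combined with $n$-th root extremality, the standard principle of descent / upper envelope theorem (Stahl--Totik, Blatt--Saff--Simkani) forces the normalized zero counting measures of $P_{n-1}$ to converge weakly to the equilibrium measure $d\rho_E$. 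Since $\log|z-w|$ is bounded continuous in $w\in E$ for $z$ in a compact $K\subset G_j$, uniformly in $z\in K$, we obtain
\[
\f{1}{n}\log|P_{n-1}(z)| \to \int\log|z-w|\,d\rho_E(w) = \log C(E) + g_E(z)
\]
uniformly on $K$, where $g_E$ is the Green's function of $\bbC\setminus E$ with pole at infinity. Subtracting $\f{1}{n}\log\|\ti T_n\|_E\to\log C(E)$ shows that the quantity in \eqref{8.12} tends uniformly on $K$ to $e^{g_E(z)}$, which is strictly greater than $1$ on $G_j\subset\bbC\setminus E$.

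For (iii), I would again use Lemma~\ref{L8.4} in a perturbation argument. Suppose $\ti z\in I_j$ is a local maximum of $|\ti T_n|$ with $V:=|\ti T_n(\ti z)|<M:=\|\ti T_n\|_E$. By \eqref{8.10x}, $F(\theta):=e^{i\eta}e^{-in\theta/2}\ti T_n(e^{i\theta})$ is a real trigonometric polynomial of degree $n/2$; $F$ has $n$ zeros on $\partial\bbD$ and $F'$ has at most $n$, so Rolle forces exactly one critical point of $F$ in each arc between consecutive zeros. Hence $\ti z$ lies in a unique arc $I=(\zeta_1,\zeta_2)\subset I_j$ bounded by consecutive zeros of $\ti T_n$, with $|\ti T_n|\leq V$ throughout $I$. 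Apply Lemma~\ref{L8.4} with $\varphi<0$ so small that $\zeta_1 e^{i\varphi},\zeta_2 e^{-i\varphi}$ remain in $I_j$: the perturbed polynomial $\ti T_n'\in\calP_n$ satisfies $|\ti T_n'|<|\ti T_n|\leq M$ strictly on $E\setminus I$ (which contains the global-max set since $|\ti T_n(\ti z)|<M$), while $|\ti T_n'|\leq V(1+O(\varphi))<M$ on $I\cap E$ by the slack $V<M$. Thus $\|\ti T_n'\|_E<M$, contradicting extremality.

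The main obstacle is the potential-theoretic input in (ii): upgrading $\|\ti T_n\|_E^{1/n}\to C(E)$ together with the off-$E$ zero-count bound from (i) into the actual weak convergence of zero distributions to $d\rho_E$. Parts (i) and (iii) are self-contained consequences of Lemma~\ref{L8.4} plus elementary Rolle-type counting, whereas (ii) requires invoking external machinery on zero distributions of asymptotically extremal polynomials.
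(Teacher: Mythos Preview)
Your proposal is correct and follows essentially the same route as the paper. For (i) and (iii) you use exactly the paper's perturbation argument via Lemma~\ref{L8.4} (the paper phrases it as ``symmetrically move the zeros apart''), and for (ii) you invoke the same potential-theoretic input---weak convergence of the zero counting measures of the $\ti T_n$ to $d\rho_E$---that the paper cites from \cite{AB,ST,EqMC}, arriving at the identical uniform limit $e^{g_E(z)}=\exp(-\Phi_{\rho_E}(z))$ on $K$. One small slip: in (iii) you assert the arc $I=(\zeta_1,\zeta_2)$ between consecutive zeros is contained in $I_j$, but a neighboring zero may lie in an adjacent gap; this does not matter, since your actual estimate only uses $\abs{\ti T_n}\leq V$ on all of $I$ (which follows from the alternation you established) and the decomposition $E=(E\setminus I)\cup(E\cap I)$, exactly as the paper does.
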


\begin{proof} (i) If there are two zeros in some $G_j$, we can symmetrically move the zeros apart.
Doing that increases $\ti T_n$ on $G_j$ which is disjoint from $E$, but it decreases $\|\ti T_n\|_E$,
contradicting the minimizing definition.

\smallskip
(ii) The zero counting measure for $\ti T_n$ converges to the equilibrium measure on $E$. For
standard $T_n$'s and $E\subset\bbR$, this result is proven in \cite{AB,ST,EqMC}. A small change
implies this result for $\ti T_n$. This, in turn, says that uniformly on $G_j$,
\begin{equation} \lb{8.14}
\biggl| \f{\ti T_n(z)}{(z-z_j^{(n)})\|\ti T_n\|_E} \biggr|^{1/n} \to
\exp (-\Phi_{\rho_E}(z))
\end{equation}
which implies \eqref{8.12}.

\smallskip
(iii) Since $z^{-n/2}\ti T_n(z)$ is real up to a phase, the local maxima of $\abs{\ti T_n(z)}$ on
$\partial\bbD$ alternate with the zeros of $\ti T_n(z)$. If a local maximum is smaller than
$\|\ti T_n\|_E$, we move the nearest zeros, say $z_0,z_1$, apart. That decreases
$\|\ti T_n\|_{E\setminus (z_0,z_1)}$ and increases $\|\ti T_n\|_{(z_0,z_1)}$. Since the latter
is assumed smaller than $\|\ti T_n\|_E$, it decreases $\|\ti T_n\|_E$ overall, violating the
minimizing definition. Thus, $\abs{\ti T_n(z)}\geq \|\ti T_n\|_E$. But since $\ti z\in E$,
$\abs{\ti T_n(z)} \leq \|\ti T_n\|_E$.
\end{proof}

Now define
\begin{equation} \lb{8.15}
\Delta_n(z) = \f{2e^{i\varphi_n} \ti T_{2n}(z)}{\|\ti T_{2n}\|_E}
\end{equation}
where $\varphi_n$ is chosen to make $\Delta_n$ real on $\partial\bbD$. By \eqref{8.13}, maxima
in $E$ occur with $\Delta_n(z) =\pm 2$, and by \eqref{8.12}, maxima in $\partial\bbD\setminus E$
occur at points where $\abs{\Delta_n(z)} >2$. Thus, up to a phase, $\Delta_n(z)$ looks like a
discriminant. So, by Theorem~11.4.5 of \cite{OPUC2}, $E_n\equiv\Delta_n^{-1} ([-2,2])$ is
the essential spectrum of a CMV matrix whose Verblunsky coefficients obey $\alpha_{m+p}=\lambda
\alpha_m$ for $\abs{\lambda}=1$.

\begin{proof}[Proof of Theorem~\ref{T8.3}] $E_n$ has at most $2\ell$ components,
$\ell$ containing $I_1, \dots, I_\ell$ (call them $I_1^{(n)}, \dots, I_\ell^{(n)}$),
and $\ell$ possible components, $J_1^{(n)}, \dots, J_\ell^{(n)}$, one in each gap.
Since capacities are bounded by $\f14$ times Lebesgue measure, it suffices to show that
\[
\sum_{j=1}^\ell \, \abs{I_j^{(n)}\setminus I_j} + \abs{J_j^{(n)}}\to 0
\]
to prove \eqref{8.5} and complete the proof. Since, on $\partial\bbD$,
\begin{equation} \lb{8.16}
-\Phi_{\rho_E}(x) \geq c\, \dist (x,E)^{1/2}
\end{equation}
by \eqref{8.14}, we have $\abs{I_j^{(n)}\setminus I_j}\to 0$ and $\abs{J_j^{(n)}}\to 0$.
\end{proof}

%%%%%%%%%%%%%%%%%%%%%%%%%%%%%%%%%%%%%%%%%%%%%%%%%%%%%%%%%%%%%%
\section{Theorems for Subsets of $\partial\bbD$} \lb{s9}
%%%%%%%%%%%%%%%%%%%%%%%%%%%%%%%%%%%%%%%%%%%%%%%%%%%%%%%%%%%%%%

Given Theorem~\ref{T8.1} and our strategies in Sections~\ref{s3}--\ref{s6}, we
immediately have

\begin{theorem}\lb{T9.1} Let $E\subset\partial\bbD$ with $\partial E=E\setminus E^\intt$
{\rm{(}}$E^\intt$ means interior in $\partial\bbD${\rm{)}} having capacity zero. Let $d\mu$
be a measure on $\partial\bbD$ with $\sigma_\ess (d\mu)=E$ and
\begin{equation} \lb{9.1}
d\mu = f(x)\, d\rho_E + d\mu_\s
\end{equation}
where $d\mu_s$ is $d\rho_E$-singular. Suppose $f(x) >0$ for $d\rho_E$-a.e.\ $x$. Then
$d\mu$ is regular.
\end{theorem}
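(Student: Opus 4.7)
The plan is to follow the strategy of Theorem~\ref{T5.1} essentially verbatim, with Theorem~\ref{T8.1} replacing Totik's OPRL bound (Theorem~\ref{T1.7}) and Theorem~\ref{T2.7} replacing Theorem~\ref{T2.4}. The hypothesis $f>0$ for $d\rho_E$-a.e.\ $x$ guarantees that $\mu$ is not supported on a set of capacity zero, so by Theorem~\ref{T1.8} it suffices to prove that the POPUC zero counting measures $d\nu_n$ converge weakly to $d\rho_E$. By Theorem~\ref{T2.7} this is equivalent to showing that
\[
d\eta_n := \f{1}{n+1}\, K_n(e^{i\theta},e^{i\theta})\, d\mu \overset{w}{\longrightarrow} d\rho_E.
\]

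By compactness of probability measures on $\partial\bbD$, it is enough to show that every weak subsequential limit equals $d\rho_E$. Given a subsequence $n(j)$ along which convergence holds, split via \eqref{9.1} into $d\eta_{n(j)} = d\eta^{(1)}_{n(j)} + d\eta^{(2)}_{n(j)}$, where $d\eta^{(1)}_n = \f{1}{n+1} K_n(x,x)\, f(x)\, d\rho_E$ and $d\eta^{(2)}_n = \f{1}{n+1} K_n(x,x)\, d\mu_\s$, and pass to a further subsequence so both parts converge separately to limits $d\nu_1, d\nu_2$. On $E^{\intt}$ we have $d\rho_E \restriction E^{\intt} = \rho_E(\theta)\, dm$, so $d\mu \restriction E^{\intt}$ has $dm$-density $w(\theta) = f(\theta)\rho_E(\theta)$. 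By Theorem~\ref{T8.1},
\[
\limsup\, n\lambda_{n-1}(e^{i\theta}) \leq \f{w(\theta)}{\rho_E(\theta)} = f(\theta)
\]
for $dm$-a.e.\ (hence $d\rho_E$-a.e.) $\theta \in E^{\intt}$, which by \eqref{1.10} gives $\liminf \f{1}{n+1} K_n(e^{i\theta},e^{i\theta})\, f(\theta) \geq 1$ $d\rho_E$-a.e.\ on $E^{\intt}$.

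Now for any continuous $g \geq 0$ on $\partial\bbD$, weak convergence plus Fatou yields
\[
\int g\, d\nu_1 = \lim_j \int g(x)\, \f{1}{n(j)+1} K_{n(j)}(x,x)\, f(x)\, d\rho_E(x) \geq \int g(x)\, \liminf_j \Bigl[\f{1}{n(j)+1} K_{n(j)}(x,x)\, f(x)\Bigr]\, d\rho_E \geq \int g\, d\rho_E,
\]
where we use that $d\rho_E(\partial E)=0$ since $\partial E$ has capacity zero. Hence $d\nu_1 \geq d\rho_E$ as measures on $\partial\bbD$. Since $d\nu_1 + d\nu_2$ is a probability measure and $d\rho_E$ is a probability measure, the inequality forces $d\nu_1 = d\rho_E$ and $d\nu_2 = 0$. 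Thus every subsequential weak limit of $d\eta_n$ is $d\rho_E$, giving the desired convergence and proving regularity of $\mu$.

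Because the scheme mirrors Section~\ref{s5}, I do not anticipate a serious obstacle once Theorem~\ref{T8.1} is in hand; the only point requiring minor care is the bookkeeping for the boundary: $d\rho_E$ lives on $E$ but charges only $E^{\intt}$ (by the capacity-zero hypothesis on $\partial E$), so the Fatou estimate derived on $E^{\intt}$ upgrades automatically to the inequality $d\nu_1 \geq d\rho_E$ on all of $\partial\bbD$, and the mass comparison then closes the argument.
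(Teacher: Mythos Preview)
Your proposal is correct and matches the paper's approach exactly: the paper does not give a separate proof of Theorem~\ref{T9.1} but simply states that it follows from Theorem~\ref{T8.1} combined with the strategy of Sections~\ref{s3}--\ref{s6}, which is precisely the argument you have written out---mirror the proof of Theorem~\ref{T5.1}, replacing Theorem~\ref{T1.7} by Theorem~\ref{T8.1} and Theorem~\ref{T2.4} by Theorem~\ref{T2.7}. Your handling of the capacity-zero boundary (so that the Fatou estimate on $E^{\intt}$ suffices) is the one genuine point of care, and you have it right.
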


\begin{theorem}\lb{T9.2} Let $I\subset E\subset\partial\bbD$ with $I$ a nonempty closed
interval and $E$ closed. Let $d\mu$ be a measure on $\partial\bbD$ so $\sigma_\ess (d\mu)
=E$ and $\mu$ is regular for $E$. Suppose
\begin{equation} \lb{9.2}
d\mu = w(\theta)\, \f{d\theta}{2\pi} + d\mu_\s
\end{equation}
and $w(\theta)>0$ for a.e.\ $e^{i\theta}\in I$. Then
\begin{alignat*}{2}
&\text{\rm{(i)}} \qquad && \f{1}{n+1}\, K_n (e^{i\theta},e^{i\theta}) \, d\mu_\s (\theta)
\overset{w}{\longrightarrow} 0 \\
&\text{\rm{(ii)}} \qquad && \int_I \, \biggl| \rho_E(\theta) - \f{1}{n+1}\, w(\theta) K_n
(e^{i\theta},e^{i\theta})\biggr|\, \f{d\theta}{2\pi} \to 0
\end{alignat*}
where $d\rho(\theta) =\rho_E(\theta)\f{d\theta}{2\pi}$ on $I$.
\end{theorem}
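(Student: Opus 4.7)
The plan is to execute the same strategy used in Section~\ref{s4} for Theorem~\ref{T1.4} and in Section~\ref{s6} for Theorem~\ref{T6.1}, but now on $\partial\bbD$ for general closed $E$, feeding in Theorem~\ref{T8.1} in place of the M\'at\'e--Nevai bound (Theorem~\ref{T1.6}).

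First I would convert Theorem~\ref{T8.1} into a pointwise liminf lower bound on the CD kernel. Since $\lambda_{n-1}(z)=K_{n-1}(z,z)^{-1}$ and $w(\theta)>0$ for a.e.\ $e^{i\theta}\in I$, \eqref{8.3} yields
\[
\liminf_{n\to\infty}\, \f{w(\theta)}{n+1}\, K_n(e^{i\theta},e^{i\theta}) \geq \rho_E(\theta) \qquad \text{for a.e.\ } e^{i\theta}\in I.
\]
Next, by weak compactness of probability measures, pass to a subsequence $n(j)\to\infty$ along which
\[
\f{w(\theta)}{n(j)+1}\, K_{n(j)}(e^{i\theta},e^{i\theta})\, \f{d\theta}{2\pi} \overset{w}{\longrightarrow} d\nu_1, \qquad \f{1}{n(j)+1}\, K_{n(j)}(e^{i\theta},e^{i\theta})\, d\mu_\s \overset{w}{\longrightarrow} d\nu_2.
\]

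Two facts then identify $d\nu_1,d\nu_2$ on $I$. Fatou's lemma together with the liminf bound gives $d\nu_1 \restriction I \geq \rho_E(\theta)\, \tfrac{d\theta}{2\pi}$. On the other hand, regularity of $\mu$ combined with Theorem~\ref{T1.8} gives $d\nu_n \overset{w}{\longrightarrow} d\rho_E$, and then Theorem~\ref{T2.7} (the OPUC form of Theorem~\ref{T1.5}) yields $\f{1}{n+1} K_n\, d\mu \overset{w}{\longrightarrow} d\rho_E$ globally, so $d\nu_1+d\nu_2=d\rho_E$. Since $d\rho_E\restriction I=\rho_E(\theta)\tfrac{d\theta}{2\pi}$ and $d\nu_2\geq 0$, we are forced into
\[
d\nu_1\restriction I = \rho_E(\theta)\, \tfrac{d\theta}{2\pi}, \qquad d\nu_2\restriction I = 0.
\]
Since the subsequence was arbitrary, (i) follows.

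For (ii), testing the convergence of $d\nu_1\restriction I$ against the indicator of $I$ (handled, as in Section~\ref{s4}, by interior subintervals together with the trivial bound $d\nu_1\restriction I\leq d\rho_E\restriction I$ on boundary slivers) gives $\int_I \f{w(\theta)}{n+1} K_n(e^{i\theta},e^{i\theta})\, \f{d\theta}{2\pi} \to \int_I \rho_E(\theta)\, \f{d\theta}{2\pi}$. The liminf bound together with the domination $[\rho_E(\theta)-\f{w(\theta)}{n+1}K_n(e^{i\theta},e^{i\theta})]_+ \leq \rho_E(\theta)$ and dominated convergence yields $\int_I [\rho_E-\f{wK_n}{n+1}]_+\f{d\theta}{2\pi}\to 0$. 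Writing $|a-b|=(a-b)+2(b-a)_+$ and combining then gives (ii). The main technical work is already packaged into Theorem~\ref{T8.1}; once that and regularity are in hand, the assembly above is a mechanical transcription of the argument in Section~\ref{s6}, so the only potential snag is bookkeeping at the endpoints of the closed interval $I$, which is handled by the same device as in the proof of Theorem~\ref{T1.4}.
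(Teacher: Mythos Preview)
Your proposal is correct and is precisely the approach the paper has in mind: the paper gives no detailed proof of Theorem~\ref{T9.2} at all, but simply says that given Theorem~\ref{T8.1} and the strategies of Sections~\ref{s3}--\ref{s6} the result follows immediately, and you have faithfully transcribed that template (Fatou from the Totik-type bound plus regularity via Theorems~\ref{T1.8} and \ref{T2.7} to pin down $d\nu_1,d\nu_2$ on $I$, then the $[\,\cdot\,]_+$ / integral-convergence split for (ii)). Your extra care with endpoints and the dominated-convergence justification for the negative part only makes explicit what Section~\ref{s4} left implicit.
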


\bigskip
%%%%%%%%%%%%%%%%%%%%%%%%%%%%%

\end{document}